 \newtheorem{theorem}{Theorem}[section]
 \newtheorem{definition}[theorem]{Definition}
 \newtheorem{lemma}[theorem]{Lemma}
 \newtheorem{remark}[theorem]{Remark}
 \newtheorem{cor}[theorem]{Corollary}
 \newtheorem{pro}[theorem]{Proposition}
 \newcommand{\B}{\dot{B}^{-\frac{1}{4}}_{4,\infty}}
 \newcommand{\Bp}{\dot{B}^{s_p}_{p,\infty}}
\title{ Existence and Weak* Stability for the Navier-Stokes System with Initial Values in Critical Besov Spaces}
\author{T Barker
  \footnote{OxPDE, Mathematical Institute, University of Oxford, Oxford, UK. \\Email address: \texttt{tobiasbarker5@gmail.com}; }}
\date{ \today}
\begin{document}
\maketitle 

\begin{abstract}
In 2016, Seregin and \u Sver\'ak, conceived a notion of global in time solution (as well as proving existence of them) to the three dimensional Navier-Stokes equation with $L_3$ solenoidal initial data called 'global $L_3$ solutions'. A key feature of global $L_3$ solutions is continuity with respect to weak convergence of a sequence of solenoidal $L_3$ initial data.
The first aim of this paper is to show that a similar notion of ' global $\B$ solutions'  exists for solenoidal initial data in the wider critical space $\B$ and satisfies certain continuity properties with respect to weak* convergence of a sequence of solenoidal $\B$ initial data. This is the widest such critical space if one requires the solution to the Navier-Stokes equations minus the caloric extension of the initial data to be in the global energy class.

For the case of initial values in the wider class of  $\dot{B}^{-1+\frac{3}{p}}_{p,\infty}$ initial data ($p>4)$, we prove that for any $0<T<\infty$ there exists a solution to the Navier-Stokes system on $\mathbb{R}^3 \times ]0,T[$ with this initial data. We discuss how properties of these solutions imply a new regularity criteria for 3D weak Leray-Hopf solutions in terms of the norm $\|v(\cdot,t)\|_{\dot{B}^{-1+\frac{3}{p}}_{p,\infty}}$ (as well as certain additional assumptions).

 The main new observation of this paper, that enables these results, regards  the decomposition of homogeneous Besov spaces $\dot{B}^{-1+\frac 3 p}_{p,\infty}$.  This does not appear to obviously follow from the known real interpolation theory. 
\end{abstract}

\setcounter{equation}{0}

\section{Introduction}
In this paper, we consider the Cauchy problem for the Navier-Stokes system in the space-time domain $Q_S=\mathbb R^3\times ]0,S[$ ($0<S\leq \infty$) for  the vector-valued function $v=(v_1,v_2,v_3)=(v_i)$ and scalar function $q$, satisfying the equations
\begin{equation}\label{directsystem}
\partial_tv+v\cdot\nabla v-\Delta v=-\nabla q,\qquad\mbox{div}\,v=0
\end{equation}
in $Q_S$,
the boundary conditions
\begin{equation}\label{directbc}
v(x,t)\to 0\end{equation}
as $|x|\to\infty$ for all  $t\in  [0,S[$,
and the initial conditions
\begin{equation}\label{directic}
v(\cdot,0)=u_0(\cdot)
\end{equation}
In the recent paper \cite{sersve2016}, a notion of global in time solution to the Navier-Stokes equation was developed with $L_3$ solenoidal initial data called 'global $L_3$ solutions'.  A key feature of global $L_{3}$ solutions is as follows. Namely, if  $u^{(n)}$ are  global $L_{3}$ solutions corresponding the the initial datum $u^{(n)}_0$, and $u^{(n)}_0$ converge weakly in $L_{3}(\mathbb{R}^3)$ to $u_0$, then a suitable subsequence of $u^{(n)}$ converges to global $L_{3}$ solution $u$ corresponding to the initial condition $u_0$.
To explain the notation of global weak $L_3$-solutions in \cite{sersve2016} further, we introduce the notation $$S(t)u_0(x)=\int\limits_{\mathbb R^3}\Gamma(x-y,t)u_0(y)dy,
$$
where $\Gamma$ is the three dimensional heat kernel. Throughout this paper we will often write $V(x,t):=S(t)u_0(x)$. In \cite{sersve2016}, any global weak $L_3$-solution of the Navier-Stokes equation, with initial data $u_0\in L_{3}(\mathbb{R}^3)$, has the following structure. Namely,
\begin{equation}\label{globalL3solution}
v(x,t)= V(x,t)+u(x,t),
\end{equation}
where $u$ is globally in the energy class in $\mathbb{R}^3 \times ]0,T[$ for any  finite $T>0$ and satisfies the global energy inequality 

\begin{equation}\label{globalL3energyinequality}
\|u(\cdot,t)\|_{L_{2}}^2+2\int\limits_{0}^t\int\limits_{\mathbb R^3} |\nabla u(x,t')|^2 dxdt'\leq 2 \int_{0}^t\int_{\mathbb R^3}(V\otimes u+V\otimes V):\nabla udxdt'.
\end{equation}
In \cite{sersve2016}, the crucial estimate 
\begin{equation}\label{globalL3pertrubedestimate}
\|u(\cdot,t)\|_{L_{2}}^2+2\int\limits_{0}^t\int\limits_{\mathbb R^3} |\nabla u(x,t')|^2 dxdt'\leq t^{\frac{1}{2}} C(\|u_0\|_{L_{3}(\mathbb{R}^3)}).
\end{equation} 
is proven for $u$, with $u$ and $V$ as in (\ref{globalL3solution})-(\ref{globalL3energyinequality}). This estimate plays a central role in \cite{sersve2016} in the proof of continuity of global weak $L_3$ solutions, with respect to the weak convergence of the initial data. 

A natural question concerns whether an analogous notion of  of global in time solutions (which we will refer to as 'global $X$ solutions' or sometimes $\mathcal{N}(X)$)  with initial data $u_0$ in  other critical\footnote{ We say $X$ is a critical space, if $u_0 \in X \Rightarrow \lambda u_0(\lambda\cdot) \in X$ and $\|u_{0\lambda}\|_{X}=\|u_0\|_{X}$} spaces  $X$ , that satisfy the properties

\begin{itemize}
\item \textbf{1) Global existence} for any $u_0\in X$ there exists a global in time solution in $\mathcal{N}(X)$; 
 
\item \textbf{2) Weak* stability} $$u_0^{(k)}\stackrel{*}{\rightharpoonup} u_0\,\,\,\,\,\textrm{in}\,\,X\Rightarrow $$
   $ u^{(k)}(\cdot, u_{0}^{(k)})\in \mathcal{N}(X)$ converges up to subsequence (in sense of distributions) to $u(\cdot, u_0)\in \mathcal{N}(X);$
 
 \end{itemize}

 As mentioned in \cite{barkerser16}, such properties are useful if one wants to show that critical norms of the Navier-Stokes equations tend to infinity at a potential blow up time. 
 Let us now describe this in more detail. Suppose  one wanted to prove that if $v$ is a weak Leray-Hopf solution on $\mathbb{R}^3\times ]0,\infty[$, with sufficiently regular initial data as well as a finite blow up time $T$, then necessarily
 \begin{equation}\label{necessarilylorentzhalf}
 \lim\limits_{t\uparrow T}\|v(\cdot,t)\|_{X}=\infty.
 \end{equation}
 Where $X$ is a critical space.
 One such strategy, given in \cite{Ser10} and subsequently used in \cite{Ser12},\cite{barkerser16blowup} and \cite{dallas}, for showing this is to assume for contradiction  that there exists $t_n\uparrow T$ with
\begin{equation}\label{contradictionassumption}
M:=\sup\limits_{n}\|v(\cdot,t_n)\|_{X}<\infty.
\end{equation}
The next step is to perform the rescaling
 \begin{equation}
 u^{(n)}(y,s)=\lambda_nv(x,t), \quad
p^{(n)}(y,s)=\lambda^2_nq(x,t),\,\,\,\,  u^{(n)}_0(y)=\lambda_nv(\lambda_ny,t_n),
\end{equation}
\begin{equation}x=\lambda_ny,\qquad t=T+\lambda^2_ns,\qquad \lambda_n=\sqrt{\frac {T-t_n}2}
\end{equation}
This gives that $(u^{(n)},p^{(n)})$ are solutions to the Navier-Stokes equations on $\mathbb{R}^3\times ]-2,0[$ with 
\begin{equation}\label{boundedXdata}
\sup\limits_n\|u^{(n)}_0\|_{X}=\|v(\cdot,t_n)\|_{X}=
M.
\end{equation}
The final part of the strategy is to obtain a non-trivial ancient solutions to the Navier-Stokes equations and to then attempt to apply a Liouville type theorem to it, based on backward uniqueness and unique continuation for parabolic operators developed in \cite{ESS2003}. The motivation for considering 'global $X$ solutions', with properties \textbf{1) Global existence- 2) Weak* stability}, is that if such solutions exist then they are particularly useful in obtaining a non-trivial ancient solution in the above strategy. 

Unfortunately, the method for proving the existence of global $L_{3}$ solutions, with properties \textbf{1) Global Existence- 2) Weak* stability}, breaks down for $L^{3,\infty}$ initial data. In \cite{barkerser16}-\cite{barkersersverak16}, this difficulty was overcome. In particular in \cite{barkerser16}- \cite{barkersersverak16}, a notion of 'global weak $L^{3,\infty}$ solutions', satisfying \textbf{1) Global existence - 2) Weak* stability} and having the structure (\ref{globalL3solution})-(\ref{globalL3energyinequality}), was conceived. In \cite{barkerser16}-\cite{barkersersverak16}  the key is establishing
that for any global $L^{3,\infty}$ solution $v$, with  solenoidal initial data $u_0 \in L^{3,\infty}(\mathbb{R}^3)$, we have
\begin{equation}\label{globalweakL3pertrubedestimate}
\|v(\cdot,t)-S(t)u_0\|_{L_{2}}^2+2\int\limits_{0}^t\int\limits_{\mathbb R^3} |\nabla v(x,t')-S(t)u_0|^2 dxdt'\leq t^{\frac{1}{2}} C(\|u_0\|_{L^{3,\infty}(\mathbb{R}^3)}).
\end{equation}
A key part in showing this uses  that for any $N>0$ any divergence free function $u_0$ in $L^{3,\infty}(\mathbb{R}^3)$ can be decomposed into two divergence free pieces $\bar{u_0^N}$ and $\widetilde{u_0^N}$ satisfying
\begin{equation}\label{widetildeL2est}
\|\widetilde{u_0^N}\|_{L_{2}}^2\leq CN^{-1} \|u_0\|_{L_{3,\infty}}^3
\end{equation} 
and
\begin{equation}\label{barLp}
\|\bar{u_0^N}\|_{L_{p}}^p\leq CN^{p-3} \|u_0\|_{L_{3,\infty}}^3
\end{equation}
for any $3<p$. This, together with appropriate decompositions of the Navier-Stokes equations (inspired by \cite{Calderon90}), imply (\ref{globalweakL3pertrubedestimate}).

Suppose one attempts constructs a global $X$ solution to the Navier-Stokes equation, with $X$ being a critical space, having the structure (\ref{globalL3solution})-(\ref{globalL3energyinequality}). To ensure the finiteness of the right hand side of the energy inequality (\ref{globalL3energyinequality}), one should have that for any $u_0\in X$ that $$S(t)u_0 \in L_{4,loc}(0,\infty; L_{4}(\mathbb{R}^3)).$$ Hence, it is natural to consider $X$ such that for any $0<T<\infty$ and $0<\epsilon<T$:
\begin{equation}\label{L4semigroup}
\int\limits_{\epsilon}^T \|S(t)u_0\|_{L_{4}} dt\leq c(T,\epsilon) \|u_0\|_{X}.
\end{equation}
This implies that
\begin{equation}\label{L4t=1embedding}
\|S(1)u_0\|_{L_{4}}\leq c\|u_0\|_{X}.
\end{equation}
For $\lambda>0$ and $u_{0\lambda}(x):=\lambda u_{0}(\lambda x)$, it is clear that $S(1)u_{0\lambda}(x)= \lambda S(\lambda^2)u_0(\lambda x)$.
Hence, (\ref{L4t=1embedding}) implies that for any $\lambda>0$
\begin{equation}\label{optimalbesov}
\lambda^{\frac 1 4} \|S(\lambda^2)u_0\|_{L_{4}}= \|S(1)u_{0\lambda}\|_{L_{4}}\leq C\|u_{0\lambda}\|_{X}=C\|u_{0}\|_{X}.
\end{equation}
Hence,
\begin{equation}\label{optimalBesovembedding}
\sup_{s>0} s^{\frac 1 8}\|S(s)u_0\|_{L_{4}}\leq C\|u_{0}\|_{X}.
\end{equation}
In particular $X\hookrightarrow \B(\mathbb{R}^3)$. Thus, if one seeks a global $X$ solution, satisfying the requirements \textbf{1) Global existence- 2) Weak* Stability} and having the structure (\ref{globalL3solution})-(\ref{globalL3energyinequality}), $X=\B(\mathbb{R}^3)$ is the widest critical space for such a possibility.

 
 The aim of this paper is to show that, for any divergence free initial data in  $\B$,
  there exists 'global $\B$ solution' to the Navier-Stokes equations (\ref{directsystem})-(\ref{directic}), which satisfies the requirements \textbf{1) Global existence- 2) Weak* Stability}. 
  To the best of the author's knowledge, there is currently no notion of solutions to the Navier-Stokes equations with  arbitrary $\B$  solenodial initial data. Note that under certain smallness conditions on the $\B$ data, solutions were constructed by means of the Banach contraction principle, in \cite{Plan1996} and \cite{cannone1997}.  Let us mention that the recent preprint \cite{bradshawtsai} implies that if  $u_0 \in \B$ is discretely self similar \footnote{ the discretely self similar solutions in \cite{bradshawtsai} are shown to exist for any discretely self similar data in $\dot{B}^{-1+\frac 3 p}_{p,\infty}(\mathbb{R}^3)$ with $3<p<6$.}, then there exists a discretely  self similar solution to the Navier-Stokes equation. These solutions will also belong to our class of global $\B$ solutions.   
 
 

 Before giving the definition of 'global $\B(\mathbb{R}^3)$ solutions', we provide some relevant definitions and notation: 

$J$ and $\stackrel{\circ}J{^1_2}$ are the completion of the space
$$C^\infty_{0,0}(\mathbb R^3):=\{v\in C^\infty_0(\mathbb R^3):\,\,{\rm div}\,v=0\}$$
with respect to $L_2$-norm and the Dirichlet integral
$$\Big(\int\limits_{\mathbb R^3} |\nabla v|^2dx\Big)^\frac 12,$$
 correspondingly. Additionally, we define the space-time domains $Q_T:=\mathbb R^3\times ]0,T[$ and $Q_\infty:=\mathbb R^3\times ]0,\infty[$.
 
  For arbitrary vectors $a=(a_{i}),\,b=(b_{i})$ in $\mathbb{R}^{n}$ and for arbitrary matrices $F=(F_{ij}),\,G=(G_{ij})$ in $\mathbb{M}^{n}$ we put
 $$a\cdot b=a_{i}b_{i},\,|a|=\sqrt{a\cdot a},$$
 $$a\otimes b=(a_{i}b_{j})\in \mathbb{M}^{n},$$
 $$FG=(F_{ik}G_{kj})\in \mathbb{M}^{n}\!,\,\,F^{T}=(F_{ji})\in \mathbb{M}^{n}\!,$$
 $$F:G=
 F_{ij}G_{ij},\,|F|=\sqrt{F:F}.$$

\begin{definition}\label{globalBesovinf}

 We say that  $v$ 
 is a  weak $\dot{B}^{-\frac 1 4}_{4,\infty}$-solution to Navier-Stokes IVP in $Q_T$ (with $0<T<\infty$) 
  if 
\begin{equation}\label{weaksolutionsplitting}
v=V+u,
\end{equation}
with $u\in L_{\infty}(0,T; J)\cap L_{2}(0,T;\stackrel{\circ} J{^1_2})$  and there exists an $\alpha>0$ such that  
\begin{equation}\label{perturbedspaces}
\sup_{0<t<T}\frac{\|u(\cdot,t)\|_{L_{2}}^2}{t^{\alpha}}<\infty.
\end{equation} 
Additionally, it is required that there exists a $q\in L_{\frac 32, {\rm loc}}(Q_T)$ such that $u$ and $q$  satisfy the perturbed Navier- Stokes system in the sense of distributions:
\begin{equation}\label{perturbdirectsystem}
\partial_t u+v\cdot\nabla v-\Delta u=-\nabla q,\qquad\mbox{div}\,u=0
\end{equation}
in $Q_T$.
Furthermore, it is required that for any $w\in L_{2}$:
\begin{equation}\label{vweakcontinuity}
t\rightarrow \int\limits_{\mathbb R^3} w(x)\cdot u(x,t)dx
\end{equation}
is a continuous function on  $[0,T].$
Moreover, $u$ satisfies the energy inequality:
\begin{equation}\label{venergyineq}
\|u(\cdot,t)\|_{L_{2}}^2+2\int\limits_{0}^t\int\limits_{\mathbb R^3} |\nabla u(x,t')|^2 dxdt'\leq$$$$\leq 2 \int_{0}^t\int_{\mathbb R^3}(V\otimes u+V\otimes V):\nabla udxdt'
\end{equation}
for all $t\in [0,T]$.

Finally, it is required that $v$ and $q$ satisfy the local energy inequality.
Namely, for almost every $t\in ]0,T[$ the following inequality holds for all non negative functions $\varphi\in C_{0}^{\infty}(Q_T)$:
\begin{equation}\label{localenergyinequality}
\int\limits_{\mathbb R^3}\varphi(x,t)|v(x,t)|^2dx+2\int\limits_{0}^t\int\limits_{\mathbb R^3}\int\varphi |\nabla v|^2 dxdt^{'}\leq$$$$\leq
\int\limits_{0}^{t}\int\limits_{\mathbb R^3}|v|^2(\partial_{t}\varphi+\Delta\varphi)+v\cdot\nabla\varphi(|v|^2+2q) dxdt^{'}.
\end{equation}

We define $v$ to be a global $\dot{B}^{-\frac 1 4}_{4,\infty}(\mathbb{R}^3)$ solution if it is a weak $\B$-solution in $Q_T$ for any $0<T<\infty$.
\end{definition}
\begin{remark}\label{holderuinteg}
The role of the requirement (\ref{perturbedspaces}) is to ensure that the right hand side of the energy inequality (\ref{venergyineq}) is finite. See Lemma \ref{integabilitynonlinearity} for more details. This therefore ensures that the function $u$ satisfies the initial  condition in the strong $L_2$-sense, i.e.,
$u(\cdot,t)\to 0$ in $L_2$.
\end{remark}
 Next us state the main results of this paper.

\begin{theorem}\label{weak stability}
Let 
$u_{0}^{(k)}\stackrel{*}{\rightharpoonup} u_0$ in $\dot{B}^{-\frac 1 4}_{4,\infty}$ and let $v^{(k)}$ be a sequence of a global weak $\dot{B}^{-\frac 1 4}_{4,\infty}$-solutions to the Cauchy problem for the Navier-Stokes system with initial data $u_0^{(k)}$. Then there exists a subsequence still denoted $v^{(k)}$ that converges to a global weak $\dot{B}^{-\frac 1 4}_{4,\infty}$-solution $v$ to the Cauchy problem for the Navier-Stokes system with initial data $u_0$,
in the sense of distributions.
\end{theorem}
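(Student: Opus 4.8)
**Proof proposal for Theorem (weak* stability).**

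The plan is to run a compactness argument on the perturbed solutions $u^{(k)} = v^{(k)} - V^{(k)}$, where $V^{(k)}(x,t) = S(t)u_0^{(k)}(x)$, and then pass to the limit in the weak formulation. First I would record that $u_0^{(k)} \stackrel{*}{\rightharpoonup} u_0$ in $\B$ implies $\sup_k \|u_0^{(k)}\|_{\B} =: M < \infty$ by uniform boundedness; by the embedding (\ref{optimalBesovembedding}) this gives $\sup_k \sup_{s>0} s^{1/8}\|S(s)u_0^{(k)}\|_{L_4} \leq CM$, hence $V^{(k)}$ is bounded in $L_4(\epsilon, T; L_4)$ for every $0<\epsilon<T<\infty$, and (by a routine heat-kernel estimate) $V^{(k)} \to V$ in, say, $L_{4,\mathrm{loc}}(Q_\infty)$ or at least in the sense of distributions on $Q_\infty$, with $V = S(\cdot)u_0$. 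The key quantitative input — which I expect to be supplied by a lemma earlier/later in the paper, analogous to (\ref{globalweakL3pertrubedestimate}) and resting on the advertised decomposition of $\Bp$ — is a uniform bound of the form
\begin{equation*}
\|u^{(k)}(\cdot,t)\|_{L_2}^2 + 2\int_0^t \int_{\R^3} |\nabla u^{(k)}|^2\,dx\,dt' \leq \Phi\big(t, \|u_0^{(k)}\|_{\B}\big) \leq \Phi(t, M)
\end{equation*}
for all $t \in [0,T]$, where $\Phi$ is continuous and $\Phi(t,M) \to 0$ as $t \to 0^+$. This is the main obstacle: the whole point of the critical space $\B$ is that the naive Calderón-type splitting into an $L_2$ piece and an $L_p$ piece must be replaced by the new Besov decomposition, and the energy inequality (\ref{venergyineq}) only closes once that estimate is in hand.

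Granting the uniform energy bound, the compactness step is standard. From it, $u^{(k)}$ is bounded in $L_\infty(0,T;J) \cap L_2(0,T;\stackrel{\circ}{J}{}^1_2)$; using the equation (\ref{perturbdirectsystem}) together with the $L_4$-bound on $V^{(k)}$ one bounds $\partial_t u^{(k)}$ in some negative-order space (e.g.\ $L_{4/3}(0,T; (\stackrel{\circ}{J}{}^1_2)^* + \ldots)$ after handling the pressure via the Helmholtz projection), so that Aubin–Lions gives a subsequence with $u^{(k)} \to u$ strongly in $L_2(Q_T')$ for every bounded $Q_T' \Subset Q_T$, weakly in $L_2(0,T;\stackrel{\circ}{J}{}^1_2)$, and weak-$*$ in $L_\infty(0,T;J)$. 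Consequently $v^{(k)} = V^{(k)} + u^{(k)} \to v := V + u$ in the sense of distributions, and the quadratic terms $v^{(k)} \otimes v^{(k)}$ converge (the $u\otimes u$ part by strong $L_2$-local convergence, the mixed and $V\otimes V$ parts by strong local $L_4$-convergence of $V^{(k)}$ against weak convergence of $u^{(k)}$). The pressure $q^{(k)}$ is recovered, up to harmonic ambiguity, from $v^{(k)}\otimes v^{(k)}$ via Calderón–Zygmund, giving $q^{(k)} \to q$ in $L_{3/2,\mathrm{loc}}$.

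It remains to check that the limit $v$ satisfies every clause of Definition \ref{globalBesovinf} on each $Q_T$. The splitting $v = V + u$ with $u$ in the stated energy class holds by construction; the bound (\ref{perturbedspaces}) with some $\alpha>0$ is inherited from the uniform estimate $\|u^{(k)}(\cdot,t)\|_{L_2}^2 \leq \Phi(t,M) \lesssim t^{\alpha}$ passed to the limit (using weak lower semicontinuity of the $L_2$-norm at a.e.\ $t$, then upgrading to all $t$ via the weak continuity clause (\ref{vweakcontinuity})). The distributional identity (\ref{perturbdirectsystem}) passes to the limit by the convergences just established. The energy inequality (\ref{venergyineq}) and the local energy inequality (\ref{localenergyinequality}) pass to the limit by weak lower semicontinuity of the $L_2$- and Dirichlet-type norms on the left and the strong/weak convergence of the right-hand sides (here one also needs $\|u_0^{(k)}\|_{\B}$ not to be used on the right, only the already-converged fields, or else an extra argument that the right side is continuous under the convergence — this is where care with $V^{(k)} \to V$ in $L_4$ matters). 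Finally, the weak-in-time continuity (\ref{vweakcontinuity}) for the limit follows from equicontinuity of $t \mapsto \int w \cdot u^{(k)}$, which comes from the uniform $\partial_t u^{(k)}$ bound, plus Arzelà–Ascoli on a dense set of $w \in L_2$. Assembling these, $v$ is a global weak $\B$-solution with initial data $u_0$, completing the proof.
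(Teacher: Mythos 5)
Your proposal is correct and follows essentially the same route as the paper: the entire weight of the argument rests on the uniform decay estimate $\|u^{(k)}(\cdot,t)\|_{L_2}^2+\int_0^t\int|\nabla u^{(k)}|^2\leq C(T,M)\,t^{\beta}$, which the paper supplies as Lemma \ref{venergyest} via the Besov decomposition of Proposition \ref{besovinfinityinterpolationtheorem}, after which the compactness and limit passage are standard. The only (cosmetic) difference is that you invoke Aubin--Lions with a $\partial_t u^{(k)}$ bound, whereas the paper obtains local strong compactness by splitting $(u^{(k)},q^{(k)})$ into pieces solving Stokes problems with coercive $W^{2,1}_{s,l}$ estimates (Lemma \ref{higherderivativeandpressure}) and using the compact embedding of $W^{2,1}_{l}$ into $C([0,T];L_{l}(B(n)))$.
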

\begin{cor}\label{existenseglobal} There exists at least one global weak $\dot{B}^{-\frac 1 4}_{4,\infty}$-solution to the Cauchy problem 
(\ref{directsystem})-(\ref{directic}).
\end{cor}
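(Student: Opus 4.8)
The plan is to deduce Corollary~\ref{existenseglobal} from Theorem~\ref{weak stability}: it suffices to approximate an arbitrary solenoidal $u_0\in\B$, in the weak* topology, by initial data for which a global weak $\B$-solution is already available. So fix a solenoidal $u_0\in\B$. First I would construct a sequence $(u_0^{(k)})$ of solenoidal vector fields in $L_3(\R^3)$ with
\[
\sup_k\|u_0^{(k)}\|_{\B}\le C\|u_0\|_{\B},\qquad u_0^{(k)}\stackrel{*}{\rightharpoonup}u_0 \ \text{ in }\ \B .
\]
A convenient choice is $u_0^{(k)}=\mathbb{P}\big(\chi(\cdot/k)\,(\rho_{1/k}*u_0)\big)$, where $\rho_\varepsilon$ is a standard mollifier, $\chi\in C^\infty_0(\R^3)$ is a cut-off equal to $1$ near the origin, and $\mathbb{P}$ is the Leray--Helmholtz projection onto divergence-free fields. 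Mollification, multiplication by $\chi(\cdot/k)$ and $\mathbb{P}$ are all bounded on $\B$ with bounds independent of $k$ (the seminorms of $\chi(\cdot/k)$ only improve as $k\to\infty$, and $\mathbb{P}$ is a Fourier multiplier homogeneous of degree zero), while $u_0^{(k)}\to u_0$ in $\mathcal{S}'(\R^3)$; since $\B$ is the dual of the separable space $\dot{B}^{1/4}_{4/3,1}(\R^3)$ in which $\mathcal{S}$ is dense, boundedness in $\B$ together with testing against every Schwartz function upgrades to weak* convergence. Each $u_0^{(k)}$ is divergence-free and, being the Leray projection of a bounded, compactly supported field, belongs to $L_3(\R^3)$.

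Next I would invoke the existence theory of \cite{sersve2016}: for each $k$ there is a global $L_3$-solution $v^{(k)}$ with initial data $u_0^{(k)}$, having the structure (\ref{globalL3solution})--(\ref{globalL3energyinequality}) and obeying the bound (\ref{globalL3pertrubedestimate}). I would then verify that any such solution is, in particular, a global weak $\B$-solution in the sense of Definition~\ref{globalBesovinf}: the splitting $v^{(k)}=V^{(k)}+u^{(k)}$ with $u^{(k)}$ in the energy class is precisely (\ref{weaksolutionsplitting}); the bound (\ref{globalL3pertrubedestimate}) gives (\ref{perturbedspaces}) with $\alpha=\tfrac12$; and the perturbed system (\ref{perturbdirectsystem}), the weak continuity (\ref{vweakcontinuity}), the energy inequality (\ref{venergyineq}) and the local energy inequality (\ref{localenergyinequality}) are part of, or immediate from, the construction in \cite{sersve2016}. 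Hence $v^{(k)}\in\mathcal{N}(\B)$ for every $k$. (For discretely self-similar $u_0\in\B$ one could instead take the solutions of \cite{bradshawtsai} as approximants, but the $L_3$ route covers all of $\B$.)

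Finally, applying Theorem~\ref{weak stability} to $(v^{(k)})$ with $u_0^{(k)}\stackrel{*}{\rightharpoonup}u_0$ yields a subsequence converging in the sense of distributions to a global weak $\B$-solution $v$ with initial data $u_0$, which is exactly the claim.

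\emph{The main obstacle} is not the final passage to the limit, which Theorem~\ref{weak stability} hands us, but the two verifications above: first, producing an approximating sequence that is simultaneously inside a class with known global existence and weak* convergent in the non-separable space $\B$, which forces one to track the $\B$-boundedness of mollification and spatial truncation and to recognise weak* limits through a countable dense family of test functions; and second, confirming that the solutions of \cite{sersve2016} satisfy every clause of Definition~\ref{globalBesovinf}, especially the local energy inequality (\ref{localenergyinequality}) and the decay estimate (\ref{perturbedspaces}). Neither step is deep, but together they carry whatever content the corollary adds to Theorem~\ref{weak stability}.
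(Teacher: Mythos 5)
Your proposal is correct and follows essentially the same route as the paper: approximate $u_0$ in the weak* topology of $\B$ by solenoidal $L_3$ data, take the global $L_3$-solutions of \cite{sersve2016} (which satisfy every clause of Definition \ref{globalBesovinf}, with (\ref{perturbedspaces}) holding for $\alpha=\tfrac12$ by (\ref{globalL3pertrubedestimate})), and pass to the limit via Theorem \ref{weak stability}. The only point of divergence is the approximation step: the paper's Proposition \ref{weak*approx} uses Littlewood--Paley truncation $\sum_{|j|\le k}\dot\Delta_j u_0$ followed by a Schwartz approximation and the Leray projector, whereas you use mollification plus a spatial cut-off $\chi(\cdot/k)$. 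Your version works, but your justification that the cut-off is harmless because ``the seminorms of $\chi(\cdot/k)$ only improve'' is too quick: the uniform bound $\|\chi(\cdot/k)f\|_{\B}\le C\|f\|_{\B}$ is a genuine product estimate on a negative-regularity space, controlled by $\|\chi(\cdot/k)\|_{L_\infty}$ and $\|\chi(\cdot/k)\|_{\dot{B}^{3/4}_{4,\infty}}$, and the latter norm is scale-invariant (constant in $k$) rather than decaying; the paper's frequency-side truncation avoids having to prove any such multiplier estimate.
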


Let us state our main observation, regarding decomposition of homogeneous Besov spaces, that enables us to show the global existence of global $\B$ solutions and that the property \textbf{2) Weak* stability} holds true for them.
 Note that from this point onwards, for $p_0>3$, we will denote $$s_{p_0}:=-1+\frac{3}{p_0}<0.$$
Moreover, for $ 2<\alpha\leq 3$ and $p_{1}>\alpha$, we define $$s_{p_1,\alpha}:= -\frac{3}{\alpha}+\frac{3}{p_1}<0.$$

\begin{pro}\label{besovinfinityinterpolationtheorem}
Suppose that $3<p<\infty$, 
\begin{equation}\label{Besovassumption}
g\in \dot{B}^{s_{p}}_{p,\infty}(\mathbb{R}^3)
\end{equation}
and
 $\textrm{div}\,\,g=0$ in sense of tempered distributions. 
  Then the above assumptions imply that there exists $p<p_2<\infty$, $0<\delta_{2}<-s_{p_2}$, $\gamma_{1}:=\gamma_{1}(p)>0$ and $\gamma_{2}:=\gamma_{2}(p)>0$ such that   for any $N>0$ there exists divergence free tempered distributions 
$\bar{g}^{N}\in \dot{B}^{s_{p_2}+\delta_{2}}_{p_2,p_2}(\mathbb{R}^3)\cap \dot{B}^{s_{p}}_{p,\infty}(\mathbb{R}^3)$ and $\widetilde{g}^{N}\in L_2(\mathbb{R}^3)\cap \dot{B}^{s_{p}}_{p,\infty}(\mathbb{R}^3) $ with the following properties. Namely,
 
\begin{equation}\label{Vdecomp1}
g= \bar{g}^{N}+\widetilde{g}^{N},
\end{equation}
\begin{equation}\label{barV_1est}
\|\bar{g}^{N}\|_{\dot{B}^{s_{p_2}+\delta_{2}}_{p_2,p_2}}\leq N^{\gamma_{1}}C(p, \|g\|_{\dot{B}^{s_{p}}_{p,\infty}}),
\end{equation}
\begin{equation}\label{barV_2est}
\|\widetilde{g}^{N}\|_{L_2}\leq N^{-\gamma_{2}}C(p,\|g\|_{\dot{B}^{s_{p}}_{p,\infty}}).
\end{equation} 
Furthermore, 
\begin{equation}\label{barV_1est.1}
\|\bar{g}^{N}\|_{\dot{B}^{s_p}_{p,\infty}}\leq C(p, \|g\|_{\dot{B}^{s_{p}}_{p,\infty}}),
\end{equation}
\begin{equation}\label{barV_2est.1}
\|\widetilde{g}^{N}\|_{\dot{B}^{s_p}_{p,\infty}}\leq C(p, \|g\|_{\dot{B}^{s_{p}}_{p,\infty}}).
\end{equation} 
\end{pro}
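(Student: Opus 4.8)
The plan is to decompose $g$ using a Littlewood--Paley partition of unity, splitting the frequency annuli according to a threshold determined by $N$. Write $g = \sum_{j\in\mathbb{Z}} \Delta_j g$ where $\Delta_j$ is the usual homogeneous Littlewood--Paley projector onto frequencies $|\xi|\sim 2^j$. The hypothesis $g\in\dot{B}^{s_p}_{p,\infty}$ means $\sup_j 2^{js_p}\|\Delta_j g\|_{L_p} =: A < \infty$. Since $s_p < 0$, the low frequencies are the ``large'' part in $L_p$ and cause the failure of $L_2$-integrability, while high frequencies are controlled. I would therefore set, for a cutoff index $J=J(N)$ to be chosen,
\begin{equation}\label{propdecompplan}
\widetilde{g}^N := \sum_{j \le J} \Delta_j g, \qquad \bar g^N := \sum_{j > J} \Delta_j g.
\end{equation}
Both pieces are divergence free because $\Delta_j$ commutes with the divergence (it is a Fourier multiplier), and $g$ is divergence free; and visibly $g = \bar g^N + \widetilde g^N$. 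The two ``boundedness in $\dot B^{s_p}_{p,\infty}$'' estimates \eqref{barV_1est.1}--\eqref{barV_2est.1} are then immediate, since truncating a Littlewood--Paley series to a subset of frequencies cannot increase the $\dot B^{s_p}_{p,\infty}$ norm (each $\Delta_k$ of a truncation is either $\Delta_k g$ or $0$).

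For the remaining three estimates I would proceed as follows. \emph{The $L_2$ bound on $\widetilde g^N$:} by Bernstein's inequality, for $j\le 0$ roughly, $\|\Delta_j g\|_{L_2} \lesssim 2^{3j(1/2-1/p)}\|\Delta_j g\|_{L_p} \lesssim A\, 2^{3j(1/2-1/p)} 2^{-js_p} = A\, 2^{j(3/2 - 3/p - s_p)} = A\,2^{j(1/2)}$, using $s_p = -1+3/p$ so that $3/2-3/p-s_p = 1/2$. Hence $\|\widetilde g^N\|_{L_2} \le \sum_{j\le J}\|\Delta_j g\|_{L_2} \lesssim A \sum_{j\le J} 2^{j/2} \lesssim A\, 2^{J/2}$; here I should be slightly careful that Bernstein in the stated form is for the low-frequency regime, but since $s_p<0$ and $p>3$ the exponent $3(1/2-1/p)-s_p$ is positive and the geometric series converges regardless, after summing the (uniformly bounded in the relevant range) contributions --- in fact the cleaner route is to take $J\le 0$, which we are free to do since we only need $N$ large. \emph{The high-frequency bound on $\bar g^N$:} I want $\bar g^N\in\dot B^{s_{p_2}+\delta_2}_{p_2,p_2}$ for some $p_2>p$ and $0<\delta_2<-s_{p_2}$. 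Using Bernstein to pass from $L_p$ to $L_{p_2}$ on high frequencies, $\|\Delta_j g\|_{L_{p_2}} \lesssim 2^{3j(1/p - 1/p_2)}\|\Delta_j g\|_{L_p} \lesssim A\,2^{j(3/p-3/p_2 - s_p)}$. Note $3/p - 3/p_2 - s_p = 3/p - 3/p_2 + 1 - 3/p = 1 - 3/p_2 = -s_{p_2}$. So $\|\Delta_j g\|_{L_{p_2}} \lesssim A\,2^{-j s_{p_2}}$, i.e.\ $\bar g^N$ is uniformly in $\dot B^{s_{p_2}}_{p_2,\infty}$; I then gain the extra $\delta_2$ derivatives and upgrade $\infty$ to $p_2$ summability at the cost of a power of $2^{J}$: $\|\bar g^N\|_{\dot B^{s_{p_2}+\delta_2}_{p_2,p_2}}^{p_2} = \sum_{j>J} 2^{jp_2(s_{p_2}+\delta_2)}\|\Delta_j g\|_{L_{p_2}}^{p_2} \lesssim A^{p_2}\sum_{j>J} 2^{jp_2\delta_2}$, which converges only if we sum downward --- so this is where I must choose signs correctly: since $\delta_2>0$ the sum $\sum_{j>J}2^{jp_2\delta_2}$ diverges at $+\infty$. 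The fix is to observe that we do not need the full geometric tail: we only need it finite, which forces using $s_{p_2}+\delta_2 < 0$ together with the genuine decay of $\|\Delta_j g\|_{L_{p_2}}$ coming from a better-than-critical bound. Concretely, I would not use the crude $\dot B^{s_{p_2}}_{p_2,\infty}$ bound but rather interpolate: $\bar g^N$ sits in $\dot B^{s_p}_{p,\infty}$ (hence, via Bernstein on high frequencies, in $\dot B^{\sigma}_{p_2,\infty}$ for a range of $\sigma$), and the summability exponent $p_2$ is recovered because high-frequency truncation plus a strict loss $\delta_2 < -s_{p_2}$ makes the series $\sum_{j>J}2^{j(\text{negative})}$ converge, bounded by $2^{J\cdot(\text{negative})} = 2^{-cJ}$, times $A^{p_2}$ --- I will fix the bookkeeping so that the exponent multiplying $j$ in the convergent tail is negative, which pins down the admissible range of $\delta_2$.

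Finally I choose $J = J(N)$: taking $2^{J} \sim N^{-\theta}$ for a suitable $\theta>0$ (equivalently $J = -\theta\log_2 N$, rounded, and $\le 0$ for $N$ large) makes $\|\widetilde g^N\|_{L_2}\lesssim A\,2^{J/2} = A\,N^{-\theta/2} =: N^{-\gamma_2}C(p,A)$ and $\|\bar g^N\|_{\dot B^{s_{p_2}+\delta_2}_{p_2,p_2}}\lesssim A\,2^{-cJ} = A\,N^{c\theta} =: N^{\gamma_1}C(p,A)$, giving \eqref{barV_1est}--\eqref{barV_2est} with $\gamma_1 = c\theta$, $\gamma_2 = \theta/2$, both depending only on $p$ (through the choice of $p_2$, $\delta_2$, $c$). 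The parameters $p_2>p$ and $0<\delta_2<-s_{p_2}$ can be fixed once and for all depending on $p$ --- e.g.\ $p_2 = 2p$ and $\delta_2 = -s_{p_2}/2$ --- so all constants have the claimed dependence. The main obstacle, and the reason (as the abstract notes) this does not follow from off-the-shelf real interpolation, is that we are asking simultaneously for: (i) a \emph{parametrised} family of splittings with polynomial-in-$N$ trade-off between an $L_2$ norm and a \emph{finer} Besov norm $\dot B^{s_{p_2}+\delta_2}_{p_2,p_2}$ (better integrability index $p_2>p$, better summability index $p_2<\infty$, \emph{and} extra smoothness $\delta_2$), while (ii) both pieces stay uniformly bounded in the original $\dot B^{s_p}_{p,\infty}$, and (iii) the divergence-free constraint is preserved by each piece. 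The Littlewood--Paley cutoff handles (iii) for free and (ii) trivially; the real work is calibrating the Bernstein exponents so that the single threshold $J(N)$ delivers \emph{all} of the gains in (i) at once with honestly $p$-dependent rates --- in particular verifying that the critical-scaling identities $3/2-3/p-s_p = 1/2$ and $3/p-3/p_2-s_p = -s_{p_2}$ leave enough room for a strictly positive $\delta_2$.
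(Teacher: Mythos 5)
Your frequency-cutoff decomposition $\widetilde g^N=\sum_{j\le J}\dot\Delta_j g$, $\bar g^N=\sum_{j>J}\dot\Delta_j g$ does not work, and the two places where it fails are exactly the two estimates that carry all the content. First, the $L_2$ bound on $\widetilde g^N$: Bernstein's inequality only moves the Lebesgue exponent \emph{upwards}, so for $p>2$ there is no inequality of the form $\|\dot\Delta_j g\|_{L_2}\lesssim 2^{3j(1/2-1/p)}\|\dot\Delta_j g\|_{L_p}$. A frequency-localized function in $L_p$ with $p>2$ need not be in $L_2$ at all (frequency localization gives no spatial decay), so no choice of $J$ makes $\sum_{j\le J}\dot\Delta_j g$ land in $L_2$. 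Second, the $\dot B^{s_{p_2}+\delta_2}_{p_2,p_2}$ bound on $\bar g^N$: the hypothesis gives only $\|\dot\Delta_j g\|_{L_{p_2}}\lesssim 2^{-js_{p_2}}$, hence $2^{jp_2(s_{p_2}+\delta_2)}\|\dot\Delta_j g\|_{L_{p_2}}^{p_2}\lesssim 2^{jp_2\delta_2}$, which diverges when summed over $j>J$ for any $\delta_2>0$; you noticed this and promised to ``fix the bookkeeping,'' but there is nothing to fix --- within a pure frequency truncation the smoother-index piece must be the \emph{low}-frequency tail, and even then the $L_2$ piece remains unobtainable. These are not sign errors; they show that no splitting of $g$ along frequencies alone can produce the claimed pair of spaces.

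The missing idea is an \emph{amplitude} (height) truncation of each Littlewood--Paley block, which is what the paper does. Setting $f^{(j)}=\dot\Delta_j g$ and cutting at a $j$-dependent height $M(j,N)$, one has the elementary pointwise inequalities $\|f^{(j)}\chi_{|f^{(j)}|\le M}\|_{L_{p_0}}^{p_0}\le M^{p_0-p}\|f^{(j)}\|_{L_p}^{p}$ (gain of integrability upwards, $p_0>p$) and $\|f^{(j)}\chi_{|f^{(j)}|>M}\|_{L_2}^{2}\le M^{2-p}\|f^{(j)}\|_{L_p}^{p}$ (gain of integrability downwards to $2<p$); the height $M(j,N)=N\,2^{(ps_p-p_0(s_{p_0}+\delta))j/(p_0-p)}$ is calibrated so that both resulting series are controlled uniformly in $j$, after re-localizing each truncated block with $\sum_{|m-j|\le1}\dot\Delta_m$ and resumming via the paper's Lemma on Fourier-supported series. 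The paper then needs a second round of the same truncation (its Lemma on the $\dot B^0_{2,\infty}$ piece, combined with the interpolation inequality of Proposition 2.7) to convert $\dot B^0_{2,\infty}$ into genuine $L_2$ with a negative power of $N$, an interpolation step to trade the $\ell^\infty$ summability for $\ell^{p_2}$ at the cost of a small loss in smoothness, and finally the Leray projector to restore divergence-freeness --- which, unlike your frequency cutoff, the amplitude truncation destroys. Your observations (ii) and (iii) at the end are correct in spirit, but the ``real work'' you defer to bookkeeping is precisely the part that requires replacing the frequency threshold by a height threshold.
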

This refines previous decompositions for homogeneous Besov spaces, proven by the author in \cite{barker2017}. This is the main new observation of this paper. 
Unlike the decompositions in \cite{barker2017}, it is not clear if related decompositions are obtainable by the known real interpolation theory of homogeneous Besov spaces.  
 
 Once Proposition \ref{besovinfinityinterpolationtheorem} is established, one can argue in a similar manner to the case of global weak $L^{3,\infty}$ solutions to obtain improved decay properties of $u(x,t):=v(x,t)-V(x,t)$ near the initial time, which we will state as a Lemma. Related properties have also been exploited by the author in establishing weak strong uniqueness results for the Navier-Stokes equation, see \cite{barker2017}.
 \begin{lemma}\label{venergyest}
Let $u$, $v$ and $u_0$ be as in Definition \ref{globalBesovinf}. Let $\gamma_{1}$, $\gamma_{2}$, $\delta_{2}$ and $p_2$ be as in Proposition \ref{besovinfinityinterpolationtheorem}. Then there exists a $\beta(\gamma_{1},\gamma_{2},\delta_{2})>0$ such that the following estimate is valid for any $0<t<T$:
\begin{equation}\label{venergybddscaled}
\|u(\cdot,t)\|_{L_{2}}^2+\int\limits_0^t\int\limits_{\mathbb{R}^3} |\nabla u|^2dxdt'\leq
C(T, \|u_0\|_{\B},\delta_{2})t^{\beta}.
\end{equation}
\end{lemma}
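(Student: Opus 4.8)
The plan is to mimic the Caldér\'on-type splitting argument used for global weak $L^{3,\infty}$ solutions, but now fed by the Besov decomposition of Proposition \ref{besovinfinityinterpolationtheorem} applied to $g=u_0$ (with $p=4$, so $s_p=-\tfrac14$). Fix $0<t<T$ and $N>0$. Write $u_0=\bar u_0^N+\widetilde u_0^N$ as in Proposition \ref{besovinfinityinterpolationtheorem}. Accordingly split the caloric extension $V=\bar V^N+\widetilde V^N$, where $\bar V^N:=S(t)\bar u_0^N$ and $\widetilde V^N:=S(t)\widetilde u_0^N$. The key a priori bounds are: first, since $\widetilde u_0^N\in L_2$ with $\|\widetilde u_0^N\|_{L_2}\le N^{-\gamma_2}C(\|u_0\|_{\B})$, parabolic smoothing gives
\begin{equation}\label{plan:tildeVbounds}
\sup_{0<s<T}\|\widetilde V^N(\cdot,s)\|_{L_2}+\Big(\int_0^t\int_{\mathbb R^3}|\nabla\widetilde V^N|^2\,dx\,ds'\Big)^{1/2}\le N^{-\gamma_2}C(\|u_0\|_{\B});
\end{equation}
second, since $\bar u_0^N\in\dot B^{s_{p_2}+\delta_2}_{p_2,p_2}$ with norm $\le N^{\gamma_1}C(\|u_0\|_{\B})$, the inhomogeneous smoothing estimate for the heat semigroup in Besov scales yields, for a suitable exponent (depending only on $p_2,\delta_2$), something like
\begin{equation}\label{plan:barVbounds}
\int_0^t\|\bar V^N(\cdot,s)\|_{L_4}^4\,ds'\le N^{4\gamma_1}\,C(\|u_0\|_{\B})\,t^{\theta},\qquad \theta=\theta(p_2,\delta_2)>0,
\end{equation}
the crucial point being the positive power of $t$ coming from $\delta_2>0$ together with $p_2>4$ (so that the relevant integrability index is subcritical). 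From \eqref{barV_1est.1}--\eqref{barV_2est.1} we also retain the uniform-in-$N$ bound $\sup_{0<s<T}s^{1/8}\|V(\cdot,s)\|_{L_4}\le C(\|u_0\|_{\B})$, hence $\int_0^t\|V\|_{L_4}^4\,ds'\le C(\|u_0\|_{\B})\,t^{1/2}$, which handles any leftover terms not absorbed by the splitting.

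Next I would plug this into the energy inequality \eqref{venergyineq}. Writing $V\otimes u+V\otimes V=(\bar V^N+\widetilde V^N)\otimes u+V\otimes(\bar V^N+\widetilde V^N)$ and integrating by parts, the terms involving $\widetilde V^N$ are estimated by H\"older and \eqref{plan:tildeVbounds}, producing factors $N^{-\gamma_2}$ times (at worst) $\|u(\cdot,t)\|_{L_2}$, $\|\nabla u\|_{L_2(Q_t)}$, and $\|V\|_{L_4(Q_t)}$-type quantities; these are absorbed using Young's inequality with a small parameter into the left-hand side, at the cost of constants depending on $\|u_0\|_{\B}$ and a power of $N^{-\gamma_2}$. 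The terms involving $\bar V^N$ are estimated by H\"older in $L_4$, using \eqref{plan:barVbounds}: e.g. $|\int_0^t\!\int \bar V^N\otimes u:\nabla u| \le \|\bar V^N\|_{L_4(Q_t)}\|u\|_{L_4(Q_t)}\|\nabla u\|_{L_2(Q_t)}$, and by the Ladyzhenskaya/Sobolev interpolation $\|u\|_{L_4(Q_t)}\lesssim \|u\|_{L_\infty(0,t;L_2)}^{1/4}\|\nabla u\|_{L_2(Q_t)}^{3/4}$, so this term is bounded by $N^{\gamma_1}C(\|u_0\|_{\B})t^{\theta/4}\big(\sup_s\|u\|_{L_2}^2+\|\nabla u\|_{L_2(Q_t)}^2\big)^{7/8}$, again absorbable by Young. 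Collecting everything and denoting $E(t):=\|u(\cdot,t)\|_{L_2}^2+\int_0^t\!\int|\nabla u|^2$, one arrives at an inequality of the shape
\begin{equation}\label{plan:Ebound}
E(t)\le C(\|u_0\|_{\B})\Big(N^{-2\gamma_2}+N^{a\gamma_1}t^{b}+N^{a'\gamma_1}t^{b'}\Big)
\end{equation}
for explicit positive $a,a',b,b'$ (the $t$-powers strictly positive because each $\bar V^N$ term carries $t^{\theta/4}$ and the $V$-only remainder carries $t^{1/2}$), after the standard absorption has been carried out and the sub-critical powers of $E$ on the right have been soaked up via Young.

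The last step is the optimization in $N$: in \eqref{plan:Ebound} choose $N=N(t)=t^{-\sigma}$ for a suitable small $\sigma>0$ balancing $N^{-2\gamma_2}$ against $N^{a\gamma_1}t^{b}$, which gives $E(t)\le C(T,\|u_0\|_{\B},\delta_2)\,t^{\beta}$ with $\beta=\beta(\gamma_1,\gamma_2,\delta_2)>0$ — exactly \eqref{venergybddscaled}. I expect the main obstacle to be bookkeeping the exponents so that (i) every $\bar V^N$-contribution genuinely comes with a \emph{positive} power of $t$, which is where the strict inequality $\delta_2<-s_{p_2}$ and $p_2>4$ in Proposition \ref{besovinfinityinterpolationtheorem} must be used to keep the relevant heat-flow estimate subcritical, and (ii) the powers of $E(t)$ appearing on the right after H\"older/Ladyzhenskaya are all $<1$ so that Young's inequality closes the estimate; getting a clean single exponent $\beta>0$ after the $N$-optimization is then a routine but slightly delicate computation. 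A secondary technical point is justifying the integrations by parts and the finiteness of the right-hand side of \eqref{venergyineq} a priori — but this is exactly the content of Remark \ref{holderuinteg} and the cited Lemma \ref{integabilitynonlinearity}, so it may be invoked directly.
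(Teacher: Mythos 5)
Your overall architecture is the right one and matches the paper's: decompose $u_0=\bar u_0^N+\widetilde u_0^N$ via Proposition \ref{besovinfinityinterpolationtheorem}, let the $L_2$-smallness $N^{-\gamma_2}$ of $\widetilde u_0^N$ and the subcritical regularity of $\bar u_0^N$ fight each other, and optimize $N=t^{-\sigma}$ at the end. But the two analytic steps that actually close the estimate are replaced by steps that fail. First, your treatment of the critical term $\int_0^t\!\int\bar V^N\otimes u:\nabla u$ is wrong on two counts. The inequality $\|u\|_{L_4(Q_t)}\lesssim\|u\|_{L_\infty(0,t;L_2)}^{1/4}\|\nabla u\|_{L_2(Q_t)}^{3/4}$ is false in three dimensions: the energy class embeds only into $L_{s,l}(Q_t)$ with $\frac{2}{l}+\frac{3}{s}=\frac{3}{2}$ (so $L_{10/3,10/3}$ or $L_{4,8/3}$, not $L_{4,4}$), since $\int_0^t\|u\|_{L_4}^4\,ds\le C\sup_s\|u\|_{L_2}\int_0^t\|\nabla u\|_{L_2}^3\,ds$ and $\|\nabla u(\cdot,s)\|_{L_2}$ is only square-integrable in time. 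Moreover, even granting your bound, the exponents add up to $\tfrac18+\tfrac38+\tfrac12=1$, i.e.\ the term is $N^{\gamma_1}Ct^{\theta/4}E(t)^{1}$, not $E(t)^{7/8}$; a term with power exactly $1$ cannot be absorbed by Young's inequality without a smallness of the prefactor that you do not have for all $t<T$. This term is critical by scaling and that is precisely the obstruction the lemma must overcome. The paper's device is different: from $\dot B^{s_{p_2}+\delta_2}_{p_2,p_2}\hookrightarrow\dot B^{-1+\delta_2}_{\infty,\infty}$ one gets $\|\bar V^N(\cdot,s)\|_{L_\infty}\le N^{\gamma_1}C\,s^{-\frac12+\frac{\delta_2}{2}}$, so the term is bounded by $N^{\gamma_1}C\|\nabla w^N\|_{L_2(Q_t)}\bigl(\int_0^t s^{-1+\delta_2}\|w^N(\cdot,s)\|_{L_2}^2\,ds\bigr)^{1/2}$, which after Young's inequality is \emph{linear} in $y_N(s)=\|w^N(\cdot,s)\|_{L_2}^2$ with an integrable kernel and is closed by Gronwall's lemma — not by absorption. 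Gronwall is the mechanism that handles all $t<T$, and it is absent from your plan.

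Second, you keep the energy inequality \eqref{venergyineq} for $u$ and split $V=\bar V^N+\widetilde V^N$ inside it, which leaves the term $\int_0^t\!\int\widetilde V^N\otimes u:\nabla u$. This cannot be estimated the way you suggest: both $\widetilde V^N$ and $u$ are merely in the energy class, so their product lies only in $L_{5/3}(Q_t)$, which does not pair with $\nabla u\in L_2(Q_t)$; and any attempt to cash in the factor $N^{-\gamma_2}$ via $\|\widetilde V^N(\cdot,s)\|_{L_q}\le C N^{-\gamma_2}s^{-\frac32(\frac12-\frac1q)}$ produces a weight $s^{-a}$ against $\|u(\cdot,s)\|_{L_2}^2$ whose time-integrability is exactly the decay you are trying to prove — the argument becomes circular, since a priori one only has \eqref{perturbedspaces} with an unspecified $\alpha>0$. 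The paper avoids this term altogether by first proving Lemma \ref{energyinequalitysplitting}: an energy inequality for the shifted unknown $w^N=u+\widetilde V^N$ in which $\widetilde V^N$ contributes only through $\|\widetilde u_0^N\|_{L_2}^2\le N^{-2\gamma_2}C$ on the right-hand side and the only drift terms are $\bar V^N\otimes w^N$ and $\bar V^N\otimes\bar V^N$. Passing through $w^N$ (and then recovering $u=w^N-\widetilde V^N$ using the heat energy equality) is not an optional convenience here; it is what removes the term your splitting cannot handle.
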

Once this Lemma is established, Theorem \ref{weak stability} and Corollary \ref{existenseglobal} follow by similar arguments presented for global $L^{3,\infty}$ solutions in \cite{barkerser16}-\cite{barkersersverak16}.

We will also prove some conditional uniqueness and regularity statements for global weak  $\B$-solutions.

 \begin{pro}\label{besovsmallness}
 Let $u_{0}\in \B$ be divergence free in the sense of tempered distributions.
 There exists an $\varepsilon>0$ such that if
 \begin{equation}\label{initialdatacondition}
 \sup_{0<t<T} t^{\frac 1 8} \|S(t)u_0\|_{L_{4}}<\varepsilon 
 \end{equation}
 then all global $\B$ solutions, with initial data $u_0\in \B$, coincide on $Q_{T}$.
 
 \end{pro}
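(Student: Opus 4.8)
The plan is to prove Proposition~\ref{besovsmallness} by a weak-strong uniqueness argument: under the smallness condition (\ref{initialdatacondition}), the caloric extension $V = S(t)u_0$ is a small perturbation datum for the perturbed Navier--Stokes system (\ref{perturbdirectsystem}), so one can construct, via a fixed-point/energy argument, a unique "mild-in-the-energy-class" solution $u$ of (\ref{perturbdirectsystem}) on $Q_T$; then one shows every global $\B$ solution must coincide with this one. First I would set up notation: write $V(\cdot,t)=S(t)u_0$, let $\mathcal{E}:=\sup_{0<t<T} t^{1/8}\|V(\cdot,t)\|_{L_4} < \varepsilon$, and record the linear smoothing estimates for $V$ that follow (including $\|V(\cdot,t)\|_{L_4}\le \mathcal{E} t^{-1/8}$, and by interpolation/the semigroup action bounds on $\|V\|_{L_\infty_t L_{2,\mathrm{loc}}}$-type quantities and the space-time bound $V\in L_4(0,T;L_4)$, since $\int_0^T t^{-1/2}\,dt<\infty$). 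The point of the $L_4 L_4$ bound is exactly to make the right-hand side of (\ref{venergyineq}) finite, as flagged in Remark~\ref{holderuinteg} via Lemma~\ref{integabilitynonlinearity}.

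Next I would carry out the uniqueness estimate itself. Suppose $v^{(1)}=V+u^{(1)}$ and $v^{(2)}=V+u^{(2)}$ are two global $\B$ solutions with the same data $u_0$, so $u^{(1)},u^{(2)}\in L_\infty(0,T;J)\cap L_2(0,T;\stackrel{\circ}J{^1_2})$ both satisfy the energy inequality (\ref{venergyineq}) and the associated local energy inequality (\ref{localenergyinequality}). Set $w:=u^{(1)}-u^{(2)}$. Because of (\ref{perturbedspaces}) each $u^{(i)}$ attains zero initial data strongly in $L_2$, so $w(\cdot,0)=0$. The standard route is to test the difference equation against $w$ (justified by combining the energy inequality for $u^{(1)}$ with the variational/local energy inequality for $v^{(2)}$, i.e.\ a Serrin-type argument adapted to the perturbed system --- this is where one must be careful, since only inequalities, not identities, are available; the argument mirrors the weak-strong uniqueness proofs for global $L^{3,\infty}$ solutions in \cite{barkerser16}--\cite{barkersersverak16} and in \cite{barker2017}). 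This yields, for a.e.\ $t\in(0,T)$,
\begin{equation}
\|w(\cdot,t)\|_{L_2}^2 + \int_0^t\!\!\int_{\mathbb{R}^3}|\nabla w|^2\,dx\,dt' \le C\int_0^t\!\!\int_{\mathbb{R}^3}\bigl(|V|+|u^{(1)}|+|u^{(2)}|\bigr)|w||\nabla w|\,dx\,dt',
\end{equation}
schematically; the cubic-in-$u$ terms are absorbed using that $\|u^{(i)}(\cdot,t)\|_{L_2}\to 0$ and Ladyzhenskaya's inequality on a short time interval, while the term with $V$ is handled by Hölder ($L_4$ in space for $V$, $L_4$ for $w$, $L_2$ for $\nabla w$), Ladyzhenskaya $\|w\|_{L_4}\le C\|w\|_{L_2}^{1/4}\|\nabla w\|_{L_2}^{3/4}$, and Young's inequality, producing a bound of the form $\int_0^t \|V(\cdot,t')\|_{L_4}^8 \|w(\cdot,t')\|_{L_2}^2\,dt' \le \mathcal{E}^8\int_0^t (t')^{-1}\|w(\cdot,t')\|_{L_2}^2\,dt'$ after one quarter of the dissipation is absorbed.

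The key point is the borderline weight: the bound involves $\int_0^t (t')^{-1}\|w(\cdot,t')\|_{L_2}^2\,dt'$, which is a scale-invariant (logarithmically divergent) weight, so one cannot simply apply Grönwall directly. This is the main obstacle, and it is precisely why the smallness $\varepsilon$ enters: writing $f(t):=\sup_{0<t'<t}\|w(\cdot,t')\|_{L_2}^2$, one gets $f(t)\le C\mathcal{E}^8 \big(\int_0^t (t')^{-1}dt'\big) f(t)$ only formally; the correct treatment is to note that $\mathcal{E}^8$ multiplies the singular weight, choose $\varepsilon$ so that $C\varepsilon^8 < 1$, and argue on the quantity $g(t):= \int_0^t (t')^{-1}\|w(\cdot,t')\|_{L_2}^2\,dt'$ together with an a~priori $L_2$-bound $\|w(\cdot,t)\|_{L_2}^2 \lesssim t^{\beta}$ coming from Lemma~\ref{venergyest} applied to both $u^{(1)}$ and $u^{(2)}$ --- the extra power $t^{\beta}$ makes $g$ finite and lets one run a Grönwall/bootstrap in the variable $g$ to conclude $g\equiv 0$, hence $w\equiv 0$ on $[0,T]$. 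Finally, $u^{(1)}=u^{(2)}$ gives $v^{(1)}=v^{(2)}$ on $Q_T$, completing the proof. I would remark that the use of Lemma~\ref{venergyest} (via Proposition~\ref{besovinfinityinterpolationtheorem}) is what replaces, in this borderline setting, the cleaner argument available for subcritical data, and that only the smallness of the \emph{caloric extension}, not of $u_0$ in $\B$ itself, is required --- consistent with the statement.
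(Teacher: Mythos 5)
Your overall strategy --- weak--strong uniqueness in the energy class, with the borderline weight $\int_0^t (t')^{-1}\|w(\cdot,t')\|_{L_2}^2\,dt'$ beaten by the $t^{\beta}$ decay from Lemma \ref{venergyest} together with the smallness $C\varepsilon^8<1$ --- is the paper's, and your first paragraph even states the correct plan (build one good solution, show every global $\B$ solution equals it). The gap is that your second paragraph abandons that plan: you take \emph{two arbitrary} global $\B$ solutions $v^{(1)},v^{(2)}$ and estimate the difference $w=u^{(1)}-u^{(2)}$ directly, which leaves terms of the schematic form $\int_0^t\int |u^{(i)}|\,|w|\,|\nabla w|\,dx\,dt'$ on the right-hand side. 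These cannot be closed for general energy-class functions: Ladyzhenskaya gives $\int |u^{(i)}||w||\nabla w|\,dx\le C\|u^{(i)}\|_{L_2}^{1/4}\|\nabla u^{(i)}\|_{L_2}^{3/4}\|w\|_{L_2}^{1/4}\|\nabla w\|_{L_2}^{7/4}$, and after absorbing $\|\nabla w\|_{L_2}^{2}$ by Young's inequality you are left with a factor $\|\nabla u^{(i)}(\cdot,t')\|_{L_2}^{6}$, which is not integrable in time for a general $u^{(i)}\in L_2(0,T;\stackrel{\circ}J{^1_2})$; smallness of $\|u^{(i)}(\cdot,t)\|_{L_2}$ near $t=0$ does not help, since the obstruction is integrability, not size. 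This is the classical reason weak--weak uniqueness fails and why one of the two solutions must lie in a scale-invariant Serrin-type class.

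The paper repairs exactly this point by first running a Kato iteration (Proposition \ref{strongsolution}) under the hypothesis (\ref{initialdatacondition}) to produce one particular weak $\B$-solution $\widetilde v=V+\widetilde u$ lying in the Kato class $X_4(T)$, with $\sup_{0<t<T}t^{1/8}\|\widetilde v(\cdot,t)\|_{L_4}<2\varepsilon$ and satisfying the energy \emph{equality}; it then compares an arbitrary global $\B$ solution $v$ against $\widetilde v$. After combining the two energy inequalities with the cross-term identity (\ref{crosstermB}), the only surviving trilinear term is $\int_0^t\int \widetilde v\otimes w:\nabla w\,dx\,dt'$, which Lemma \ref{trilinear} (with $p=4$, $r=8$) controls by $C\|V\|_{X_4(T)}^{8}\int_0^t \tau^{-1}\|w(\cdot,\tau)\|_{L_2}^2\,d\tau$ plus half the dissipation --- precisely the borderline term you identified, now carrying the small constant. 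From there your closing argument (finiteness of $\sup_{\tau}\tau^{-\beta}\|w(\cdot,\tau)\|_{L_2}^2$ via Lemma \ref{venergyest} applied to both $u$ and $\widetilde u$, then absorption once $C'\varepsilon^8/\beta<1/2$) coincides with the paper's. So the missing ingredient is the construction of, and comparison against, the mild solution in $X_4(T)$; without it the trilinear estimate does not close and the proof as written has a genuine hole.
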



 Unfortunately, the assumptions of Proposition \ref{besovsmallness} do not hold for arbitrary divergence free initial data in $\B$. In particular, for  large minus one solenoidal initial data. For this case, the conjectures in \cite{jiasverak2014} and \cite{jiasverak2015} suggest that there may be non uniqueness for global $\B$ solutions, even for a short time.
 
This contrasts drastically with the global $L_{3}$ solutions to the Navier-Stokes equations, which are constructed in \cite{sersve2016} for any $L_{3}(\mathbb{R}^3)$ solenoidal initial data. In \cite{sersve2016}, short time uniqueness and regularity results are shown to hold for arbitrary $L_{3}(\mathbb{R}^3)$ solenoidal initial data. A key feature in showing this is that for any $L_{3}(\mathbb{R}^3)$ solenoidal initial data, there exists a local in time mild solution to the Navier-Stokes equations. This was proven in \cite{Kato1984}, which crucially makes use of the fact that  
\begin{equation}\label{kato5classshrink}
\lim_{T\rightarrow 0}\sup_{0<t<T} t^{\frac 1 5} \|S(t)u_0\|_{L_{5}(\mathbb{R}^3)}=0,
\end{equation}
for any $ u_0\in L_{3}(\mathbb{R}^3)$. Such a property follows from the density of test functions in $L_{3}(\mathbb{R}^3).$

Although  local in time mild solutions to the Navier-Stokes equations have been constructed for wide classes of initial data (see for example,  \cite{cannone1997}, \cite{FujKato1964}, \cite{GigaMiy1989}, \cite{KoTa2001},    \cite{KozYam1994},  \cite{Plan1996}, and \cite{Taylor1992}) it is not known if they can be constructed for arbitrary $\B$ solenoidal initial data.
The main obstacle is that, unlike $L_{3}(\mathbb{R}^3)$, test functions are not dense in $\B$.

\begin{subsection}{Solutions to the Navier-Stokes Equations with Initial Values in $\dot{B}^{s_p}_{p,\infty}(\mathbb{R}^3)$ and Applications to Regularity Criteria}
For solenodial initial data in $u_0\in \dot{B}^{s_p}_{p,\infty}(\mathbb{R}^3)$ with $4<p<\infty$, we do not know if there is a solution to the Navier-Stokes equations that satisfies \textbf{1) Global existence-2) Weak* Stability}.  Proposition \ref{besovinfinityinterpolationtheorem}, together with ideas from \cite{Calderon90} and a Lemma from \cite{dallas}, is sufficient to give a notion of solution of the Navier-Stokes equations with  solenodial $\Bp$ initial data. To the best of our knowledge, there is no previous notion of solution to the Navier-Stokes equations with arbitrary solenodial $\Bp$ initial data. Note that under certain smallness conditions on the $\Bp$ data, solutions were constructed by means of the Banach contraction principle, in \cite{Plan1996} and \cite{cannone1997}. We also mention an application of this solution in providing a new regularity criteria for weak Leray-Hopf solutions of the Navier-Stokes equations.

Let $3<p<\infty$ and suppose $u_0 \in \dot{B}^{s_p}_{p,\infty}$ is a divergence free tempered distribution. 
Let $u_0= \bar u_0^{N}+\widetilde u^{N}_0$ denote the splitting from Proposition \ref{besovinfinityinterpolationtheorem}. In particular, $p<p_2<\infty$, $0<\delta_{2}<-s_{p_2}$, $\gamma_{1}:=\gamma_{1}(p)>0$ and $\gamma_{2}:=\gamma_{2}(p)>0$ are such that   for any $N>0$ there exists  weakly divergence free functions 
$\bar{u}_0^{N}\in \dot{B}^{s_{p_2}+\delta_{2}}_{p_2,p_2}(\mathbb{R}^3)\cap \Bp(\mathbb{R}^3)$ and $\widetilde{u}^{N}_0\in L_2(\mathbb{R}^3)\cap \Bp(\mathbb{R}^3) $ with
 
\begin{equation}\label{Vdecomp1intro}
u_0= \bar{u}_0^{N}+\widetilde{u}_0^{N},
\end{equation}
\begin{equation}\label{barV_1estintro}
\|\bar{u}_0^{N}\|_{\dot{B}^{s_{p_2}+\delta_{2}}_{p_2,p_2}}\leq N^{\gamma_{1}}C(p, \|u_0\|_{\Bp}),
\end{equation}
\begin{equation}\label{barV_2estintro}
\|\widetilde{u}_0^{N}\|_{L_2}\leq N^{-\gamma_{2}}C(p,\|u_0\|_{\Bp}).
\end{equation} 
Furthermore, 
\begin{equation}\label{barV_1est.1intro}
\|\bar{u}_0^{N}\|_{\Bp}\leq C(p, \|u_0\|_{\Bp}),
\end{equation}
\begin{equation}\label{barV_2est.1intro}
\|\widetilde{u}_0^{N}\|_{\Bp}\leq C(p, \|u_0\|_{\Bp}).
\end{equation} 
\begin{theorem}\label{Bpsolutionexistence}
Let $u_0 \in \dot{B}^{s_p}_{p,\infty}(\mathbb{R}^3)$ be divergence free, in the sense of tempered distributions, and let $4<p<\infty$.
For any  finite $T>0$ there exists an $N=N(\|u_0\|_{\dot{B}^{s_p}_{p,\infty}},T)>0$ such that the following conclusions hold. 
  In particular, there exists a solution to Navier-Stokes IVP in $Q_T$ 
  such that 
\begin{equation}\label{weaksolutionsplittingBp}
v=W^{N}+u^{N}.
\end{equation}
Here, $W^{N}$ is a mild solution to the Navier-Stokes equations, with initial data $\bar{u}_{0}^{N}\in\dot{B}^{s_{p_2}+\delta_{2}}_{p_2,p_2}(\mathbb{R}^3)$, such that
\begin{equation}\label{mildBesovestimate}
\sup_{0<t<T} t^{\frac{p_2}{2}-\frac{\delta_{2}}{2}}\|W^N(\cdot,t)\|_{L_{p_2}}\leq  N^{\gamma_{1}}C(p, \|u_0\|_{\dot{B}^{s_{p}}_{p,{\infty}}}).
\end{equation}
Furthermore, $u^{N}\in L_{\infty}(0,T; J)\cap L_{2}(0,T;\stackrel{\circ} J{^1_2})$.  
Additionally,  there exists a $q^{N}\in L_{\frac 32, {\rm loc}}(Q_T)$ such that $u^{N}$ and $q^{N}$  satisfy the perturbed Navier-Stokes system in the sense of distributions:
\begin{equation}\label{perturbdirectsystemBp}
\partial_t u^{N}+v\cdot\nabla v-W^{N}\cdot\nabla W^{N}-\Delta u^{N}=-\nabla q^{N},\qquad\mbox{div}\,u^{N}=0
\end{equation}
in $Q_T$.
Furthermore, for any $w\in L_{2}$:
\begin{equation}\label{vweakcontinuityBp}
t\rightarrow \int\limits_{\mathbb R^3} w(x)\cdot u^{N}(x,t)dx
\end{equation}
is a continuous function on  $[0,T]$
and
\begin{equation}\label{initialconditionBp}
\lim_{t\rightarrow 0+} \|u^{N}(\cdot,t)- \widetilde{u}_0^N\|_{L_{2}}=0.
\end{equation}
Moreover, $u$ satisfies the energy inequality:
\begin{equation}\label{venergyineqBp}
\|u^{N}(\cdot,t)\|_{L_{2}}^2+2\int\limits_{0}^t\int\limits_{\mathbb R^3} |\nabla u^{N}(x,t')|^2 dxdt'\leq$$$$\leq 2 \int_{0}^t\int_{\mathbb R^3}(W^{N}\otimes u^{N}):\nabla u^{N}dxdt'+\|\widetilde{u}_{0}^{N}\|_{L_2}^2
\end{equation}
for all $t\in [0,T]$.

Finally, $v$ and $q$ satisfy the local energy inequality.
Namely, for almost every $t\in ]0,T[$ the following inequality holds for all non negative functions $\varphi\in C_{0}^{\infty}(Q_T)$:
\begin{equation}\label{localenergyinequalityBp}
\int\limits_{\mathbb R^3}\varphi(x,t)|v(x,t)|^2dx+2\int\limits_{0}^t\int\limits_{\mathbb R^3}\int\varphi |\nabla v|^2 dxdt^{'}\leq$$$$\leq
\int\limits_{0}^{t}\int\limits_{\mathbb R^3}|v|^2(\partial_{t}\varphi+\Delta\varphi)+v\cdot\nabla\varphi(|v|^2+2q) dxdt^{'}.
\end{equation}

\end{theorem}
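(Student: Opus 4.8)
The plan is to mirror the Calderón-type splitting argument used for global $L^{3,\infty}$ solutions, but with the linear-heat-flow piece replaced by a genuine mild solution with subcritical data, exploiting the decomposition of Proposition \ref{besovinfinityinterpolationtheorem}. First I would fix $T$ and choose $N$ large. Apply Proposition \ref{besovinfinityinterpolationtheorem} to obtain the splitting $u_0=\bar u_0^N+\widetilde u_0^N$ with $\bar u_0^N\in\dot B^{s_{p_2}+\delta_2}_{p_2,p_2}$ and $\widetilde u_0^N\in L_2$, and the norm bounds \eqref{barV_1estintro}--\eqref{barV_2est.1intro}. Since $\dot B^{s_{p_2}+\delta_2}_{p_2,p_2}$ is subcritical (the regularity index $s_{p_2}+\delta_2>s_{p_2}=-1+3/p_2$), there is a standard fixed-point space — namely $\sup_{0<t<T}t^{\sigma}\|W(\cdot,t)\|_{L_{p_2}}$ with $\sigma=\tfrac12(1-\tfrac{3}{p_2})-\tfrac{\delta_2}{2}$ (rewritten in the exponent of \eqref{mildBesovestimate}) — in which Kato's iteration produces, \emph{for any finite $T$ provided $N$ is chosen so that the relevant norm is small enough after the time-weight is taken into account}, a mild solution $W^N$ with data $\bar u_0^N$ satisfying \eqref{mildBesovestimate}. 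Here I would invoke the cited Lemma from \cite{dallas} to package the linear estimate $\sup_t t^{\sigma}\|S(t)\bar u_0^N\|_{L_{p_2}}\lesssim \|\bar u_0^N\|_{\dot B^{s_{p_2}+\delta_2}_{p_2,p_2}}$ together with the bilinear (Oseen) estimate, so that smallness of $\|\bar u_0^N\|$ is \emph{not} required; instead one runs the iteration on a short interval and continues, or absorbs the $T$-dependence into the choice of $N$ — this is exactly the role of $N=N(\|u_0\|_{\Bp},T)$.

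Next I would set $u^N:=v-W^N$ and derive the perturbed system \eqref{perturbdirectsystemBp}: subtracting the equation for $W^N$ from the (to-be-constructed) equation for $v$ gives $\partial_t u^N-\Delta u^N+v\cdot\nabla v-W^N\cdot\nabla W^N=-\nabla q^N$, with $\operatorname{div}u^N=0$ and initial value $u^N(\cdot,0)=\widetilde u_0^N\in L_2$. One then constructs $u^N$ by a Leray–Hopf-type approximation scheme (Galerkin or mollified) for this perturbed system, whose forcing terms involve $W^N$; the key point is that $W^N\in L_4(Q_T)$ (and in fact $L_{p_2}$ with the time weight, which by interpolation with the energy bound gives enough integrability), so the bilinear terms $W^N\otimes u^N$ and $W^N\otimes W^N$ are controllable in the energy estimate. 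This produces $u^N\in L_\infty(0,T;J)\cap L_2(0,T;\stackrel{\circ}J{}^1_2)$, the weak continuity \eqref{vweakcontinuityBp}, the strong initial condition \eqref{initialconditionBp}, and the energy inequality \eqref{venergyineqBp} (where the extra $+\|\widetilde u_0^N\|_{L_2}^2$ simply records the nonzero initial datum and the term $W^N\otimes W^N:\nabla u^N$ is handled by integrating by parts onto $W^N\cdot\nabla W^N$ and using $W^N$'s regularity; it does not survive as a standalone term because $W^N$ solves its own equation — one must be careful here which form of the identity to quote). Finally, $v=W^N+u^N$ solves the original Navier-Stokes system, and the local energy inequality \eqref{localenergyinequalityBp} is obtained as usual for suitable weak solutions, using that $W^N$ is smooth for $t>0$ so all products with test functions are classical.

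The main obstacle, and the step needing genuine care, is the construction of the mild solution $W^N$ on the \emph{full} interval $]0,T[$ rather than just a short time: subcriticality of $\dot B^{s_{p_2}+\delta_2}_{p_2,p_2}$ gives only local-in-time well-posedness with a lifespan depending on the profile of the data, not merely its norm. The resolution is that we are free to choose $N$ depending on $T$: by \eqref{barV_1estintro} the norm $\|\bar u_0^N\|_{\dot B^{s_{p_2}+\delta_2}_{p_2,p_2}}$ is allowed to \emph{grow} in $N$, but the scaling gap $\delta_2>0$ means the time-weighted linear flow obeys $\sup_{0<t<T}t^{\cdots}\|S(t)\bar u_0^N\|_{L_{p_2}}\lesssim T^{\delta_2/2}\,\|\bar u_0^N\|_{\dot B^{s_{p_2}}_{p_2,\infty}}$ with the \emph{critical} norm on the right, and the critical norm of $\bar u_0^N$ is bounded uniformly in $N$ by \eqref{barV_1est.1intro}. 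Hence for fixed $T$ one does \emph{not} even need $N$ large for $W^N$; the largeness of $N$ is instead forced by the perturbed-energy estimate for $u^N$, where the small factor $\|\widetilde u_0^N\|_{L_2}\le N^{-\gamma_2}C$ from \eqref{barV_2estintro} must beat the $T$- and $W^N$-dependent constants coming from the bilinear terms — precisely the balance already carried out for global $L^{3,\infty}$ solutions in \cite{barkerser16}--\cite{barkersersverak16} and quantified here via Lemma \ref{venergyest}'s mechanism. I would therefore present the $W^N$ construction first as essentially classical (citing \cite{dallas} and \cite{Calderon90}), and spend the bulk of the proof on the energy closure for $u^N$ and on verifying that $v=W^N+u^N$ inherits the local energy inequality.
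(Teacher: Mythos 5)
Your overall architecture coincides with the paper's: split $u_0=\bar u_0^N+\widetilde u_0^N$ via Proposition \ref{besovinfinityinterpolationtheorem}, solve mildly from the subcritical piece $\bar u_0^N$, and run a perturbed Leray--Hopf/Calder\'on scheme from the $L_2$ piece. The paper packages the last two steps as Propositions \ref{regularityinfinity} and \ref{Dallas} and reduces the entire proof to verifying the smallness hypothesis $4c(p_2)T^{\delta_2/2}\|\bar u_0^N\|_{\dot{B}^{s_{p_2}+\delta_2}_{p_2,p_2}}<1$. The gap in your proposal sits exactly in the step you flag as the delicate one: how $N$ is chosen. Your claimed inequality $\sup_{0<t<T}t^{\sigma}\|S(t)\bar u_0^N\|_{L_{p_2}}\lesssim T^{\delta_2/2}\|\bar u_0^N\|_{\dot{B}^{s_{p_2}}_{p_2,\infty}}$, with $\sigma=-\tfrac{1}{2}(s_{p_2}+\delta_2)$ the subcritical weight, is false: for data that is merely critical one has $\|S(t)g\|_{L_{p_2}}\sim t^{s_{p_2}/2}\|g\|_{\dot{B}^{s_{p_2}}_{p_2,\infty}}$, so $t^{\sigma}\|S(t)g\|_{L_{p_2}}\sim t^{-\delta_2/2}$ diverges as $t\to 0^{+}$; the correct inequality goes the other way (the critical time-weighted quantity is bounded by $T^{\delta_2/2}$ times the subcritical one). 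Hence you cannot dispense with smallness of the subcritical norm of $\bar u_0^N$, and this is precisely what dictates the choice of $N$: since (\ref{barV_1estintro}) gives only the upper bound $N^{\gamma_1}C(p,\|u_0\|_{\Bp})$ with $\gamma_1>0$, and there is no uniform-in-$N$ control of $\|\bar u_0^N\|_{\dot{B}^{s_{p_2}+\delta_2}_{p_2,p_2}}$ for $u_0$ outside the subcritical space, the paper takes $N$ \emph{small} (depending on $T$) so that $4c(p_2)T^{\delta_2/2}N^{\gamma_1}C<1$ and the mild solution $W^N$ exists on all of $[0,T]$.

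Your proposed resolution --- take $N$ large so that $\|\widetilde u_0^N\|_{L_2}\le N^{-\gamma_2}C$ ``beats'' the constants in the perturbed energy estimate --- therefore points in the wrong direction twice over. Large $N$ is what threatens the $W^N$ construction rather than what rescues it; and no smallness of $\|\widetilde u_0^N\|_{L_2}$ is needed anywhere, because the energy estimate for $u^N$ closes by Gronwall for arbitrary $L_2$ data using only the time-weighted integrability of $W^N$ (as in Lemma \ref{trilinear}); this is why Proposition \ref{Dallas} imposes no condition on $V_0$ and why the constant in (\ref{stableestforu}) is allowed to depend on $N$. (Smallness of $\widetilde u_0^N$ obtained by sending $N\to\infty$ is exploited only in Lemma \ref{venergyest}, where a decay rate at $t=0$ is sought --- not here.) The remainder of your outline, i.e.\ the construction of $u^N$, the form of the energy inequality and the local energy inequality, is consistent with what Propositions \ref{regularityinfinity} and \ref{Dallas} deliver.
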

\begin{remark}\label{stableestimates}
It can be shown that 
(\ref{mildBesovestimate}), (\ref{venergyineqBp}) and the estimates for $\bar{u}^{N}_{0}$ and $\widetilde{u}^{N}_{0}$ from  Proposition \ref{besovinfinityinterpolationtheorem}, implies the   following estimate for $u^{N}$ in the above Theorem:
\begin{equation}\label{stableestforu}
\sup_{0<t<T}\|u^{N}(\cdot,t)\|_{L_{2}}^2 +\int\limits_0^T\int\limits_{\mathbb{R}^3} |\nabla u^{N}|^2 dxdt'\leq C(T,p, \delta_2, \|u_{0}\|_{\dot{B}^{s_p}_{p,\infty}}, N).
\end{equation}
Recall also that  the Besov mild solution $W^{N}$ from the above Theorem satisfies
\begin{equation}\label{mildest1recap}
\sup_{0<t<T} t^{\frac{p_2}{2}-\frac{\delta_2}{2}} \|W^{N}(\cdot,t)\|_{L_{p_2}} \leq N^{\gamma_{1}}C(p, \|u_0\|_{\dot{B}^{s_{p}}_{p,{\infty}}}).
\end{equation} 
\end{remark}
\begin{subsubsection}{Applications to Regularity Criteria for Weak Leray-Hopf solutions to the Navier-Stokes Equations}
Let us now define the notion of 'weak Leray-Hopf solutions' to the Navier-Stokes system.
 \begin{definition}\label{weakLerayHopf}
Consider $0<S\leq \infty$. Let 
 \begin{equation}\label{initialdataconditionLe}
 u_0\in J(\mathbb{R}^3).
 \end{equation}
 We say that $v$ is a 'weak Leray-Hopf solution' to the Navier-Stokes Cauchy problem in $Q_S:=\mathbb{R}^3 \times ]0,S[$ if it satisfies the following properties:
\begin{equation}
v\in \mathcal{L}(S):= L_{\infty}(0,S; J(\mathbb{R}^3))\cap L_{2}(0,S;\stackrel{\circ} J{^1_2}(\mathbb{R}^3)).
\end{equation} 
Additionally,  for any $w\in L_{2}(\mathbb{R}^3)$:
\begin{equation}\label{vweakcontinuityLe}
t\rightarrow \int\limits_{\mathbb R^3} w(x)\cdot v(x,t)dx
\end{equation}
is a continuous function on  $[0,S]$ (the semi-open interval should be taken if $S=\infty$).
The Navier-Stokes equations are satisfied by $v$ in a weak sense:
\begin{equation}\label{vsatisfiesNSELe}
\int\limits_{0}^{S}\int\limits_{\mathbb{R}^3}(v\cdot \partial_{t} w+v\otimes v:\nabla w-\nabla v:\nabla w) dxdt=0
\end{equation}
for any divergent free test function $$w\in C_{0,0}^{\infty}(Q_{S}):=\{ \varphi\in C_{0}^{\infty}(Q_{S}):\,\,\rm{div}\,\varphi=0\}.$$
The initial condition is satisfied strongly in the $L_{2}(\mathbb{R}^3)$ sense:
\begin{equation}\label{vinitialconditionLe}
\lim_{t\rightarrow 0^{+}} \|v(\cdot,t)-u_0\|_{L_{2}(\mathbb{R}^3)}=0.
\end{equation}

Finally, $v$ satisfies the energy inequality:
\begin{equation}\label{venergyineqLe}
\|v(\cdot,t)\|_{L_{2}(\mathbb{R}^3)}^2+2\int\limits_{0}^t\int\limits_{\mathbb{R}^3} |\nabla v(x,t')|^2 dxdt'\leq \|u_0\|_{L_{2}(\mathbb{R}^3)}^2 
\end{equation}
for all $t\in [0,S]$ (the semi-open interval should be taken if $S=\infty$).
\end{definition}
The corresponding global in time existence result, proven in \cite{Le}, is as follows.
\begin{theorem}
Let $u_0\in J(\mathbb{R}^3)$. Then, there exists at least one weak Leray-Hopf solution on $Q_{\infty}$.
\end{theorem}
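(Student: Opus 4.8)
The plan is to reprove this classical theorem of Leray by a Galerkin approximation combined with a compactness argument adapted to the whole space $\mathbb{R}^3$. First I would set up the approximate problems: choose $\{a_k\}_{k\geq 1}\subset C^\infty_{0,0}(\mathbb{R}^3)$ orthonormal in $L_2$ with dense span in $J$ and in $\stackrel{\circ} J{^1_2}$, let $H_m=\mathrm{span}\{a_1,\dots,a_m\}$ with $P_m$ the $L_2$-orthogonal projection onto $H_m$, and seek $v^m(\cdot,t)=\sum_{k=1}^m c^m_k(t)a_k$ solving
\begin{equation}
\frac{d}{dt}(v^m,a_k)_{L_2}+(v^m\cdot\nabla v^m,a_k)_{L_2}+(\nabla v^m,\nabla a_k)_{L_2}=0,\qquad v^m(\cdot,0)=P_m u_0
\end{equation}
for $k=1,\dots,m$. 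This is an ODE system with a quadratic nonlinearity, hence locally solvable; testing with $v^m$ and using $(v^m\cdot\nabla v^m,v^m)_{L_2}=0$ gives the energy identity $\|v^m(\cdot,t)\|_{L_2}^2+2\int_0^t\|\nabla v^m\|_{L_2}^2\,dt'=\|P_m u_0\|_{L_2}^2\leq\|u_0\|_{L_2}^2$, which both allows continuation to all of $[0,\infty[$ and shows that $\{v^m\}$ is bounded in $\mathcal{L}(S)$ uniformly in $m$, for every finite $S$.

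Next I would extract a subsequence. The uniform bound gives, along a subsequence (diagonalising over $S\to\infty$), $v^m\stackrel{*}{\rightharpoonup}v$ in $L_\infty(0,S;J)$ and $v^m\rightharpoonup v$ in $L_2(0,S;\stackrel{\circ} J{^1_2})$. To upgrade to strong convergence I would estimate $\partial_t v^m$ from the equation: for $\phi\in H_m$, $|(\partial_t v^m,\phi)_{L_2}|\leq(\|\nabla v^m\|_{L_2}+\|v^m\otimes v^m\|_{L_2})\|\nabla\phi\|_{L_2}$, and since $v^m$ is bounded in $L_{\frac{10}{3}}(Q_S)$ by interpolation, $\partial_t v^m$ is bounded in $L_{\frac43}(0,S;(\stackrel{\circ} J{^1_2})^*)$. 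Because $\stackrel{\circ} J{^1_2}$ embeds compactly into $L_2(B_R)$ for every ball $B_R$, the Aubin--Lions--Simon lemma yields $v^m\to v$ strongly in $L_2(B_R\times]0,S[)$ for all $R,S$, and interpolating with the $L_{\frac{10}{3}}$ bound promotes this to strong convergence in $L_3(B_R\times]0,S[)$.

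Then I would pass to the limit in the weak formulation tested against each fixed $a_k$: the linear terms converge by weak convergence, and $\int_0^S\int v^m\otimes v^m:\nabla w\to\int_0^S\int v\otimes v:\nabla w$ for fixed $w\in C^\infty_{0,0}(Q_S)$, because $\nabla w$ is compactly supported, $v^m\to v$ in $L_2$ there, and the uniform $L_{\frac{10}{3}}$ bound gives equi-integrability. By density of $\bigcup_m H_m$ this yields (\ref{vsatisfiesNSELe}), and $\mathrm{div}\,v=0$ is inherited from the $v^m$. For the energy inequality I would invoke weak lower semicontinuity of the $L_2$ and $\dot H^1$ norms: integrating the Galerkin identity against a nonnegative weight in $t$ and letting $m\to\infty$ (with $\|P_m u_0\|_{L_2}\to\|u_0\|_{L_2}$) gives (\ref{venergyineqLe}) for a.e.\ $t$. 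The weak $L_2$-continuity in time (\ref{vweakcontinuityLe}) follows from the $\partial_t v^m$ bound, which in the limit gives $v\in C_w([0,S];L_2)$ with $v(\cdot,0)=u_0$ (since $P_m u_0\to u_0$ in $L_2$); this extends the energy inequality to every $t\in[0,S]$, and combined with $\limsup_{t\to0^+}\|v(\cdot,t)\|_{L_2}\leq\|u_0\|_{L_2}$ it upgrades the weak convergence $v(\cdot,t)\rightharpoonup u_0$ to the strong initial condition (\ref{vinitialconditionLe}).

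The main obstacle is the passage to the limit in the nonlinear term: one genuinely needs strong space-time compactness of $\{v^m\}$, which on $\mathbb{R}^3$ is only available locally (Sobolev compactness fails on the whole space), so the $\partial_t v^m$ estimate together with a localisation onto balls is essential, and the $L_{\frac{10}{3}}$ interpolation bound is what makes the local $L_2$ convergence strong enough to control $v^m\otimes v^m$. A secondary technical point is obtaining the energy inequality for \emph{every} $t\in[0,S]$ and the strong initial condition, rather than merely the almost-everywhere statements, which is handled by the weak-continuity refinement described above.
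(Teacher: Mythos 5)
The paper does not prove this statement at all: it is the classical 1934 existence theorem, quoted with a citation to \cite{Le} as background for Definition \ref{weakLerayHopf}. Your Galerkin scheme is therefore necessarily a different route; it is the standard Hopf-style proof rather than Leray's original one (which mollifies the transport velocity, solves the regularized system by fixed point, and passes to the limit), and it is essentially correct. The main steps --- the energy identity for $v^m$ and the resulting uniform bound in $\mathcal{L}(S)$, local Aubin--Lions compactness, passage to the limit in the quadratic term via the $L_{\frac{10}{3}}(Q_S)$ bound, weak lower semicontinuity for (\ref{venergyineqLe}), and the upgrade of the initial condition to (\ref{vinitialconditionLe}) using $v\in C_w([0,S];L_2)$ together with $\limsup_{t\to 0^+}\|v(\cdot,t)\|_{L_2}\leq\|u_0\|_{L_2}$ --- are all sound. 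One point deserves care on the whole space: your estimate $|(\partial_t v^m,\phi)_{L_2}|\leq(\dots)\|\nabla\phi\|_{L_2}$ is established only for $\phi\in H_m$, and promoting it to a bound on $\partial_t v^m$ in $L_{\frac43}(0,S;(\stackrel{\circ} J{^1_2})^{*})$ requires $P_m$ to be bounded on $\stackrel{\circ} J{^1_2}$ uniformly in $m$; this is automatic for the Stokes eigenbasis on a bounded domain but not for an arbitrary $L_2$-orthonormal basis of $C^\infty_{0,0}(\mathbb{R}^3)$. The standard fix preserves your structure: for each fixed $k$ the Galerkin ODE gives equicontinuity of $t\mapsto(v^m(\cdot,t),a_k)_{L_2}$ for all $m\geq k$, and this, combined with the uniform energy bound and the Friedrichs--Ehrling inequality, already yields $v^m\to v$ in $L_2(B_R\times ]0,S[)$, which is all that is used downstream; alternatively one can work on invading balls or on Leray's mollified system.
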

There are two big open problems concerning weak Leray-Hopf solutions regarding uniqueness and regularity. Many regularity criteria exist for weak Leray-Hopf solutions.  It was shown by Leray in \cite{Le} that if $v$ is a weak Leray-Hopf solution
 with sufficiently regular initial data and finite blow up time $T$, then  there exists a $C(p)>0$ such that for all $3<p\leq \infty$:
 \begin{equation}\label{Leraynecessary}
 \|v(\cdot,t)\|_{L_p}\geq \frac{c(p)}{(T-t)^{\frac{1}{2}(1-\frac{3}{p})}}.
 \end{equation}
 The case $p=3$ stood long open. It was shown in \cite{ESS2003} that if $v$ is a weak Leray-Hopf solution
 with sufficiently regular initial data and finite blow up time $T$, then necessarily
\begin{equation}\label{ESS2003}
\limsup_{t\uparrow T}\|v(\cdot,t)\|_{L_{3}(\mathbb{R}^3)}.
\end{equation}
The proof in \cite{ESS2003} is by a contradiction argument, involving a rescaling procedure producing a non trivial ancient solution and a Liouville theorem based on backward uniqueness for parabolic equations.
For an alternative approach to this type of regularity criteria, we refer to \cite{kenigkoch}, \cite{gkp13} and \cite{gkp14}.

The criteria (\ref{ESS2003}) was made more precise in \cite{Ser12}, where it was shown that if $T$ is a finite blow up time then necessarily
\begin{equation}\label{sereginsidentity}
\lim_{t\uparrow T}\|v(\cdot,t)\|_{L_{3}(\mathbb{R}^3)}.
\end{equation}
The proof in \cite{Ser12} uses  ideas in \cite{ESS2003}, as well as the fact that the local energy solutions of \cite{LR1} on a fixed time interval, with $L_{3}(\mathbb{R}^3)$ initial data, are continuous with respect to weak convergence in $L_{3}(\mathbb{R}^3)$ of the initial data.  

Unfortunately, it is not known if the notion of local energy solutions in \cite{LR1} carries over to the half space. Consequently, the proof of \cite{Ser12} doesn't apply to the case of weak Leray-Hopf solutions on $\mathbb{R}^3_{+} \times ]0,\infty[$. This was overcome  in \cite{barkerser16blowup}. In particular it was shown that the  $L^{3,q}(\mathbb{R}^3_{+})$ of a weak Leray-Hopf solution $v$ on $\mathbb{R}^3_{+}\times ]0,\infty[$ must tend to infinity with $3\leq q<\infty$. 

 A further refinement has been recently obtained in \cite{dallas}, who showed that if $T$ is a finite blow up time and $3<p<\infty$, then necessarily
$$\lim_{t\uparrow T}\|v(\cdot,t)\|_{\dot{B}_{p,p}^{s_p}(\mathbb{R}^3)}.$$ Note that for $3<p<\infty$
$$L_{3}(\mathbb{R}^3) \hookrightarrow L^{3,p}(\mathbb{R}^3) \hookrightarrow \dot{B}_{p,p}^{s_p}(\mathbb{R}^3)\hookrightarrow \dot{B}_{p,\infty}^{s_p}(\mathbb{R}^3).$$

Theorem \ref{Bpsolutionexistence} and the above Remark (specifically (\ref{stableestforu})-(\ref{mildest1recap})), together with ideas from \cite{dallas} and \cite{barkerser16blowup}, allow us to obtain a new of a regularity criteria for weak Leray-Hopf solutions to the Navier- Stokes equations.
\begin{theorem}\label{criticalBesovnormregularitycriteria}
Let $v$ be a global in time weak Leray-Hopf solution to the Navier-Stokes equations. Assume $0<T<\infty$ is such that for any $0<\epsilon<\frac{T}{2}$
\begin{equation}\label{Tblow up}
v\in L_{\infty}(\epsilon, T-\epsilon, L_{\infty}(\mathbb{R}^3)).
\end{equation}
Additionally assume that there exists an increasing sequence $t_{k}$ tending to $T$ such that
\begin{equation}\label{besovnormbounded}
\sup_{k}\|v(\cdot,t_{k})\|_{\dot{B}^{-1+\frac{3}{p}}_{p,\infty}(\mathbb{R}^3)}=M<\infty.
\end{equation}
Furthermore, assume that for any $\varphi \in C^{\infty}_{0}(\mathbb{R}^3)$
\begin{equation}\label{blowupprofileassumption}
\lim_{\lambda\rightarrow 0}\frac{1}{\lambda^2}\int\limits_{\mathbb{R}^3} v(y,T)\cdot \varphi({y}/{\lambda}) dy=0.
\end{equation}
The assumptions (\ref{Tblow up})-(\ref{blowupprofileassumption})  then imply that for any $0<\epsilon<T$
\begin{equation}\label{Tnotblowup}
v\in L_{\infty}(\epsilon,T; L_{\infty}(\mathbb{R}^3)).
\end{equation}
\end{theorem}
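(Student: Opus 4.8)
\textbf{Proof plan for Theorem \ref{criticalBesovnormregularitycriteria}.}

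The plan is to argue by contradiction via a rescaling and Liouville-type argument, in the spirit of \cite{ESS2003}, \cite{Ser12}, \cite{barkerser16blowup} and especially \cite{dallas}. Suppose $v$ blows up at $T$, i.e.\ $v \notin L_\infty(\epsilon, T; L_\infty(\mathbb{R}^3))$ for some $0<\epsilon<T$; then by the $\epsilon$-regularity theory there is a singular point $(x_0,T)$, and one can find a sequence of scales $\lambda_k = \sqrt{(T-t_k)/2} \to 0$ and a rescaling
\begin{equation*}
u^{(k)}(y,s) = \lambda_k v(x_0 + \lambda_k y,\, T + \lambda_k^2 s)
\end{equation*}
defined on $\mathbb{R}^3 \times ]-2, 0[$, so that $u^{(k)}$ solves Navier--Stokes there, has uniformly bounded $\dot{B}^{s_p}_{p,\infty}$ norm of $u^{(k)}(\cdot, -1)$ by \eqref{besovnormbounded} (after re-indexing the $t_k$), and carries a nontrivial amount of local energy near the origin at $s=0$ by the $\epsilon$-regularity lower bound. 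The assumption \eqref{Tblow up} that $v$ is bounded away from the time $T$ gives smoothness and decay of $u^{(k)}$ on $\mathbb{R}^3\times]-2, -\delta[$ for each fixed $\delta$, which is what lets one take limits cleanly on $\{s<0\}$.

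The key step where the new input enters is to produce a limiting \emph{ancient} solution $u$ on $\mathbb{R}^3 \times ]-\infty, 0[$ (or at least on a fixed slab) that is nontrivial, lies in $L_\infty$ for $s<0$ in suitable local spaces, and whose initial trace at the base of the slab behaves like a blow-up profile. Here I would invoke Theorem \ref{Bpsolutionexistence} together with Remark \ref{stableestimates}: apply Proposition \ref{besovinfinityinterpolationtheorem} to split each $u^{(k)}(\cdot, -1) = \bar u_0^{N,(k)} + \widetilde u_0^{N,(k)}$ with $N = N(M, T)$ \emph{uniform in $k$}, write $u^{(k)} = W^{N,(k)} + w^{N,(k)}$ on the slab, where $W^{N,(k)}$ is the Besov mild solution with data $\bar u_0^{N,(k)}$ satisfying \eqref{mildest1recap} uniformly, and $w^{N,(k)}$ satisfies the uniform energy bound \eqref{stableestforu}. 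The uniform bounds \eqref{stableestforu}–\eqref{mildest1recap} give enough compactness (Aubin--Lions for the energy part, plus the smoothing/decay estimate \eqref{mildBesovestimate} for the Besov part) to extract $W^{N,(k)} \to W^N$, $w^{N,(k)} \to w^N$, hence $u^{(k)} \to u := W^N + w^N$, a solution on the slab; the $\epsilon$-regularity lower bound at $s=0$, which is scale-invariant, passes to the limit and forces $u \not\equiv 0$. Iterating this construction over a sequence of slabs $]-2^m, 0[$ (using that $\|v(\cdot,t_k)\|_{\dot B^{s_p}_{p,\infty}}$ is scale-invariant so the hypothesis \eqref{besovnormbounded} survives each rescaling) yields a genuine ancient solution.

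The final step is to show this ancient solution must vanish, contradicting $u\not\equiv 0$. This is where \eqref{blowupprofileassumption} is used: the weak-$*$ limit of the rescaled initial data $\lambda^{-1} v(\lambda \cdot + x_0, T)$ is precisely the object whose pairing against test functions \eqref{blowupprofileassumption} kills, so the ancient solution $u$ has zero initial trace in the appropriate weak sense at the relevant time level, and combined with the global energy-type bound on $w^N$ inherited from \eqref{stableestforu} one gets that $u$ is in the energy class (bounded as $s\to -\infty$ after subtracting the decaying Besov part) and vanishes at a time slice; then backward uniqueness and unique continuation for the heat operator with a drift --- exactly the ESS-type Liouville theorem of \cite{ESS2003}, as adapted in \cite{barkerser16blowup} and \cite{dallas} --- forces $u \equiv 0$. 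The main obstacle, and the place requiring real care, is the compactness step: one must check that the splitting constant $N$ can be chosen uniformly in $k$ (which it can, since it depends only on $M$ and $T$ by Theorem \ref{Bpsolutionexistence}), and that the Besov mild solutions $W^{N,(k)}$ --- which are only controlled in a scaling-critical $L_{p_2}$-type norm with a time weight --- do not concentrate or escape to spatial infinity in a way that destroys the strong convergence of the nonlinear term $v\cdot\nabla v$; handling the cross terms $W^N \otimes w^N$ in \eqref{perturbdirectsystemBp} in the limit is the technical heart of the argument, and is precisely what the decay estimate \eqref{mildBesovestimate} with its favorable power of $t$ is designed to control.
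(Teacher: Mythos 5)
Your plan follows essentially the same route as the paper, which gives no detailed argument of its own but states that once Theorem \ref{Bpsolutionexistence} and the uniform bounds (\ref{stableestforu})--(\ref{mildest1recap}) are available, the proof is completed by the verbatim rescaling/compactness/backward-uniqueness arguments of \cite{dallas} (Theorem 1.1 there). Your sketch is a faithful reconstruction of that strategy, correctly identifying the uniform-in-$k$ splitting and the role of (\ref{blowupprofileassumption}) in killing the trace of the limiting ancient solution; the only quibbles are cosmetic (the rescaled initial slice sits at $s=-2$ rather than $s=-1$ for the stated choice of $\lambda_k$, and the profile assumption as written is centred at the origin, i.e.\ at the presumed singular point).
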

 Once we have Theorem \ref{Bpsolutionexistence} and  Remark \ref{stableestimates} (specifically (\ref{stableestforu})-(\ref{mildest1recap})) the proof the above refined regularity criteria can be completed by verbatim arguments of \cite{dallas}. \footnote{Specifically, Theorem 1.1 of \cite{dallas}.}

It should be noted that removing the assumption (\ref{blowupprofileassumption}) in the above Theorem is a challenging open problem. A positive resolution would provide regularity \footnote{By regularity of $v$ at time $T$, we mean $v\in L_{\infty}(T-\delta,T; L_{\infty}(\mathbb{R}^3))$ for some $0<\delta<T/2$} of $v$ at time $T$ if $$\sup_{x\in\mathbb{R}^3,\,0<t<T}{(|x|+(T-t)^{\frac{1}{2}}})|v(x,t)|<\infty.$$
 For axisymmteric solutions of the Navier-Stokes equations, Type I blow up has been ruled out. See \cite{chenI}-\cite{chenII}, \cite{kochliouville} and \cite{sersve2009}.
\end{subsubsection}

\end{subsection}

\setcounter{equation}{0}
\section{Preliminaries}
\subsection{Relevant Function Spaces}

 We first introduce the frequency cut off operators of the Littlewood-Paley theory. The definitions we use are contained in \cite{bahourichemindanchin}. For a tempered distribution $f$, let  $\mathcal{F}(f)$ denote its Fourier transform.  Let $C$ be the annulus $$\{\xi\in\mathbb{R}^3: 3/4\leq|\xi|\leq 8/3\}.$$
Let $\chi\in C_{0}^{\infty}(B(4/3))$ and $\varphi\in C_{0}^{\infty}(C)$  be such that
\begin{equation}\label{valuesbetween0and1}
\forall \xi\in\mathbb{R}^3,\,\,0\leq \chi(\xi), \varphi(\xi)\leq 1 ,
\end{equation}
\begin{equation}\label{dyadicpartition}
\forall \xi\in\mathbb{R}^3,\,\,\chi(\xi)+\sum_{j\geq 0} \varphi(2^{-j}\xi)=1
\end{equation}
and
\begin{equation}\label{dyadicpartition.1}
\forall \xi\in\mathbb{R}^3\setminus\{0\},\,\,\sum_{j\in\mathbb{Z}} \varphi(2^{-j}\xi)=1.
\end{equation}
For $a$ being a tempered distribution, let us define for $j\in\mathbb{Z}$:
\begin{equation}\label{dyadicblocks}
\dot{\Delta}_j a:=\mathcal{F}^{-1}(\varphi(2^{-j}\xi)\mathcal{F}(a))\,\,\textrm{and}\,\, \dot{S}_j a:=\mathcal{F}^{-1}(\chi(2^{-j}\xi)\mathcal{F}(a)).
\end{equation}
Now we are in a position to define the homogeneous Besov spaces on $\mathbb{R}^3$. Let $s\in\mathbb{R}$ and $(p,q)\in [1,\infty]\times [1,\infty]$. Then $\dot{B}^{s}_{p,q}(\mathbb{R}^3)$ is the subspace of tempered distributions such that
\begin{equation}\label{Besovdef1}
\lim_{j\rightarrow-\infty} \|\dot{S}_{j} u\|_{L_{\infty}(\mathbb{R}^3)}=0,
\end{equation}
\begin{equation}\label{Besovdef2}
\|u\|_{\dot{B}^{s}_{p,q}(\mathbb{R}^3)}:= \Big(\sum_{j\in\mathbb{Z}}2^{jsq}\|\dot{\Delta}_{j} u\|_{L_p(\mathbb{R}^3)}^{q}\Big)^{\frac{1}{q}}.
\end{equation}
\begin{remark}\label{besovremark1}
 This definition provides a Banach space if $s<\frac{3}{p}$, see \cite{bahourichemindanchin}.
\end{remark}
\begin{remark}\label{besovremark2}
It is known that if $1\leq q_{1}\leq q_{2}\leq\infty$, $1\leq p_1\leq p_2\leq\infty$ and $s\in\mathbb{R}$, then
$$\dot{B}^{s}_{p_1,q_{1}}(\mathbb{R}^3)\hookrightarrow\dot{B}^{s-3(\frac{1}{p_1}-\frac{1}{p_2})}_{p_2,q_{2}}(\mathbb{R}^3).$$
\end{remark}
\begin{remark}\label{besovremark3}
It is known that for $s=-2s_{1}<0$ and $p,q\in [1,\infty]$, the norm can be characterised by the heat flow. Namely there exists a $C>1$ such that for all $u\in\dot{B}^{-2s_{1}}_{p,q}(\mathbb{R}^3)$:
$$C^{-1}\|u\|_{\dot{B}^{-2s_{1}}_{p,q}(\mathbb{R}^3)}\leq \|\|t^{s_1} S(t)u\|_{L_{p}(\mathbb{R}^3)}\|_{L_{q}(\frac{dt}{t})}\leq C\|u\|_{\dot{B}^{-2s_{1}}_{p,q}(\mathbb{R}^3)}.$$  Here, $$S(t)u(x):=\Gamma(\cdot,t)\star u$$
where $\Gamma(x,t)$ is the kernel for the heat flow in $\mathbb{R}^3$.
\end{remark}
We will also need the following Proposition, whose statement and proof can be found in \cite{bahourichemindanchin} (Proposition 2.22 there) for example. In the Proposition below we use the notation
\begin{equation}\label{Sh}
\mathcal{S}_{h}^{'}:=\{ \textrm{ tempered\,\,distributions}\,\, u\textrm{\,\,\,such\,\,that\,\,} \lim_{j\rightarrow -\infty}\|S_{j}u\|_{L_{\infty}(\mathbb{R}^3)}=0\}.
\end{equation}
\begin{pro}\label{interpolativeinequalitybahourichemindanchin}
A constant $C$ exists with the following properties. If $s_{1}$ and $s_{2}$ are real numbers such that $s_{1}<s_{2}$ and $\theta\in ]0,1[$, then we have, for any $p\in [1,\infty]$ and any $u\in \mathcal{S}_{h}^{'}$,
\begin{equation}\label{interpolationactual}
\|u\|_{\dot{B}_{p,1}^{\theta s_{1}+(1-\theta)s_{2}}(\mathbb{R}^3)}\leq \frac{C}{s_2-s_1}\Big(\frac{1}{\theta}+\frac{1}{1-\theta}\Big)\|u\|_{\dot{B}_{p,\infty}^{s_1}(\mathbb{R}^3)}^{\theta}\|u\|_{\dot{B}_{p,\infty}^{s_2}(\mathbb{R}^3)}^{1-\theta}.
\end{equation}
\end{pro}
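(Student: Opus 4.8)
The plan is to prove the inequality directly from the Littlewood--Paley definition (\ref{Besovdef2}), using only the two elementary per-block bounds supplied by the $\dot{B}^{s_1}_{p,\infty}$ and $\dot{B}^{s_2}_{p,\infty}$ norms. Write $s:=\theta s_1+(1-\theta)s_2$ and $\delta:=s_2-s_1>0$, and abbreviate $A:=\|u\|_{\dot{B}^{s_1}_{p,\infty}}$, $B:=\|u\|_{\dot{B}^{s_2}_{p,\infty}}$. I would start from
\[
\|u\|_{\dot{B}^{s}_{p,1}}=\sum_{j\in\mathbb{Z}}2^{js}\|\dot{\Delta}_j u\|_{L_p}.
\]
Since $\|u\|_{\dot{B}^{s_i}_{p,\infty}}=\sup_j 2^{js_i}\|\dot{\Delta}_j u\|_{L_p}$, every block satisfies both $\|\dot{\Delta}_j u\|_{L_p}\le 2^{-js_1}A$ and $\|\dot{\Delta}_j u\|_{L_p}\le 2^{-js_2}B$. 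Using $s-s_1=(1-\theta)\delta>0$ and $s-s_2=-\theta\delta<0$, the $j$-th summand is therefore bounded by $\min\!\left(2^{(1-\theta)\delta j}A,\,2^{-\theta\delta j}B\right)$.

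The key point is that the first expression decays as $j\to-\infty$ and the second as $j\to+\infty$, so each is useful on the appropriate half of the frequency axis, and the two cross at the balancing frequency $j_{*}:=\delta^{-1}\log_2(B/A)$. Accordingly I would split the sum at the integer $N:=\lfloor j_{*}\rfloor$, keeping the low-frequency bound for $j\le N$ and the high-frequency bound for $j>N$, and sum the resulting geometric series:
\[
\|u\|_{\dot{B}^{s}_{p,1}}\le A\sum_{j\le N}2^{(1-\theta)\delta j}+B\sum_{j>N}2^{-\theta\delta j}.
\]
Each series equals its largest term times the geometric factor $\big(1-2^{-(1-\theta)\delta}\big)^{-1}$, respectively $\big(1-2^{-\theta\delta}\big)^{-1}$; because $N\le j_{*}<N+1$ the largest terms are at most $2^{(1-\theta)\delta j_{*}}A=(B/A)^{1-\theta}A=A^{\theta}B^{1-\theta}$ and $2^{-\theta\delta j_{*}}B=A^{\theta}B^{1-\theta}$. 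This is exactly where the scaling-invariant (homogeneous) structure of the blocks manufactures the interpolated quantity $A^{\theta}B^{1-\theta}=\|u\|^{\theta}_{\dot{B}^{s_1}_{p,\infty}}\|u\|^{1-\theta}_{\dot{B}^{s_2}_{p,\infty}}$.

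It then remains to bound the two geometric prefactors by $\tfrac{C}{\delta}\big(\tfrac1\theta+\tfrac1{1-\theta}\big)$. For this I would invoke the elementary estimate $\big(1-2^{-y}\big)^{-1}\le 1+\big((\ln 2)\,y\big)^{-1}$, which is equivalent to $(\ln 2)y\le e^{(\ln 2)y}-1$ and hence to $1+z\le e^{z}$; applying it with $y=(1-\theta)\delta$ and $y=\theta\delta$ and adding gives the prefactor sum a bound of the desired shape $\tfrac{C}{s_2-s_1}\big(\tfrac1\theta+\tfrac1{1-\theta}\big)$. The hypothesis $u\in\mathcal{S}_{h}^{'}$ enters only to guarantee that the low-frequency tail of the sum converges and that $u$ is genuinely reconstructed from its dyadic blocks, so that the manipulations above are legitimate.

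I do not expect a serious conceptual obstacle: the argument is the standard ``split the Littlewood--Paley sum at the balancing frequency and sum two geometric series'' device. The one place demanding care is the final constant bookkeeping, where one must track the dependence on $\theta$ and on $s_2-s_1$ \emph{uniformly} --- in particular reproducing the correct blow-up $\sim(s_2-s_1)^{-1}$ as $s_1\uparrow s_2$ and $\sim\theta^{-1},(1-\theta)^{-1}$ as $\theta$ approaches the endpoints --- rather than settling for a merely qualitative finite constant; the additive universal term coming from the large-gap regime is absorbed using $\tfrac1\theta+\tfrac1{1-\theta}\ge 4$.
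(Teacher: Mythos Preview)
The paper does not give its own proof of this proposition; it cites Bahouri--Chemin--Danchin (Proposition 2.22), and your argument is exactly the standard proof found there: split the $\ell^1$ sum over dyadic blocks at the balancing index $j_*=\delta^{-1}\log_2(B/A)$, use the $\dot{B}^{s_1}_{p,\infty}$ bound below $j_*$ and the $\dot{B}^{s_2}_{p,\infty}$ bound above, and sum the two geometric series. So the approach is correct and identical to the reference.

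One caveat on your last paragraph. The additive universal term $2$ in your prefactor cannot be absorbed into $\tfrac{C}{s_2-s_1}\big(\tfrac{1}{\theta}+\tfrac{1}{1-\theta}\big)$ \emph{uniformly} in $\delta=s_2-s_1$: as $\delta\to\infty$ your geometric prefactor $\big(1-2^{-(1-\theta)\delta}\big)^{-1}+\big(1-2^{-\theta\delta}\big)^{-1}$ tends to $2$, while the claimed constant tends to $0$, and a function with Fourier support in a single dyadic shell shows the inequality with the stated constant actually fails for large $\delta$. This is a known slip in the constant of the quoted statement rather than a flaw in your method; your estimate $(1-2^{-y})^{-1}\le 1+((\ln 2)y)^{-1}$ gives the honest prefactor, and for every application in this paper (where $s_1,s_2$ are fixed) only a finite constant $C(s_1,s_2,\theta)$ is needed, which you have certainly produced.
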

\begin{pro}\label{weak*approx}
Let $u_{0}\in \B$ be divergence free, in the sense of  tempered distributions. 
Then there exists a  weakly divergence free sequence $u^{(k)}_{0}\in {L_3(\mathbb{R}^3)}$ such that 
$$u^{(k)}_{0}\stackrel{*}{\rightharpoonup} u_0$$ in $\B$.
\end{pro}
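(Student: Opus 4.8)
The plan is to produce the approximating sequence by a combination of frequency truncation and spatial decay improvement, so that each truncated object lands in $L_3(\mathbb{R}^3)$ while the sequence still tests correctly against the predual of $\B$. Concretely, I would first recall that $\B = \dot B^{-1/4}_{4,\infty}$ is the dual of $\dot B^{1/4}_{4/3,1}$, and that weak* convergence $u_0^{(k)}\stackrel{*}{\rightharpoonup}u_0$ means $\langle u_0^{(k)},\phi\rangle\to\langle u_0,\phi\rangle$ for every $\phi$ in this (separable) predual, together with a uniform bound $\sup_k\|u_0^{(k)}\|_{\B}<\infty$. So it suffices to find, for the fixed $u_0$, a sequence $u_0^{(k)}$ with $\sup_k\|u_0^{(k)}\|_{\B}\le C\|u_0\|_{\B}$, each $u_0^{(k)}\in L_3$, each divergence free, and $\langle u_0^{(k)},\phi\rangle\to\langle u_0,\phi\rangle$ on a dense subset of the predual (e.g. Schwartz functions with Fourier transform supported away from the origin, or divergence-free Schwartz fields — one must be a little careful here since $\B$ only sees the solenoidal part, but since $u_0$ is already divergence free this is harmless).

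The natural candidate is a Littlewood--Paley truncation: set $u_0^{(k)} := \sum_{|j|\le k}\dot\Delta_j u_0 = \dot S_{k+1}u_0 - \dot S_{-k}u_0$. This is automatically divergence free (the Fourier multipliers commute with the divergence), and $\|u_0^{(k)}\|_{\B}\le \|u_0\|_{\B}$ since we are only keeping a sub-collection of dyadic blocks (the $\ell^\infty$ norm over $j$ can only decrease). Testing against $\phi$ in the predual: $\langle u_0^{(k)},\phi\rangle = \langle u_0, \sum_{|j|\le k}\dot\Delta_j\phi\rangle$, and $\sum_{|j|\le k}\dot\Delta_j\phi\to\phi$ in $\dot B^{1/4}_{4/3,1}$ (this is exactly the statement that dyadic blocks converge in homogeneous Besov spaces with finite third index — a standard fact), so $\langle u_0^{(k)},\phi\rangle\to\langle u_0,\phi\rangle$. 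The remaining point, and the one that needs the most care, is membership in $L_3$. Each $\dot\Delta_j u_0$ is a $C^\infty$ function, and the finite sum $u_0^{(k)}$ has Fourier transform supported in an annulus $\{2^{-k-1}\lesssim|\xi|\lesssim 2^{k+2}\}$ bounded away from $0$ and $\infty$; by Bernstein-type estimates $\dot\Delta_j u_0\in L_p$ for all $p\in[1,\infty]$ once it is known to be in some $L_p$, but a priori for a rough distribution in $\B$ the individual blocks need not be integrable at infinity at the lowest frequencies.

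Hence the genuinely delicate step is the low-frequency tail: one must arrange enough spatial decay so that $u_0^{(k)}\in L_3$. I would handle this by an additional spatial localization after the frequency truncation — multiply by a cutoff $\eta(x/R_k)$ with $\eta\in C_0^\infty$, $\eta\equiv 1$ near the origin — and then restore the divergence-free condition by applying the Leray/Bogovskii projection (or by subtracting a suitable gradient correction supported on the annulus where $\nabla\eta$ lives). Since $u_0^{(k)}$ is a fixed band-limited smooth function with at most polynomial growth (band-limited tempered distributions are smooth with polynomially bounded derivatives), for $R_k$ large the truncated field $\eta(\cdot/R_k)u_0^{(k)}$ is compactly supported, hence in $L_3$, the divergence correction is also compactly supported and smooth hence in $L_3$, and letting $R_k\to\infty$ fast enough one recovers both the uniform $\B$ bound (the correction terms are small in $\B$ because $\nabla\eta(\cdot/R_k)$ carries a factor $R_k^{-1}$ and lives at frequencies $\lesssim 1$, which one estimates against the fixed band-limited profile) and the weak* convergence to $u_0$. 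Assembling a diagonal sequence from the two limits $k\to\infty$ (frequency) and $R_k\to\infty$ (space) then gives the claimed sequence. The main obstacle is precisely controlling the divergence-correction terms in the $\B$ norm uniformly while simultaneously forcing $L_3$ membership; everything else is bookkeeping with Littlewood--Paley and Bernstein estimates that are available from \cite{bahourichemindanchin}.
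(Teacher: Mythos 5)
Your first half coincides with the paper's: both start from the dyadic truncation $u_{0,<k}=\sum_{|j|\le k}\dot{\Delta}_j u_0$, observe the uniform bound $\|u_{0,<k}\|_{\B}\le C\|u_0\|_{\B}$ (quasi-orthogonality of the blocks), and get weak* convergence by testing against the predual $\dot{B}^{1/4}_{4/3,1}$, where the dyadic partial sums of the test function converge in norm. The divergence is in how you land in $L_3$, and that is where your argument has a genuine gap. You correctly identify that $u_{0,<k}$, whose blocks are only in $L_4$, need not lie in $L_3$ (and note this is not only a low-frequency issue: \emph{every} block of a general element of $\B$ is merely in $L_p$ for $p\ge 4$, since Bernstein only raises the exponent). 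But the fix you propose --- a spatial cutoff $\eta(\cdot/R_k)$ followed by a divergence correction --- is precisely the step you do not carry out, and the one quantitative claim you make about it does not survive scrutiny: the correction solving $\mathrm{div}\,w=-R_k^{-1}(\nabla\eta)(\cdot/R_k)\cdot u_{0,<k}$ on an annulus of radius $R_k$ picks the factor $R_k$ back up through the Poincar\'e/Bogovskii constant (or, if you use $\nabla\Delta^{-1}\mathrm{div}$, through the Sobolev exponent), and a direct computation from $u_{0,<k}\in L_4$ gives $\|w\|_{L_3}\lesssim R_k^{1/4}\|u_{0,<k}\|_{L_4}$, which grows. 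So ``small because $\nabla\eta(\cdot/R_k)$ carries a factor $R_k^{-1}$'' is not a proof. Similarly, the uniform $\B$-bound for the product $\eta(\cdot/R_k)u_{0,<k}$ --- a cutoff times a negative-regularity distribution --- requires a paraproduct/commutator analysis together with a careful calibration of $R_k$ against $k$ (the low frequencies created by the cutoff and the frequencies inside the band pull $R_k$ in opposite directions); this is plausible but it is the entire content of the proof and is missing.

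The paper avoids all of this with a softer device. Since $u_{0,<k}$ has only finitely many nonzero blocks, it lies in $\dot{B}^{-1/4}_{4,1}$, where (Remark 2.2, p.~69 of \cite{bahourichemindanchin}) Schwartz functions with Fourier transform supported away from the origin are dense; one therefore picks a Schwartz function $g^{(k)}$ with $\|g^{(k)}-u_{0,<k}\|_{\B}<1/k$ and sets $u_0^{(k)}:=\mathbb{P}g^{(k)}$. Membership in $L_3$ is then automatic, divergence-freeness comes from the Leray projector (which fixes $u_{0,<k}$ because $u_0$ is solenoidal, and is bounded on $\B$), and weak* convergence follows from the triangle inequality. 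If you wish to keep the physical-space route you must actually establish the uniform $\B$-bound and control the divergence correction; otherwise the density argument is the efficient way through.
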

\begin{proof}
Next, it is well known that for any $ u_0\in \B(\mathbb{R}^3)$ we have that
$u_{0,<|k|}:=\sum_{j=-k}^{k} \dot{\Delta}_{j} u_0$ converges to $u_0$ in the sense of tempered distributions. 
Furthermore, we have  that $\dot{\Delta}_{j}\dot{\Delta}_{j'} u_0=0$ if $|j-j'|>1.$ Thus,
\begin{equation}\label{boundedinB}
\|u_{0,<|k|}\|_{\B}\leq C \|u_0\|_{\B}
\end{equation} 
and \begin{equation}\label{othogonality}
\dot{\Delta}_{N}u_{0,<|k|}=0\,\,\,\,\textrm{if}\,\,\,\,|N|>k+1.
\end{equation}
It then follows from \cite{bahourichemindanchin} \footnote{ Specifically, Remark 2.2 pg 69 of \cite{bahourichemindanchin}. } 
that there exists a Schwartz function $g^{(k)}$, whose Fourier transform is supported away from the origin, such that
\begin{equation}\label{u0truncated}
\|g^{(k)}-u_{0,<|k|}\|_{\B}<\frac{1}{k}.
\end{equation}
Define $u_{0}^{(k)}$ to be the Leray Projector $\mathbb{P}$ applied to $g^{(k)}$, which is continuous on $\B(\mathbb{R}^3)$. Obviously since $g^{(k)}$ is Schwartz, we have that $u_{0}^{(k)}$  is a weakly divergence free function in $L_{3}(\mathbb{R}^3)$. Since $u_0$ is divergence free in the sense of tempered distributions, we have that
$$\mathbb{P} u_{0,<|k|}=u_{0,<|k|}.$$
Thus, 
\begin{equation}\label{u_0approximation}
\|u^{(k)}_0-u_{0,<|k|}\|_{\B}=\|\mathbb{P}(g^{(k)}_0-u_{0,<|k|})\|_{\B}<\frac{C}{k}.
\end{equation}
So for any Schwartz function $\varphi$ we have
$$|<u^{(k)}_0-u_{0}, \varphi>|\leq |<u^{(k)}_0-u_{0<|k|}, \varphi>|+
|<u_0-u_{0<|k|}, \varphi>|\leq $$$$\leq \frac{C}{k} \|\varphi\|_{\dot{B}^{\frac{1}{4}}_{\frac{4}{3},1}}+|<u_0-u_{0<|k|}, \varphi>|.$$
Thus, $u_0^{(k)}$ satisfies the desired properties of the Proposition. 

\end{proof}

\begin{pro}\label{semigroupweakL3}
Let $u_0\in\B(\mathbb{R}^3)$. Then we have 
\begin{equation}\label{L3inftybdd}
\sup_{0<t<\infty} t^{\frac{1}{8}}\|S(t)u_{0}\|_{\B}\leq C\|u_{0}\|_{\B}.
\end{equation}
Moreover for $4\leq r<\infty$, $m,k\in \mathbb{N}$:
\begin{equation}\label{semigroupsolonnikovest}
\|\partial^m_t\nabla^k S(t)u_0\|_{L_{r}}\leq \frac{C(m,k,r)\|u_{0}\|_{\B}}{t^{m+\frac k 2+{\frac 1 2}(1-\frac{3}{r})}}.
\end{equation}

\end{pro}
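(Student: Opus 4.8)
The plan is to prove Proposition~\ref{semigroupweakL3} using the heat-flow characterisation of the Besov norm (Remark~\ref{besovremark3}) together with the standard $L_p$--$L_q$ smoothing estimates for the heat semigroup. First I would establish \eqref{L3inftybdd}. By Remark~\ref{besovremark3} applied with $s_1=\tfrac18$, the norm $\|w\|_{\B}$ is comparable to $\sup_{\tau>0}\tau^{1/8}\|S(\tau)w\|_{L_4}$ (the $L^\infty(dt/t)$ case of the characterisation). Hence, using the semigroup property $S(t+\tau)u_0=S(\tau)S(t)u_0$,
\begin{equation}\label{planstep1}
\|S(t)u_0\|_{\B}\sim \sup_{\tau>0}\tau^{\frac18}\|S(\tau)S(t)u_0\|_{L_4}=\sup_{\tau>0}\tau^{\frac18}\|S(t+\tau)u_0\|_{L_4}.
\end{equation}
Writing $\sigma=t+\tau\geq t$ and using $\tau^{1/8}\le \sigma^{1/8}$ when $\tau\le\sigma$, the supremum on the right is bounded by $\sup_{\sigma\geq t}\sigma^{1/8}\|S(\sigma)u_0\|_{L_4}\le \|u_0\|_{\B}$; multiplying by $t^{1/8}$ and noting $t^{1/8}\le (t+\tau)^{1/8}$ only helps, one extracts the factor $t^{1/8}$ from $\tau^{1/8}\le (t+\tau)^{1/8}$ in the regime $\tau\le t$ and directly from $\tau=t$ otherwise; a clean way is: $t^{1/8}\tau^{1/8}\le (t+\tau)^{1/4}$, so $t^{1/8}\|S(t)u_0\|_{\B}\lesssim \sup_{\tau>0}(t+\tau)^{1/4}\|S(t+\tau)u_0\|_{L_4}^{?}$ — instead, more simply, bound $t^{1/8}\|S(t)u_0\|_{\B}=t^{1/8}\sup_\tau \tau^{1/8}\|S(t+\tau)u_0\|_{L_4}\le \sup_\tau (t+\tau)^{1/8}\cdot t^{1/8}/ (t+\tau)^{1/8}\cdot\tau^{1/8}\|S(t+\tau)u_0\|_{L_4}\le \|u_0\|_{\B}$ since $t^{1/8}\tau^{1/8}\le(t+\tau)^{1/8}\cdot\max(t,\tau)^{1/8}$ and one checks $t^{1/8}\tau^{1/8}\le (t+\tau)^{1/8}\cdot(t+\tau)^{1/8}=(t+\tau)^{1/4}$, giving $t^{1/8}\|S(t)u_0\|_{\B}\le \sup_\tau (t+\tau)^{1/8}\|S(t+\tau)u_0\|_{L_4}\le\|u_0\|_{\B}$, as desired after rescaling $s_1$; I will present this elementary estimate cleanly in the write-up.

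Next I would prove \eqref{semigroupsolonnikovest}. The idea is to split $S(t)u_0=S(t/2)\,S(t/2)u_0$, apply the heat-flow characterisation to control $\|S(t/2)u_0\|_{L_4}\lesssim t^{-1/8}\|u_0\|_{\B}$ (again Remark~\ref{besovremark3} with $r=4$, or \eqref{optimalBesovembedding}-type reasoning for general $r$), and then use the classical kernel estimates
\begin{equation}\label{planstep2}
\|\partial_t^m\nabla^k S(\sigma)f\|_{L_r}\le \frac{C(m,k,r,\rho)}{\sigma^{m+\frac k2+\frac32(\frac1\rho-\frac1r)}}\|f\|_{L_\rho},\qquad 1\le\rho\le r\le\infty,
\end{equation}
with $\rho=4$, $\sigma=t/2$, $f=S(t/2)u_0$. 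Combining the two bounds yields, for $4\le r<\infty$,
\begin{equation}\label{planstep3}
\|\partial_t^m\nabla^k S(t)u_0\|_{L_r}\lesssim t^{-(m+\frac k2+\frac32(\frac14-\frac1r))}\cdot t^{-\frac18}\|u_0\|_{\B}=t^{-(m+\frac k2+\frac12(1-\frac3r))}\|u_0\|_{\B},
\end{equation}
since $\tfrac32\cdot\tfrac14+\tfrac18=\tfrac12$ and $\tfrac32\cdot\tfrac1r=\tfrac{3}{2r}$, which matches the exponent $m+\tfrac k2+\tfrac12(1-\tfrac3r)$ in \eqref{semigroupsolonnikovest}. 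For $r=4$ the factor $t^{-3/2(1/4-1/r)}$ is absent and one directly has the $t^{-1/8}$ decay from the Besov characterisation.

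I do not expect a serious obstacle here; this is essentially a bookkeeping exercise with well-known heat-semigroup facts, and the main point to be careful about is the elementary manipulation of powers of $t$ and $\tau$ in \eqref{planstep1} to land exactly the factor $t^{1/8}$ on the left-hand side of \eqref{L3inftybdd} (equivalently, verifying that $\B$ is preserved by $S(t)$ with the sharp scaling rate). A secondary point is to justify the general-$r$ case of the characterisation underlying $\|S(t/2)u_0\|_{L_r}$: for $r>4$ one first maps $\B\hookrightarrow \dot B^{-1+3/r}_{r,\infty}$ by Remark~\ref{besovremark2} (Bernstein/Sobolev embedding of Besov spaces) and then applies the heat-flow characterisation of $\dot B^{-1+3/r}_{r,\infty}$, or alternatively inserts an extra semigroup factor and uses \eqref{planstep2} with $\rho=4$ directly, which is the route taken in \eqref{planstep3}. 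Either way the estimates are standard and the proof is short.
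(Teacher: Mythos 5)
The paper states Proposition \ref{semigroupweakL3} without proof, treating it as a standard semigroup fact, so your argument can only be judged on its own terms. Your treatment of (\ref{semigroupsolonnikovest}) is correct and is surely the intended route: factor $S(t)u_0=S(t/2)\bigl(S(t/2)u_0\bigr)$, bound $\|S(t/2)u_0\|_{L_4}\leq Ct^{-1/8}\|u_0\|_{\B}$ by the $L_\infty(dt/t)$ case of the heat--flow characterisation in Remark \ref{besovremark3}, convert $\partial_t^m$ into $\Delta^m$, and apply the $L_4\to L_r$ smoothing estimate; the exponents combine to $m+\tfrac k2+\tfrac12(1-\tfrac 3r)$ exactly as you compute.

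The genuine gap is in your proof of (\ref{L3inftybdd}). The chain you end with requires the elementary inequality $t^{1/8}\tau^{1/8}\leq(t+\tau)^{1/8}$, which is false (take $t=\tau\geq 1$); your own intermediate observation $t^{1/8}\tau^{1/8}\leq(t+\tau)^{1/4}$ is the best available and lands on the wrong power. No repair is possible because the statement as printed is false: for $u_0$ homogeneous of degree $-1$ (so $u_{0\lambda}:=\lambda u_0(\lambda\cdot)=u_0$), the identity $S(t)u_{0\lambda}=\lambda\bigl(S(\lambda^2t)u_0\bigr)(\lambda\cdot)$ and the criticality of the $\B$-norm give $\|S(t)u_0\|_{\B}=\|S(1)u_0\|_{\B}=:c_0>0$ for every $t$, whence $\sup_{t>0}t^{1/8}\|S(t)u_0\|_{\B}=\infty$. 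What your manipulation does correctly prove is the statement \emph{without} the factor $t^{1/8}$, namely $\|S(t)u_0\|_{\B}\leq C\sup_{\tau>0}\tau^{1/8}\|S(t+\tau)u_0\|_{L_4}\leq C\sup_{\sigma>t}\sigma^{1/8}\|S(\sigma)u_0\|_{L_4}\leq C'\|u_0\|_{\B}$. What the paper actually uses later --- see (\ref{VkweakL3est}), (\ref{V<Nsemigroupest}) and (\ref{VbarupperindexL4}) --- is $\sup_{t>0}t^{1/8}\|S(t)u_0\|_{L_4}\leq C\|u_0\|_{\B}$; that is, the left-hand norm in (\ref{L3inftybdd}) should be read as $L_4$ rather than $\B$, in which case the estimate is precisely the $q=\infty$ case of Remark \ref{besovremark3} (equivalently the computation (\ref{optimalbesov})--(\ref{optimalBesovembedding}) in the introduction) and needs no further argument. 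You should prove that corrected version rather than chase the printed one.
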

\setcounter{equation}{0}
\section{Decomposition of Homogeneous Besov Spaces}
Before proving Proposition \ref{besovinfinityinterpolationtheorem}, we take note of a useful Lemma presented in \cite{bahourichemindanchin} (specifically, Lemma 2.23 and Remark 2.24 in \cite{bahourichemindanchin}).
\begin{lemma}\label{bahouricheminbook}
Let $C^{'}$ be an annulus and let $(u^{(j)})_{j\in\mathbb{Z}}$ be a sequence of functions such that
\begin{equation}\label{conditionsupport}
\textrm{Supp}\, \mathcal{F}(u^{(j)})\subset 2^{j}C^{'} 
\end{equation}
and
\begin{equation}\label{conditionseriesbound}
\Big(\sum_{j\in\mathbb{Z}}2^{jsr}\|u^{(j)}\|_{L_p}^{r}\Big)^{\frac{1}{r}}<\infty.
\end{equation}
Moreover, assume in addition that 
\begin{equation}\label{indicescondition}
s<\frac{3}{p}.
\end{equation}
Then the following holds true.
The series $$\sum_{j\in\mathbb{Z}} u^{(j)}$$ converges (in the sense of tempered distributions) to some $u\in \dot{B}^{s}_{p,r}(\mathbb{R}^3)$, which satisfies the following estimate:
\begin{equation}\label{besovboundlimitfunction}
\|u\|_{\dot{B}^{s}_{p,r}}\leq C_s \Big(\sum_{j\in\mathbb{Z}}2^{jsr}\|u^{(j)}\|_{L_p}^{r}\Big)^{\frac{1}{r}}.
\end{equation}

\end{lemma}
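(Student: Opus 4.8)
The plan is to prove the two assertions separately: first that the series $\sum_{j\in\mathbb Z} u^{(j)}$ converges in $\mathcal S'(\mathbb R^3)$ to a tempered distribution $u$, and then that $u$ lies in $\dot B^s_{p,r}(\mathbb R^3)$ with the norm bound \eqref{besovboundlimitfunction}. Throughout I abbreviate $A:=\big(\sum_{j}2^{jsr}\|u^{(j)}\|_{L_p}^r\big)^{1/r}$, which is finite by \eqref{conditionseriesbound} and which dominates each individual term $2^{js}\|u^{(j)}\|_{L_p}$. The two tools I will lean on are Bernstein's inequality (legitimate since each $\mathcal F(u^{(j)})$ is supported in the annulus $2^jC'$ of size $\sim 2^j$) and the almost-orthogonality of the dyadic blocks.

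For convergence I split the sum at $j=0$. On the low-frequency tail $j<0$, Bernstein gives $\|u^{(j)}\|_{L_\infty}\leq C\,2^{3j/p}\|u^{(j)}\|_{L_p}=C\,2^{j(3/p-s)}\big(2^{js}\|u^{(j)}\|_{L_p}\big)$. Because \eqref{indicescondition} forces $3/p-s>0$, an application of Hölder's inequality in $j$ yields $\sum_{j<0}\|u^{(j)}\|_{L_\infty}\leq C_s A$, so this part converges absolutely in $L_\infty$, hence in $\mathcal S'$. On the high-frequency tail $j\geq 0$, I test against a fixed $\phi\in\mathcal S$. Using the reproducing identity $u^{(j)}=g_j*u^{(j)}$ with $g_j=2^{3j}h(2^j\cdot)$, where $h=\mathcal F^{-1}\tilde\varphi$ and $\tilde\varphi\in C_0^\infty$ equals $1$ on $C'$ and is supported in a slightly larger annulus, I transfer $g_j$ onto $\phi$ and estimate $\|\tilde g_j*\phi\|_{L_{p'}}$, where $\tilde g_j(x)=g_j(-x)$ and $1/p+1/p'=1$. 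Since $\mathcal F h=\tilde\varphi$ vanishes near the origin, $h$ has vanishing moments of every order; Taylor-expanding $\phi$ in $\tilde g_j*\phi(x)=\int h(-z)\,\phi(x-2^{-j}z)\,dz$ then produces a gain $\|\tilde g_j*\phi\|_{L_{p'}}\leq C_M 2^{-jM}$ for every $M$. Taking $M$ large and using $\|u^{(j)}\|_{L_p}\leq 2^{-js}A$, Hölder gives $|\langle u^{(j)},\phi\rangle|\leq C\,2^{-j(M+s)}A$, which is summable over $j\geq 0$. Hence the full series converges in $\mathcal S'$ to some $u$.

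For membership in $\dot B^s_{p,r}$ the key point is that, since $\mathcal F(u^{(j)})\subset 2^jC'$ and $\dot\Delta_k$ localises to $2^kC$, there is an integer $N_0$ depending only on $C$ and $C'$ with $\dot\Delta_k u^{(j)}=0$ whenever $|j-k|>N_0$. Thus $\dot\Delta_k u=\sum_{|j-k|\leq N_0}\dot\Delta_k u^{(j)}$ is a finite sum, and the uniform $L_p$-boundedness of $\dot\Delta_k$ gives, after absorbing $2^{(k-j)s}\leq 2^{N_0|s|}$, the bound $2^{ks}\|\dot\Delta_k u\|_{L_p}\leq C_s\sum_{|j-k|\leq N_0}2^{js}\|u^{(j)}\|_{L_p}$. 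Taking the $\ell^r$ norm in $k$ and applying Young's inequality for the convolution of the sequence $\big(2^{js}\|u^{(j)}\|_{L_p}\big)_j$ with the indicator of $\{|\cdot|\leq N_0\}$ produces $\|u\|_{\dot B^s_{p,r}}\leq C_s A$, which is \eqref{besovboundlimitfunction}. To confirm $u\in\mathcal S'_h$ (condition \eqref{Besovdef1}), I write $\dot S_j u=\sum_{k\leq j+N_0}\dot S_j u^{(k)}$ (spectral support again), and bound, by Young and Bernstein, $\|\dot S_j u\|_{L_\infty}\leq C\sum_{k\leq j+N_0}2^{k(3/p-s)}\big(2^{ks}\|u^{(k)}\|_{L_p}\big)$; as $3/p-s>0$ and the weighted terms are bounded, this tail tends to $0$ as $j\to-\infty$.

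The main obstacle is securing convergence in $\mathcal S'$, and in particular the interplay of the two tails. The low-frequency end is exactly where the hypothesis $s<3/p$ is indispensable: without it the partial sums would at best converge modulo polynomials, and $u$ would fail to be a well-defined element of the homogeneous space. The high-frequency end requires the moment-cancellation (Taylor expansion) argument to extract arbitrarily high negative powers of $2^j$ against a Schwartz test function. By contrast, the norm estimate itself, once convergence is in hand, is a routine consequence of the almost-orthogonality $\dot\Delta_k u^{(j)}=0$ for $|j-k|>N_0$ together with Young's inequality for sequences.
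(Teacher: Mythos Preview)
Your proof is correct and follows the standard route: Bernstein's inequality together with the hypothesis $s<3/p$ handles the low-frequency tail in $L_\infty$, the moment cancellation of the annular multiplier handles the high-frequency tail against Schwartz test functions, and the almost-orthogonality $\dot\Delta_k u^{(j)}=0$ for $|j-k|>N_0$ combined with Young's inequality on $\ell^r$ gives the norm bound and the $\mathcal S'_h$ condition.

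Note, however, that the paper does not supply its own proof of this lemma: it is quoted verbatim from \cite{bahourichemindanchin} (Lemma~2.23 and Remark~2.24 there) and used as a black box in the proof of Lemma~\ref{Decompgeneralinfinity} and Proposition~\ref{besovinfinityinterpolationtheorem}. Your argument is essentially the one found in that reference, so there is nothing to compare against in the present paper beyond confirming that your write-up matches the textbook proof.
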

Let us state a useful Lemma, proven in \cite{barker2017}, regarding decomposition of homogeneous Besov spaces.
\begin{pro}\label{Decompgeneral}
For $i=1,2,3$ let $p_{i}\in ]1,\infty[$, $s_i\in \mathbb{R}$ and $\theta\in ]0,1[$ be such that $s_1<s_0<s_2$ and $p_2<p_0<p_1$. In addition, assume the following relations hold:
\begin{equation}\label{sinterpolationrelation}
s_1(1-\theta)+\theta s_2=s_0,
\end{equation}
\begin{equation}\label{pinterpolationrelation}
\frac{1-\theta}{p_1}+\frac{\theta}{p_2}=\frac{1}{p_0},
\end{equation}
\begin{equation}\label{besovbanachcondition}
{s_i}<\frac{3}{p_i}.
\end{equation}
Suppose that $u_0\in \dot{B}^{{s_{0}}}_{p_0,p_0}(\mathbb{R}^3).$
Then for all $\epsilon>0$ there exists $u^{1,\epsilon}\in \dot{B}^{s_{1}}_{p_1,p_1}(\mathbb{R}^3)$, $u^{2,\epsilon}\in \dot{B}^{s_{2}}_{p_2,p_2}(\mathbb{R}^3)$ such that 
\begin{equation}\label{udecompgeneral}
u= u^{1,\epsilon}+u^{2,\epsilon},
\end{equation}
\begin{equation}\label{u_1estgeneral}
\|u^{1,\epsilon}\|_{\dot{B}^{s_{1}}_{p_1,p_1}}^{p_1}\leq \epsilon^{p_1-p_0} \|u_0\|_{\dot{B}^{s_{0}}_{p_0,p_0}}^{p_0},
\end{equation}
\begin{equation}\label{u_2estgeneral}
\|u^{2,\epsilon}\|_{\dot{B}^{s_{2}}_{p_2,p_2}}^{p_2}\leq C( p_0,p_1,p_2, \|\mathcal{F}^{-1}\varphi\|_{L_1})\epsilon^{p_2-p_0} \|u_0\|_{\dot{B}^{s_{0}}_{p_0,p_0}}^{p_0}
.\end{equation}
\end{pro}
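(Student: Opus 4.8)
\textbf{Proof proposal for Proposition \ref{Decompgeneral} (decomposition of homogeneous Besov spaces).}

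The plan is to decompose $u_0$ along its Littlewood--Paley blocks, splitting the dyadic frequencies at a threshold determined by $\epsilon$. Write $u^{(j)} := \dot{\Delta}_j u_0$, so that $u_0 = \sum_{j\in\mathbb{Z}} u^{(j)}$ in the sense of tempered distributions, with each $u^{(j)}$ frequency-supported in an annulus $2^j C'$. The starting point is the hypothesis $u_0 \in \dot{B}^{s_0}_{p_0,p_0}$, i.e. $\sum_j 2^{j s_0 p_0} \|u^{(j)}\|_{L_{p_0}}^{p_0} < \infty$. First I would use Bernstein's inequality: since each $u^{(j)}$ has frequency support in $2^j C'$, we have $\|u^{(j)}\|_{L_{p_1}} \lesssim 2^{3j(\frac{1}{p_0}-\frac{1}{p_1})}\|u^{(j)}\|_{L_{p_0}}$ for $p_0 < p_1$, and conversely (again by Bernstein on an annulus, in the other direction of exponents) $\|u^{(j)}\|_{L_{p_2}} \lesssim 2^{3j(\frac{1}{p_0}-\frac{1}{p_2})}\|u^{(j)}\|_{L_{p_0}}$ for $p_2 < p_0$ --- the key point being that both inequalities go through $\|u^{(j)}\|_{L_{p_0}}$ with explicit powers of $2^j$. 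The interpolation relations (\ref{sinterpolationrelation})--(\ref{pinterpolationrelation}) are precisely what makes the bookkeeping of these powers consistent.

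Next I would fix a cutoff integer $J = J(\epsilon)$ and set $u^{1,\epsilon} := \sum_{j > J} u^{(j)}$ (the high-frequency part, which lands in the smaller space $\dot{B}^{s_1}_{p_1,p_1}$ because $s_1 < s_0$ compensates the growth from Bernstein) and $u^{2,\epsilon} := \sum_{j \leq J} u^{(j)}$ (the low-frequency part, in $\dot{B}^{s_2}_{p_2,p_2}$). For $u^{1,\epsilon}$: using Lemma \ref{bahouricheminbook} (valid since $s_1 < 3/p_1$ by (\ref{besovbanachcondition})) together with Bernstein,
\begin{equation}
\|u^{1,\epsilon}\|_{\dot{B}^{s_1}_{p_1,p_1}}^{p_1} \lesssim \sum_{j>J} 2^{j s_1 p_1}\|u^{(j)}\|_{L_{p_1}}^{p_1} \lesssim \sum_{j>J} 2^{j s_1 p_1} 2^{3j p_1(\frac{1}{p_0}-\frac{1}{p_1})}\|u^{(j)}\|_{L_{p_0}}^{p_1}.
\end{equation}
One then rewrites the exponent of $2^j$ as $s_0 p_0 \cdot \frac{p_1}{p_0} + (\text{something strictly positive})\cdot$, no --- more precisely, one checks via (\ref{sinterpolationrelation})--(\ref{pinterpolationrelation}) that $s_1 p_1 + 3p_1(\frac{1}{p_0}-\frac{1}{p_1}) - \frac{p_1}{p_0}s_0 p_0$ is a fixed constant, call it $c_1$, and moreover that restricting to $j > J$ forces $2^{j c_1} \leq 2^{J c_1 \cdot(\text{sign factor})}$; combined with $\|u^{(j)}\|_{L_{p_0}}^{p_1} = \|u^{(j)}\|_{L_{p_0}}^{p_1 - p_0}\|u^{(j)}\|_{L_{p_0}}^{p_0} \leq (\sup_j \|u^{(j)}\|_{L_{p_0}})^{p_1-p_0}\|u^{(j)}\|_{L_{p_0}}^{p_0}$ and $\sup_j \|u^{(j)}\|_{L_{p_0}} \leq \|u_0\|_{\dot{B}^{s_0}_{p_0,p_0}}$, one extracts a factor $2^{J\cdot(\cdots)(p_1-p_0)}$. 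Choosing $2^{J} \sim \epsilon$ (with the correct sign of the exponent dictated by whether $j$ ranges above or below $J$) then produces exactly the factor $\epsilon^{p_1-p_0}$ in (\ref{u_1estgeneral}). The estimate for $u^{2,\epsilon}$ is entirely symmetric: Lemma \ref{bahouricheminbook} applies since $s_2 < 3/p_2$, the geometric sum now runs over $j \leq J$ and converges from the other side, and the same choice $2^J \sim \epsilon$ yields $\epsilon^{p_2-p_0}$ with the constant depending only on $p_0,p_1,p_2$ and $\|\mathcal{F}^{-1}\varphi\|_{L_1}$ (the latter entering through the constant in Bernstein/Young's inequality for the convolution kernels of the blocks).

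The main obstacle --- really the only delicate point --- is verifying that the arithmetic of the exponents works out so that a \emph{single} threshold $J$ simultaneously gives the gain $\epsilon^{p_1-p_0}$ on the high part and $\epsilon^{p_2-p_0}$ on the low part, with the exponents of $2^j$ in the two geometric series having opposite signs (so that each series is summable on its respective range and each picks up the right power of $2^J$). This is where the relations (\ref{sinterpolationrelation})--(\ref{pinterpolationrelation}) are used in an essential way, rather than merely for consistency; concretely one should check that $s_1 + 3(\frac{1}{p_0}-\frac{1}{p_1}) - s_0 < 0 < s_2 + 3(\frac{1}{p_0}-\frac{1}{p_2}) - s_0$ up to the right normalization, which follows from $s_1 < s_0 < s_2$ together with the interpolation identities. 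Once this sign bookkeeping is pinned down, the two estimates follow by the geometric-series computation sketched above, and the convergence of both series (hence membership of $u^{1,\epsilon}, u^{2,\epsilon}$ in the asserted spaces) is handed to us by Lemma \ref{bahouricheminbook}. I would also remark that, unlike the refinement in Proposition \ref{besovinfinityinterpolationtheorem}, here both target spaces have matching third index equal to the integrability index, so no extra care with the $\dot{B}^{\cdot}_{p,\infty}$ endpoints is needed and the argument is a clean dyadic splitting.
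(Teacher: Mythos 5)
Your proposed proof takes a genuinely different route from the paper's, and unfortunately the route does not work. You split $u_0$ by a frequency threshold $J(\epsilon)$, whereas the decomposition actually used here (quoted from \cite{barker2017}, and mirrored in the fully written-out proof of Lemma \ref{Decompgeneralinfinity}) is a Calder\'on-type \emph{amplitude} truncation of each Littlewood--Paley block: one writes $\dot{\Delta}_j u_0 = (\dot{\Delta}_j u_0)\chi_{|\dot{\Delta}_j u_0|\leq N(j,\epsilon)} + (\dot{\Delta}_j u_0)\chi_{|\dot{\Delta}_j u_0|> N(j,\epsilon)}$ with a $j$-dependent height $N(j,\epsilon)$, re-localizes in frequency with $\sum_{|m-j|\leq 1}\dot{\Delta}_m$ (costing only $\|\mathcal{F}^{-1}\varphi\|_{L_1}$), and reassembles via Lemma \ref{bahouricheminbook}. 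The elementary inequalities $\|f\chi_{|f|\leq N}\|_{L_{p_1}}^{p_1}\leq N^{p_1-p_0}\|f\|_{L_{p_0}}^{p_0}$ and $\|f\chi_{|f|>N}\|_{L_{p_2}}^{p_2}\leq N^{p_2-p_0}\|f\|_{L_{p_0}}^{p_0}$ do the work that you try to assign to Bernstein, and --- crucially --- they move the Lebesgue exponent in \emph{both} directions.

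The two concrete gaps in your argument are the following. First, the ``reverse Bernstein'' inequality you invoke, $\|u^{(j)}\|_{L_{p_2}}\lesssim 2^{3j(1/p_0-1/p_2)}\|u^{(j)}\|_{L_{p_0}}$ for $p_2<p_0$, is false: Bernstein on an annulus only increases the integrability index, and a function with Fourier support in $2^jC'$ and finite $L_{p_0}$ norm need not lie in $L_{p_2}$ at all for $p_2<p_0$ (take a sum of widely separated translates of a fixed annulus-localized bump with coefficients in $\ell^{p_0}\setminus\ell^{p_2}$). So the low-frequency piece of your splitting cannot even be placed in $\dot{B}^{s_2}_{p_2,p_2}$ by this mechanism. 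Second, the sign condition you defer to a final check, namely $s_1+3(\tfrac{1}{p_0}-\tfrac{1}{p_1})-s_0<0<s_2+3(\tfrac{1}{p_0}-\tfrac{1}{p_2})-s_0$, is equivalent (writing $\sigma_i:=s_i-3/p_i$ and using (\ref{sinterpolationrelation})--(\ref{pinterpolationrelation}), which give $\sigma_0=(1-\theta)\sigma_1+\theta\sigma_2$) to $\sigma_1<\sigma_2$; this is \emph{not} implied by the hypotheses, and it actually fails in the paper's own application of the Proposition inside Lemma \ref{Decomp}, where $\sigma_1=-1+\delta_1$ and $\sigma_2=-3/2$, so $\sigma_1>\sigma_2$. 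The interpolation relations constrain how $N(j,\epsilon)$ must scale in $j$ so that one height works simultaneously for both truncated pieces; they do not supply the dyadic-in-$j$ geometric decay your frequency splitting needs. To repair the argument you should abandon the frequency cutoff and run the height-truncation scheme exactly as in the proof of Lemma \ref{Decompgeneralinfinity}.
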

In order to prove Proposition \ref{besovinfinityinterpolationtheorem}, we  must first state and prove two Lemmas. Here is the first.
\begin{lemma}\label{Decompgeneralinfinity}
Let  $3<p<\infty$ and suppose $u_0 \in \dot{B}^{s_p}_{p,\infty}(\mathbb{R}^3)$.
Then  there exists $p<p_0<\infty$ and $0<\delta<-s_{p_0}$ such that  for any $N>0$ there exists functions 
$\bar{u}^{1,N}\in \dot{B}^{s_{p_0}+\delta}_{p_0,\infty}(\mathbb{R}^3)\cap \dot{B}^{s_{p}}_{p,\infty}(\mathbb{R}^3) $ and $\bar{u}^{2,N}\in \dot{B}^{0}_{2,\infty}(\mathbb{R}^3) \cap \dot{B}^{s_{p}}_{p,\infty}(\mathbb{R}^3) $ with
 
\begin{equation}\label{udecomp1infinity}
u_0= {u}^{1,N}+{u}^{2,N},
\end{equation}
\begin{equation}\label{baru_1estinfinity}
\|{u}^{1,N}\|_{\dot{B}^{s_{p_0}+\delta}_{p_0,\infty}}^{p_0}\leq N^{p_0-p} \|u_0\|_{\dot{B}^{s_{p}}_{p,\infty}}^{p},
\end{equation}
\begin{equation}\label{baru_2estinfinity}
\|{u}^{2,N}\|_{\dot{B}^{0}_{2,\infty}}^2\leq C(p,p_0,\|\mathcal{F}^{-1}\varphi\|_{L_1}) N^{2-p}\|u_0\|_{\dot{B}^{s_{p}}_{p,\infty}}^p
,
\end{equation} 
\begin{equation}\label{baru_1est.1infinity}
\|{u}^{1,N}\|_{\dot{B}^{s_{p}}_{p,\infty}}\leq  C(\|\mathcal{F}^{-1}\varphi\|_{L_1})\|u_0\|_{\dot{B}^{s_{p}}_{p,\infty}}
\end{equation}
and
\begin{equation}\label{baru_2est.1infinity}
\|{u}^{2,N}\|_{\dot{B}^{s_{p}}_{p,\infty}}\leq C(\|\mathcal{F}^{-1}\varphi\|_{L_1})\|u_0\|_{\dot{B}^{s_{p}}_{p,\infty}}.
\end{equation}
\end{lemma}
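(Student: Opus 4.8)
The plan is to obtain the decomposition by splitting the Littlewood--Paley blocks of $u_0$ according to their size at each dyadic scale, in the spirit of the $L^{3,\infty}$ decomposition (\ref{widetildeL2est})--(\ref{barLp}) and of Proposition~\ref{Decompgeneral}. First I would fix an auxiliary exponent $p_0$ with $p<p_0<\infty$, chosen close enough to $p$ that we still have room to raise the regularity index: concretely I want $s_{p_0}<0$ and I will produce a gain $\delta$ with $0<\delta<-s_{p_0}$. Writing $u_0=\sum_{j\in\mathbb Z}\dot\Delta_j u_0$, set, for a threshold depending on $N$ and on the scale $j$,
\begin{equation}\label{decompplaneq}
u^{1,N}:=\sum_{j\in\mathbb Z}\dot\Delta_j u_0\,\mathbf 1_{\{\|\dot\Delta_j u_0\|_{L_p}\,2^{js_p}\,>\,\text{(threshold)}_j\}},\qquad u^{2,N}:=u_0-u^{1,N},
\end{equation}
where $u^{1,N}$ collects the ``large'' blocks (which will be placed in the higher-regularity space $\dot B^{s_{p_0}+\delta}_{p_0,\infty}$) and $u^{2,N}$ the ``small'' ones (which will go into $\dot B^0_{2,\infty}$). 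The precise threshold is chosen so that in (\ref{baru_1estinfinity}) summing $2^{j(s_{p_0}+\delta)p_0}\|\dot\Delta_j u_0\|_{L_{p_0}}^{p_0}$ over the large-block set, after using Bernstein's inequality $\|\dot\Delta_j u_0\|_{L_{p_0}}\lesssim 2^{3j(1/p-1/p_0)}\|\dot\Delta_j u_0\|_{L_p}$ to convert $L_{p_0}$ norms to $L_p$ norms, telescopes against the $\dot B^{s_p}_{p,\infty}$ bound $\|\dot\Delta_j u_0\|_{L_p}\le 2^{-js_p}\|u_0\|_{\dot B^{s_p}_{p,\infty}}$; here one needs $s_{p_0}+\delta-s_p+3(1/p-1/p_0)$ to have the right sign so the geometric-type sum converges, which is exactly what pins down the admissible range of $\delta$. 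Similarly, (\ref{baru_2estinfinity}) follows by summing $2^{2j\cdot 0}\|\dot\Delta_j u_0\|_{L_2}^2$ over the small-block set, again via Bernstein ($L_2\leftarrow L_p$) and the $\dot B^{s_p}_{p,\infty}$ bound, with the threshold ensuring the factor of $N^{2-p}$ (note $2-p<0$, so this is the small piece). Throughout I will apply Lemma~\ref{bahouricheminbook} to guarantee the series actually converge in $\mathcal S_h'$ and define genuine elements of the claimed Besov spaces, which requires checking the index condition $s<3/p$ for each of the three target spaces --- automatic since all the regularity indices here are negative or zero and all integrability indices are $\ge 2$.

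Next, the bounds (\ref{baru_1est.1infinity}) and (\ref{baru_2est.1infinity}) in the original space $\dot B^{s_p}_{p,\infty}$ are the easiest part: since $u^{1,N}$ and $u^{2,N}$ are built by \emph{selecting} a subset of the $\dot\Delta_j u_0$ (possibly after composing with the fattened projector that reconstitutes $\dot\Delta_j$ from a slightly larger annular multiplier, which costs only a factor $\|\mathcal F^{-1}\varphi\|_{L_1}$ via Young's inequality), one has $\|\dot\Delta_j u^{i,N}\|_{L_p}\lesssim \|\dot\Delta_j u_0\|_{L_p}$ uniformly in $j$ and $N$, hence $\|u^{i,N}\|_{\dot B^{s_p}_{p,\infty}}\le C(\|\mathcal F^{-1}\varphi\|_{L_1})\|u_0\|_{\dot B^{s_p}_{p,\infty}}$. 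The divergence-free property is inherited automatically because each $\dot\Delta_j$ commutes with $\mathrm{div}$, so if $\mathrm{div}\,u_0=0$ then $\mathrm{div}\,\dot\Delta_j u_0=0$ for every $j$, and therefore $\mathrm{div}\,u^{1,N}=\mathrm{div}\,u^{2,N}=0$ in the sense of tempered distributions; no Leray projection is needed here, unlike in Proposition~\ref{weak*approx}.

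The part I expect to be the main obstacle is the bookkeeping of the scale-dependent threshold in (\ref{decompplaneq}): one must choose it so that \emph{both} the high-regularity estimate (\ref{baru_1estinfinity}) with its favourable power $N^{p_0-p}$ and the $L_2$ estimate (\ref{baru_2estinfinity}) with its favourable power $N^{2-p}$ hold simultaneously, and this is where using $\dot B^{s_p}_{p,\infty}$ (merely a \emph{supremum} over $j$ of $2^{js_p}\|\dot\Delta_j u_0\|_{L_p}$, with no summability in $j$) rather than $\dot B^{s_p}_{p,p}$ genuinely bites: the naive constant-threshold split that works for $\dot B^{s_p}_{p,p}$ (Proposition~\ref{Decompgeneral}) produces divergent sums here. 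The resolution is to let the threshold decay geometrically in $|j|$ at a rate slow enough not to destroy the $N$-powers but fast enough to make the two one-sided sums converge; verifying that a single choice of decay rate and of $(p_0,\delta)$ accommodates all four estimates is the crux, and is presumably the reason the author remarks that this does ``not appear to obviously follow from the known real interpolation theory.'' Once $p_0$ and $\delta$ are fixed with this property, one records $\gamma_1,\gamma_2$ and the constants and invokes Lemma~\ref{bahouricheminbook} to conclude; the remaining inequalities are then routine applications of Bernstein's inequality and summation of geometric series.
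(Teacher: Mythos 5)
Your overall scaffolding (a scale-dependent threshold, Lemma \ref{bahouricheminbook} to resum the pieces, and the trivial $\dot B^{s_p}_{p,\infty}$ bounds for both halves) matches the paper, but the decomposition itself is different in a way that cannot be repaired: you split $u_0$ by selecting \emph{whole} Littlewood--Paley blocks according to the size of $2^{js_p}\|\dot\Delta_j u_0\|_{L_p}$, whereas the paper truncates each block \emph{pointwise in amplitude}, writing $f^{(j)}:=\dot\Delta_j u_0$ as $f^{(j)}\chi_{\{|f^{(j)}|\le M(j,N)\}}+f^{(j)}\chi_{\{|f^{(j)}|>M(j,N)\}}$ with $M(j,N)=N\,2^{(ps_p-p_0(s_{p_0}+\delta))j/(p_0-p)}$, and then reprojects each piece with $\sum_{|m-j|\le1}\dot\Delta_m$ (which is where the factor $\|\mathcal F^{-1}\varphi\|_{L_1}$ enters). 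The pointwise cut is what produces both estimates: on $\{|f^{(j)}|\le M\}$ one has $|f^{(j)}|^{p_0}\le M^{p_0-p}|f^{(j)}|^{p}$, giving the $L_{p_0}$ bound with the gain $N^{p_0-p}$, and on $\{|f^{(j)}|>M\}$ one has $|f^{(j)}|^{2}\le M^{2-p}|f^{(j)}|^{p}$, giving the $L_2$ bound with $N^{2-p}$. Your block-selection scheme has access to neither.

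Concretely, two of your steps fail. First, the $L_2$ estimate (\ref{baru_2estinfinity}): Bernstein's inequality $\|\dot\Delta_j u_0\|_{L_q}\lesssim 2^{3j(1/p-1/q)}\|\dot\Delta_j u_0\|_{L_p}$ only \emph{raises} the integrability exponent ($q\ge p$); there is no inequality taking $L_p$ to $L_2$ for $p>2$ on a fixed annulus, and a single block of a $\dot B^{s_p}_{p,\infty}$ function need not lie in $L_2$ at all, so whichever blocks you assign wholesale to $u^{2,N}$ cannot be placed in $\dot B^{0}_{2,\infty}$. Second, the high-regularity estimate (\ref{baru_1estinfinity}): Bernstein gives $2^{j(s_{p_0}+\delta)}\|\dot\Delta_j u_0\|_{L_{p_0}}\lesssim 2^{j\delta}\,2^{js_p}\|\dot\Delta_j u_0\|_{L_p}$ (using $s_{p_0}+3(1/p-1/p_0)=s_p$), and the factor $2^{j\delta}$ diverges as $j\to+\infty$, so no threshold lets you keep high-frequency blocks in $u^{1,N}$ --- yet by the first point they cannot go to $u^{2,N}$ either. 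The paper's amplitude truncation evades both obstructions because \emph{every} block contributes a bounded piece and an $L_2$ piece, with the $j$-dependence of $M(j,N)$ tuned so that both weighted bounds become uniform in $j$. A minor further consequence: the pointwise cut-off destroys the divergence-free property, which is exactly why the Leray projector is applied at the end of the proof of Proposition \ref{besovinfinityinterpolationtheorem}; your claim that no projection is needed is an artefact of the non-working selection scheme.
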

\begin{proof}
If $p<p_0<\infty$, there exists a $\theta\in ]0,1[$ such that
\begin{equation}\label{pporelation}
\frac{1-\theta}{p_0}+\frac{\theta}{2}=\frac{1}{p}.
\end{equation}
If we define
\begin{equation}\label{deltadefinfinity}
\delta:=\frac{\theta}{2(1-\theta)}>0
\end{equation}
we see that
\begin{equation}\label{criticalindicesrelation}
(s_{p_0}+\delta)(1-\theta)=s_{p}.
\end{equation}
Denote, $$f^{(j)}:= \dot{\Delta}_{j} u,$$ $$f^{(j)M}_{-}:=f^{(j)}\chi_{|f^{(j)}|\leq M} $$ and $$f^{(j)M}_{+}:=f^{(j)}(1-\chi_{|f^{(j)}|\leq M}). $$ 
It is easily verified that the following holds:
$$ \|f^{(j)M}_{-}\|_{L_{p_0}}^{p_0}\leq M^{p_0-p}\|f^{(j)}\|_{L_{p}}^{p},$$
$$ \|f^{(j)M}_{-}\|_{L_{p}}\leq\|f^{(j)}\|_{L_{p}},$$
$$\|f^{(j)M}_{+}\|_{L_{2}}^{2}\leq M^{2-p}\|f^{(j)}\|_{L_{p}}^{p}$$
and 
$$\|f^{(j)M}_{+}\|_{L_{p}}\leq \|f^{(j)}\|_{L_{p}}.$$
Thus, we may write
\begin{equation}\label{truncationest1infinity}
2^{p_0(s_{p_0}+\delta) j}\|f^{(j)M}_{-}\|_{L_{p_0}}^{p_0}\leq M^{p_0-p}2^{(p_0(s_{p_0}+\delta)-p s_p)j} 2^{p s_p j}\|f^{(j)}\|_{L_{p}}^{p}
\end{equation}
\begin{equation}\label{truncationest2infinity}
\|f^{(j)M}_{+}\|_{L_{2}}^{2}\leq M^{2-p} 2^{-ps_pj} 2^{ps_pj}\|f^{(j)}\|_{L_{p}}^{p}.
\end{equation}
With (\ref{truncationest1infinity}) in mind, we define 
$$M({j,N,p,p_0,\delta}):= N 2^{\frac{(p s_p-p_0(s_{p_0}+\delta))j}{p_0-p}}.$$
For the sake of brevity we will write $M({j},N)$.
Using the relations of the Besov indices given by (\ref{pporelation})-(\ref{deltadefinfinity}), we can infer that
$$M(j,N)^{2-p} 2^{-ps_pj} = N^{p_2-p_0}.$$
 The crucial point being that this is independent of $j$.
 Thus, we infer
 \begin{equation}\label{truncationest1.1infinity}
 2^{p_0(s_{p_0}+\delta) j}\|f^{(j)M(j,N)}_{-}\|_{L_{p_0}}^{p_0}\leq N^{p_0-p} 2^{p s_p j}\|f^{(j)}\|_{L_{p}}^{p},
 \end{equation}
 \begin{equation}\label{truncationest1.2infinity}
  2^{s_{p} j}\|f^{(j)M(j,N)}_{-}\|_{L_{p}}\leq 2^{ s_p j}\|f^{(j)}\|_{L_{p}},
 \end{equation}
\begin{equation}\label{truncationest2.1infinity}
 \|f^{(j)M(j,N)}_{+}\|_{L_{2}}^{2}\leq N^{2-p} 2^{p s_pj}\|f^{(j)}\|_{L_{p}}^{p}
 \end{equation}
 and
 \begin{equation}\label{truncationest2.2infinity}
  2^{s_{p} j}\|f^{(j)M(j,N)}_{+}\|_{L_{p}}\leq 2^{ s_p j}\|f^{(j)}\|_{L_{p}}.
 \end{equation}
Next, it is well known that for any $ u\in \dot{B}^{s_p}_{p,\infty}(\mathbb{R}^3)$ we have that
$\sum_{j=-m}^{m} \dot{\Delta}_{j} u$ converges to $u$ in the sense of tempered distributions.
Furthermore, we have  that $\dot{\Delta}_{j}\dot{\Delta}_{j'} u=0$ if $|j-j'|>1.$ Combing these two facts allows us to observe that
\begin{equation}\label{smoothingtruncationsinfinity}
\dot{\Delta}_{j} u= \sum_{|m-j|\leq 1} \dot{\Delta}_{m} f^{(j)}= \sum_{|m-j|\leq 1}\dot{\Delta}_{m} f_{-}^{(j)M(j,N)}+\sum_{|m-j|\leq 1}\dot{\Delta}_{m} f_{+}^{(j)M(j,N)}.
\end{equation}
Define 
\begin{equation}\label{decomp1eachpieceinfinity}
u^{1,N}_{j}:= \sum_{|m-j|\leq 1}\dot{\Delta}_{m} f_{-}^{(j)M(j,N)},
\end{equation}
\begin{equation}\label{decomp2eachpieceinfinity}
u^{2,N}_{j}:= \sum_{|m-j|\leq 1}\dot{\Delta}_{m} f_{+}^{(j)M(j,N)}
\end{equation}
It is clear, that \begin{equation}\label{fouriersupportinfinity}
\textrm{Supp}\,\mathcal{F}(u^{1,N}_{j}), \textrm{Supp}\,\mathcal{F}(u^{2,N}_{j})\subset 2^{j}C^{'}.
\end{equation}
Here, $C'$ is the annulus defined by $C':=\{\xi\in\mathbb{R}^3: 3/8\leq |\xi|\leq 16/3\}.$
Using, (\ref{truncationest1.1infinity})-(\ref{truncationest2.2infinity}) we can obtain
the following estimates:
\begin{equation}\label{decomp1estinfinity}
2^{p_0(s_{p_0}+\delta) j}\|u^{1,N}_{j}\|_{L_{p_0}}^{p_0}\leq \lambda_{1}(p_0, \|\mathcal{F}^{-1} \varphi\|_{L_1}) 2^{p_0(s_{p_0}+\delta) j}\|f^{(j)M(j,N)}_{-}\|_{L_{p_0}}^{p_0}\leq$$$$\leq \lambda_{1}(p_0, \|\mathcal{F}^{-1} \varphi\|_{L_1}) N^{p_0-p} 2^{p s_p j}\|f^{(j)}\|_{L_{p}}^{p},
\end{equation}
\begin{equation}\label{decomp1.1estinfinity}
2^{s_{p} j}\|u^{1,N}_{j}\|_{L_{p}}\leq \lambda_{2}( \|\mathcal{F}^{-1} \varphi\|_{L_1}) 2^{s_{p} j}\|f^{(j)M(j,N)}_{-}\|_{L_{p}}\leq$$$$\leq \lambda_{2}( \|\mathcal{F}^{-1} \varphi\|_{L_1}) 2^{ s_p j}\|f^{(j)}\|_{L_{p}},
\end{equation}
\begin{equation}\label{decomp2estinfinity}
\|u^{2,N}_{j}\|_{L_{2}}^{2}\leq \lambda_{3}( \|\mathcal{F}^{-1} \varphi\|_{L_1}) \|f^{(j)M(j,N)}_{+}\|_{L_{2}}^{2}\leq$$$$\leq \lambda_{3}( \|\mathcal{F}^{-1} \varphi\|_{L_1})N^{2-p} 2^{p s_p j}\|f^{(j)}\|_{L_{p}}^{p}
\end{equation}
and
\begin{equation}\label{decomp2.1estinfinity}
2^{s_{p} j}\|u^{2,N}_{j}\|_{L_{p}}\leq \lambda_{4}( \|\mathcal{F}^{-1} \varphi\|_{L_1}) 2^{s_{p} j}\|f^{(j)M(j,N)}_{+}\|_{L_{p}}\leq$$$$\leq \lambda_{4}( \|\mathcal{F}^{-1} \varphi\|_{L_1}) 2^{ s_p j}\|f^{(j)}\|_{L_{p}}.
\end{equation}
It is then the case that (\ref{fouriersupportinfinity})-(\ref{decomp2.1estinfinity}) allow us to apply the results of Lemma \ref{bahouricheminbook}. This allows us to achieve the desired decomposition with the choice
$$u^{1,N}=\sum_{j\in\mathbb{Z}}u^{1,N}_{j},$$
$$u^{2,N}=\sum_{j\in\mathbb{Z}}u^{2,N}_{j}.$$

\end{proof}
\begin{lemma}\label{Decomp}
Fix $2<\alpha< 3.$
\begin{itemize}
\item For $2<\alpha< 3$, take $p$ such that $3 <p< \frac{\alpha}{3-\alpha}$.

\end{itemize}
For $p$ and $\alpha$ satisfying these conditions, suppose that 
\begin{equation}
u_0\in \dot{B}^{s_{p,\alpha}}_{p,p}(\mathbb{R}^3)\cap \dot{B}^{s_{p}}_{p,\infty}(\mathbb{R}^3)
\end{equation}

  Then the above assumptions imply that there exists $p<p_1<\infty$ and \\$0<\delta_{1}<-s_{p_1}$ such that  for any $\epsilon>0$ there exists functions 
${U}^{1,\epsilon}\in \dot{B}^{s_{p_1}+\delta_{1}}_{p_1,p_1}(\mathbb{R}^3)\cap \dot{B}^{s_p}_{p,\infty}(\mathbb{R}^3)$ and ${U}^{2,\epsilon}\in L_2(\mathbb{R}^3)\cap \dot{B}^{s_p}_{p,\infty}(\mathbb{R}^3)$ with
 
\begin{equation}\label{udecomp1}
u_0= {U}^{1,\epsilon}+{U}^{2,\epsilon},
\end{equation}
\begin{equation}\label{baru_1est}
\|{U}^{1,\epsilon}\|_{\dot{B}^{s_{p_1}+\delta_1}_{p_1,p_1}}^{p_1}\leq \epsilon^{p_1-p} \|u_0\|_{\dot{B}^{s_{p,\alpha}}_{p,p}}^{p},
\end{equation}
\begin{equation}\label{baru_1est.2}
\|{U}^{2,\epsilon}\|_{\dot{B}^{s_{p}}_{p,\infty}}\leq  C(\|\mathcal{F}^{-1}\varphi\|_{L_1})\|u_0\|_{\dot{B}^{s_{p}}_{p,\infty}},
\end{equation}
\begin{equation}\label{baru_2est}
\|{U}^{2,\epsilon}\|_{L_2}^2\leq C(p,p_1,\|\mathcal{F}^{-1}\varphi\|_{L_1}) \epsilon^{2-p}\|u_0\|_{\dot{B}^{s_{p,\alpha}}_{p,p}}^p
\end{equation} 
and
\begin{equation}\label{baru_2est.2}
\|{U}^{2,\epsilon}\|_{\dot{B}^{s_{p}}_{p,\infty}}\leq  C(\|\mathcal{F}^{-1}\varphi\|_{L_1})\|u_0\|_{\dot{B}^{s_{p}}_{p,\infty}}.
\end{equation}

\end{lemma}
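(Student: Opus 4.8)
The plan is to mimic the truncation-in-amplitude argument of Lemma \ref{Decompgeneralinfinity}, but now starting from the stronger hypothesis $u_0\in\dot B^{s_{p,\alpha}}_{p,p}$ (summable in $j$, not merely bounded), which is what will allow the power $\epsilon^{p_1-p}$ and $\epsilon^{2-p}$ on the right-hand sides of \eqref{baru_1est} and \eqref{baru_2est}. First I would choose the target exponent $p_1$ with $p<p_1<\infty$ and an interpolation parameter $\theta\in]0,1[$ satisfying $\frac{1-\theta}{p_1}+\frac{\theta}{2}=\frac1p$; then set $\delta_1:=\frac{\theta}{2(1-\theta)}\bigl(\tfrac3{\alpha}-1\bigr)$-type quantity, chosen precisely so that the scaling identity $(s_{p_1}+\delta_1)(1-\theta)+0\cdot\theta$ lines up with $s_{p,\alpha}$ in the $j$-variable (this is the analogue of \eqref{criticalindicesrelation}, \eqref{deltadefinfinity}); one has to check $0<\delta_1<-s_{p_1}$, which restricts $p_1$ to lie in an interval depending on $p$ and $\alpha$, and this is exactly where the hypothesis $3<p<\frac{\alpha}{3-\alpha}$ gets used to guarantee that interval is nonempty.

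Next I would set, as in the earlier lemma, $f^{(j)}:=\dot\Delta_j u_0$, split each block by amplitude into $f^{(j)M}_{-}=f^{(j)}\chi_{|f^{(j)}|\le M}$ and $f^{(j)M}_{+}=f^{(j)}(1-\chi_{|f^{(j)}|\le M})$, and record the elementary bounds $\|f^{(j)M}_{-}\|_{L_{p_1}}^{p_1}\le M^{p_1-p}\|f^{(j)}\|_{L_p}^{p}$, $\|f^{(j)M}_{+}\|_{L_2}^{2}\le M^{2-p}\|f^{(j)}\|_{L_p}^{p}$, together with the trivial $\|f^{(j)M}_{\pm}\|_{L_p}\le\|f^{(j)}\|_{L_p}$. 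The key difference from Lemma \ref{Decompgeneralinfinity} is the choice of the cutoff level: rather than letting $M$ depend on $j$, I would take $M=\epsilon$ (a single constant), because the $\ell^p$-summability in $j$ coming from $u_0\in\dot B^{s_{p,\alpha}}_{p,p}$ already provides the convergence of the relevant series without needing a $j$-dependent renormalization. Concretely, $\sum_j 2^{p_1(s_{p_1}+\delta_1)j}\|f^{(j)\epsilon}_{-}\|_{L_{p_1}}^{p_1}\le \epsilon^{p_1-p}\sum_j 2^{p(s_{p_1}+\delta_1)(1-\theta)^{-1}\cdot\text{(index)}\, j}\|f^{(j)}\|_{L_p}^p$; after the scaling identity is invoked the exponent of $2^j$ collapses to $p\,s_{p,\alpha}$, giving $\le\epsilon^{p_1-p}\|u_0\|_{\dot B^{s_{p,\alpha}}_{p,p}}^p$, and similarly for the $L_2$ piece one gets $\sum_j\|f^{(j)\epsilon}_{+}\|_{L_2}^2\le\epsilon^{2-p}\|u_0\|_{\dot B^{s_{p,\alpha}}_{p,p}}^p$ after checking $M^{2-p}2^{-p s_{p,\alpha}j}$ is $j$-independent under the chosen relations.

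I would then define $U^{1,\epsilon}_j:=\sum_{|m-j|\le 1}\dot\Delta_m f^{(j)\epsilon}_{-}$ and $U^{2,\epsilon}_j:=\sum_{|m-j|\le 1}\dot\Delta_m f^{(j)\epsilon}_{+}$, note that their Fourier supports lie in $2^j C'$ for the enlarged annulus $C'$, use boundedness of $\dot\Delta_m$ on $L^q$ (controlled by $\|\mathcal F^{-1}\varphi\|_{L_1}$) to transfer the block estimates above to $U^{i,\epsilon}_j$, and finally invoke Lemma \ref{bahouricheminbook} — whose index condition $s<\frac3p$ holds in each case because $s_{p_1}+\delta_1<0<\frac3{p_1}$, $0<\frac32$, $s_p<0<\frac3p$ — to sum the series and obtain $U^{1,\epsilon}=\sum_j U^{1,\epsilon}_j\in\dot B^{s_{p_1}+\delta_1}_{p_1,p_1}\cap\dot B^{s_p}_{p,\infty}$ and $U^{2,\epsilon}=\sum_j U^{2,\epsilon}_j\in L_2\cap\dot B^{s_p}_{p,\infty}$ with the stated bounds; the two $\dot B^{s_p}_{p,\infty}$ estimates \eqref{baru_1est.2} and \eqref{baru_2est.2} come for free from $\|f^{(j)\epsilon}_{\pm}\|_{L_p}\le\|f^{(j)}\|_{L_p}$ and $u_0\in\dot B^{s_p}_{p,\infty}$, exactly as in \eqref{decomp1.1estinfinity}, \eqref{decomp2.1estinfinity}. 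The main obstacle I anticipate is purely bookkeeping: pinning down the admissible range of $p_1$ and the exact formula for $\delta_1$ so that simultaneously (i) the scaling exponents collapse correctly in both the $L_{p_1}$ and the $L_2$ sums, (ii) $0<\delta_1<-s_{p_1}$, and (iii) Lemma \ref{bahouricheminbook} applies; none of this is deep, but it is the step where the constraint $3<p<\frac{\alpha}{3-\alpha}$ must be seen to be exactly what is needed.
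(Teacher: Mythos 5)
There is a genuine gap at the heart of your argument: the choice of a $j$-independent truncation level $M=\epsilon$ does not work, and the $\ell^p$-summability of $u_0\in\dot B^{s_{p,\alpha}}_{p,p}$ cannot rescue it. With a constant cutoff the block estimate reads
\begin{equation*}
2^{p_1(s_{p_1}+\delta_1)j}\|f^{(j)\epsilon}_{-}\|_{L_{p_1}}^{p_1}\leq \epsilon^{p_1-p}\,2^{\left(p_1(s_{p_1}+\delta_1)-p\,s_{p,\alpha}\right)j}\,2^{p\,s_{p,\alpha}j}\|f^{(j)}\|_{L_p}^{p},
\end{equation*}
and the exponent $p_1(s_{p_1}+\delta_1)-p\,s_{p,\alpha}$ does \emph{not} vanish: from $\tfrac{1-\theta}{p_1}+\tfrac{\theta}{2}=\tfrac1p$ and $(1-\theta)(s_{p_1}+\delta_1)=s_{p,\alpha}$ one computes $p_1(s_{p_1}+\delta_1)-p\,s_{p,\alpha}=s_{p,\alpha}\,\tfrac{p^2\theta}{2-p\theta}<0$, so the prefactor $2^{(\cdot)j}$ blows up as $j\to-\infty$; likewise for the $L_2$ piece the factor $2^{-p\,s_{p,\alpha}j}$ blows up as $j\to+\infty$. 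Since $\sum_j 2^{p s_{p,\alpha}j}\|f^{(j)}\|_{L_p}^p<\infty$ says nothing about sums weighted by an unbounded geometric factor, your two key series need not converge, and the claimed "collapse" to $p\,s_{p,\alpha}$ simply does not happen. The cutoff must be taken $j$-dependent, $M_j=\epsilon\,2^{\sigma j}$ with $\sigma=\tfrac{p\,s_{p,\alpha}}{2-p}$, which (one checks, using exactly the two index relations above) kills the residual $j$-dependence in \emph{both} the $L_{p_1}$ and the $L_2$ sums simultaneously; this is the same mechanism as the cutoff $M(j,N)$ in Lemma \ref{Decompgeneralinfinity}, whose $j$-dependence you have misattributed to the $\ell^\infty$ summability rather than to this cancellation.

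With that correction your argument becomes essentially the construction underlying Proposition \ref{Decompgeneral}, and the rest of what you write (Fourier supports in $2^jC'$, Lemma \ref{bahouricheminbook}, the free $\dot B^{s_p}_{p,\infty}$ persistence bounds \eqref{baru_1est.2} and \eqref{baru_2est.2} from $\|f^{(j)M}_{\pm}\|_{L_p}\leq\|f^{(j)}\|_{L_p}$) is fine. Note that the paper does not redo this construction at all: it verifies the index relations \eqref{condition}--\eqref{regularityindicerelation} (the point where $3<p<\tfrac{\alpha}{3-\alpha}$ is used, via $\theta>\tfrac6\alpha-2$), invokes Proposition \ref{Decompgeneral} as a black box for \eqref{baru_1est} and \eqref{baru_2est}, and only re-opens the construction to read off the two extra $\dot B^{s_p}_{p,\infty}$ estimates. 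Your route is legitimate and self-contained once the cutoff is fixed, but as written the central summation step fails.
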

\begin{proof}

Under this condition, we can find $p<p_{1}<\infty$ such that
\begin{equation}\label{condition}
\theta:= \frac{\frac{1}{p}-\frac{1}{p_1}}{\frac{1}{2}-\frac{1}{p_1}}>\frac{6}{\alpha}-2.
\end{equation}
Clearly, $0<\theta<1$ and moreover
\begin{equation}\label{summabilityindicerelation}
\frac{1-\theta}{p_1}+\frac{\theta}{2}=\frac{1}{p}.
\end{equation}
Define
\begin{equation}\label{deltadef}
\delta_{1}:=\frac{1-\frac{3}{\alpha}+\frac{\theta}{2}}{1-\theta}.
\end{equation}
From (\ref{condition}), we see that $\delta_1>0$.
One can also see we have the following relation: 
\begin{equation}\label{regularityindicerelation}
(1-\theta)(s_{p_1}+\delta_{1})=s_{p,\alpha}.
\end{equation}

The above relations allow us  to apply Proposition \ref{Decompgeneral} to obtain the following decomposition:
(we note that $\dot{B}^{0}_{2,2}(\mathbb{R}^3)$ coincides with $L_2(\mathbb{R}^3)$ with equivalent norms)
\begin{equation}\label{udecomp}
u_0= {U}^{1,\epsilon}+{U}^{2,\epsilon},
\end{equation}
\begin{equation}\label{u_1est}
\|{U}^{1,\epsilon}\|_{\dot{B}^{s_{p_1}+\delta_1}_{p_1,p_1}}^{p_1}\leq \epsilon^{p_1-p} \|u_0\|_{\dot{B}^{s_{p,\alpha}}_{p,p}}^p,
\end{equation}
\begin{equation}\label{u_2est}
\|{U}^{2,\epsilon}\|_{L_2}^2\leq C(p,p_1,\|\mathcal{F}^{-1}\varphi\|_{L_1})\epsilon^{2-p} \|u_0\|_{\dot{B}^{s_{p,\alpha}}_{p,p}}^p
.
\end{equation} 
For $j\in\mathbb{Z}$ and $m\in\mathbb{Z}$, it can be seen that  \begin{equation}\label{besovpersistency1}
\|\dot{\Delta}_{m}\left( (\dot{\Delta}_{j}u_{0})\chi_{|\dot{\Delta}_{j}u_{0}|\leq N(j,\epsilon)}\right)\|_{L_{p}}\leq C(\|\mathcal{F}^{-1} \varphi\|_{L_1})) \|\dot{\Delta}_{j} u_0\|_{L_p}.
\end{equation}
and
\begin{equation}\label{besovpersistency2}
\|\dot{\Delta}_{m}\left( (\dot{\Delta}_{j}u_{0})\chi_{|\dot{\Delta}_{j}u_{0}|\geq N(j,\epsilon)}\right)\|_{L_{p}}\leq C(\|\mathcal{F}^{-1} \varphi\|_{L_1})) \|\dot{\Delta}_{j} u_0\|_{L_p}.
\end{equation}
From \cite{barker2017}, we see that the definitions of $U^{1,\epsilon}$ and $U^{2,\epsilon}$ used in Proposition \ref{Decompgeneral} are of the following form:
$$U^{1,\epsilon}:= \sum_{j}\sum_{|m-j|\leq 1}\dot{\Delta}_{m}\left( (\dot{\Delta}_{j}u_{0})\chi_{|\dot{\Delta}_{j}u_{0}|\leq N(j,\epsilon)}\right)$$
and
$$U^{2,\epsilon}:= \sum_{j}\sum_{|m-j|\leq 1}\dot{\Delta}_{m}\left( (\dot{\Delta}_{j}u_{0})\chi_{|\dot{\Delta}_{j}u_{0}|\geq N(j,\epsilon)}\right)$$
Using this,  along with (\ref{besovpersistency1})-(\ref{besovpersistency2}), we can infer that
$$\|{U}^{1,\epsilon}\|_{\dot{B}^{s_{p}}_{p,\infty}}\leq   C(\|\mathcal{F}^{-1} \varphi\|_{L_1})\|u_0\|_{\dot{B}^{s_{p}}_{p,\infty}}$$
and
$$\|{U}^{2,\epsilon}\|_{\dot{B}^{s_{p}}_{p,\infty}}\leq   C(\|\mathcal{F}^{-1} \varphi\|_{L_1})\|u_0\|_{\dot{B}^{s_{p}}_{p,\infty}}.$$


\end{proof}

\begin{itemize}
\item[]\textbf{Proof of Proposition \ref{besovinfinityinterpolationtheorem}}
\end{itemize}
\begin{proof}
Applying Lemma \ref{Decompgeneralinfinity}, we see that there exists $p<p_0<\infty$ and $0<\delta<-s_{p_0}$ such that  for any $N>0$ there exists functions 
${u}^{1,N}\in \dot{B}^{s_{p_0}+\delta}_{p_0,\infty}(\mathbb{R}^3)\cap \dot{B}^{s_{p}}_{p,\infty}(\mathbb{R}^3) $ and ${u}^{2,N}\in \dot{B}^{0}_{2,\infty}(\mathbb{R}^3) \cap \dot{B}^{s_{p}}_{p,\infty}(\mathbb{R}^3) $ with
 
\begin{equation}\label{udecomp1infinityrecall}
g= {u}^{1,N}+{u}^{2,N},
\end{equation}
\begin{equation}\label{baru_1estinfinityrecall}
\|{u}^{1,N}\|_{\dot{B}^{s_{p_0}+\delta}_{p_0,\infty}}^{p_0}\leq N^{p_0-p} \|g\|_{\dot{B}^{s_{p}}_{p,\infty}}^{p},
\end{equation}
\begin{equation}\label{baru_1est.1infinityrecall}
\|{u}^{1,N}\|_{\dot{B}^{s_{p}}_{p,\infty}}\leq  C(\|\mathcal{F}^{-1}\varphi\|_{L_1})\|g\|_{\dot{B}^{s_{p}}_{p,\infty}},
\end{equation}
\begin{equation}\label{baru_2estinfinityrecall}
\|{u}^{2,N}\|_{\dot{B}^{0}_{2,\infty}}^2\leq C(p,p_0,\|\mathcal{F}^{-1}\varphi\|_{L_1}) N^{2-p}\|g\|_{\dot{B}^{s_{p}}_{p,\infty}}^p
\end{equation} 
and
\begin{equation}\label{baru_2est.1infinityrecall}
\|{u}^{2,N}\|_{\dot{B}^{s_{p}}_{p,\infty}}\leq C(\|\mathcal{F}^{-1}\varphi\|_{L_1})\|g\|_{\dot{B}^{s_{p}}_{p,\infty}}.
\end{equation}
Since $3<p<\infty$, it is clear that there exists an $\alpha:=\alpha(p)$ such that $2<\alpha<3$ and
\begin{equation}\label{pcondition}
3<p<\frac{\alpha}{3-\alpha}.
\end{equation}
With this $p$ and $\alpha$, we may apply Proposition \ref{interpolativeinequalitybahourichemindanchin} with
$s_{1}= -\frac{3}{2}+\frac{3}{p},$
$s_{2}= -1+\frac{3}{p}$
and $\theta=6\Big(\frac{1}{\alpha}-\frac{1}{3}\Big).$ In particular this gives for any $f\in \mathcal{S}^{'}_{h}$:
\begin{equation}\label{interpolativeinequality}
\|f\|_{\dot{B}^{s_{p,\alpha}}_{p,1}}\leq c(p,\alpha)\|f\|_{\dot{B}^{-\frac{3}{2}+\frac{3}{p}}_{p,\infty}}^{6(\frac{1}{\alpha}-\frac{1}{3})}\|f\|_{\dot{B}^{-1+\frac{3}{p}}_{p,\infty}}^{6(\frac{1}{2}-\frac{1}{\alpha})}.
\end{equation}
From Remark \ref{besovremark2}, we see that $\dot{B}^{s_{p,\alpha}}_{p,1}(\mathbb{R}^3)\hookrightarrow \dot{B}^{s_{p,\alpha}}_{p,p}(\mathbb{R}^3) $ and
$\dot{B}^{0}_{2,\infty}(\mathbb{R}^3)\hookrightarrow\dot{B}^{-\frac{3}{2}+\frac{3}{p}}_{p,\infty}(\mathbb{R}^3).$
Thus, we have the inclusion 
\begin{equation}\label{besovinclusionforestnearinitialtime}
\dot{B}^{s_{p}}_{p,\infty}(\mathbb{R}^3)\cap \dot{B}^{0}_{2,\infty}(\mathbb{R}^3)\subset\dot{B}^{s_{p,\alpha}}_{p,p}(\mathbb{R}^3)\cap \dot{B}^{s_{p}}_{p,\infty}(\mathbb{R}^3) . 
\end{equation}
Now (\ref{interpolativeinequality})-(\ref{besovinclusionforestnearinitialtime}), together with (\ref{baru_2estinfinityrecall})-(\ref{baru_2est.1infinityrecall}), imply that $u^{2,N} \in \dot{B}^{s_{p,\alpha}}_{p,p}(\mathbb{R}^3)$ and there exists $\beta({p})>0$ such that
\begin{equation}\label{u2epsilonestinterpolativeinequality}
\|u^{2,N}\|_{\dot{B}^{s_{p,\alpha}}_{p,p}} \leq N^{-\beta(p)} C(p,p_0,\|\mathcal{F}^{-1}\varphi\|_{L_1},\|g\|_{\dot{B}^{s_{p}}_{p,\infty}}).
\end{equation}
Noting (\ref{pcondition}), along with (\ref{baru_2est.1infinityrecall}) and (\ref{u2epsilonestinterpolativeinequality}), we may now apply Lemma \ref{Decomp}. In particular, there exists $p<p_1<\infty$ and $0<\delta_{1}<-s_{p_1}$ such that  for any $\lambda>0$ there exists functions 
${U}^{1,\lambda}\in \dot{B}^{s_{p_1}+\delta_{1}}_{p_1,p_1}(\mathbb{R}^3)\cap \dot{B}^{s_{p}}_{p,\infty}(\mathbb{R}^3)$ and ${U}^{2,\lambda}\in L_2(\mathbb{R}^3)\cap \dot{B}^{s_{p}}_{p,\infty}(\mathbb{R}^3)$ with
 
\begin{equation}\label{udecomp1lambda}
u^{2,N}= {U}^{1,\lambda}+{U}^{2,\lambda},
\end{equation}
\begin{equation}\label{baru_1estlambda}
\|{U}^{1,\lambda}\|_{\dot{B}^{s_{p_1}+\delta_1}_{p_1,p_1}}^{p_1}\leq \lambda^{p_1-p} \|u^{2,N}\|_{\dot{B}^{s_{p,\alpha}}_{p,p}}^{p}\leq$$$$\leq \lambda^{p_1-p}N^{-\beta(p)p} C(p,p_0,\|\mathcal{F}^{-1}\varphi\|_{L_1},\|g\|_{\dot{B}^{s_{p}}_{p,\infty}}),
\end{equation}

\begin{equation}\label{baru_2est.21lambda}
\|{U}^{1,\lambda}\|_{\dot{B}^{s_{p}}_{p,\infty}}\leq  C(\|\mathcal{F}^{-1}\varphi\|_{L_1})\|u^{2,N}\|_{\dot{B}^{s_{p}}_{p,\infty}}$$$$\leq  C(\|\mathcal{F}^{-1}\varphi\|_{L_1})\|g\|_{\dot{B}^{s_{p}}_{p,\infty}},
\end{equation}
\begin{equation}\label{baru_2estlambda}
\|{U}^{2,\lambda}\|_{L_2}^2\leq C(p,p_1,\|\mathcal{F}^{-1}\varphi\|_{L_1}) \lambda^{2-p}\|u^{2,N}\|_{\dot{B}^{s_{p,\alpha}}_{p,p}}^p\leq$$$$\leq  \lambda^{2-p} N^{-\beta(p)p}C(p,p_0,p_1,\|\mathcal{F}^{-1}\varphi\|_{L_1},\|g\|_{\dot{B}^{s_{p}}_{p,\infty}})  
\end{equation} 
and 
\begin{equation}\label{baru_2est.2lambda}
\|{U}^{2,\lambda}\|_{\dot{B}^{s_{p}}_{p,\infty}}\leq  C(\|\mathcal{F}^{-1}\varphi\|_{L_1})\|u^{2,N}\|_{\dot{B}^{s_{p}}_{p,\infty}}$$$$\leq  C(\|\mathcal{F}^{-1}\varphi\|_{L_1})\|g\|_{\dot{B}^{s_{p}}_{p,\infty}}.
\end{equation}
Taking $\lambda=N^{\kappa}$ 
gives that $u_{0}= u^{1,N}+ U^{1,N^{\kappa}}+ U^{2, N^{\kappa}}$ with $u^{1,N} \in \dot{B}^{s_{p_0}+\delta}_{p_0,\infty}(\mathbb{R}^3)\cap \dot{B}^{s_{p}}_{p,\infty}(\mathbb{R}^3)$, $U^{1,N^{\kappa}}\in \dot{B}^{s_{p_1}+\delta_1}_{p_1,p_1}(\mathbb{R}^3)\cap\dot{B}^{s_{p}}_{p,\infty}(\mathbb{R}^3)$ and $U^{2,N^{\kappa}}\in L_{2}(\mathbb{R}^3) \cap \dot{B}^{s_{p}}_{p,\infty}(\mathbb{R}^3).$  Furthermore, \begin{equation}\label{baru_1estinfinityrecallrecall}
\|{u}^{1,N}\|_{\dot{B}^{s_{p_0}+\delta}_{p_0,\infty}}^{p_0}\leq N^{p_0-p} \|g\|_{\dot{B}^{s_{p}}_{p,\infty}}^{p},
\end{equation}
\begin{equation}\label{baru_1est.1infinityrecallrecall}
\|{u}^{1,N}\|_{\dot{B}^{s_{p}}_{p,\infty}}\leq  C(\|\mathcal{F}^{-1}\varphi\|_{L_1})\|g\|_{\dot{B}^{s_{p}}_{p,\infty}},
\end{equation}
\begin{equation}\label{baru_1estkappa}
\|{U}^{1,N^{k}}\|_{\dot{B}^{s_{p_1}+\delta_1}_{p_1,p_1}}^{p_1}\leq N^{\kappa(p_1-p)-\beta(p)p} C(p,p_0,\|\mathcal{F}^{-1}\varphi\|_{L_1},\|g\|_{\dot{B}^{s_{p}}_{p,\infty}}),
\end{equation}

\begin{equation}\label{baru_2est.21kappa}
\|{U}^{1,N^{\kappa}}\|_{\dot{B}^{s_{p}}_{p,\infty}}\leq  C(\|\mathcal{F}^{-1}\varphi\|_{L_1})\|g\|_{\dot{B}^{s_{p}}_{p,\infty}},
\end{equation}

\begin{equation}\label{baru_2estkappa}
\|{U}^{2,N^{\kappa}}\|_{L_2}^2\leq N^{\kappa(2-p)-\beta(p)p} C(p,p_0,p_1,\|\mathcal{F}^{-1}\varphi\|_{L_1},\|g\|_{\dot{B}^{s_{p}}_{p,\infty}})  
\end{equation}
and 
\begin{equation}\label{baru_2estalpha}
\|{U}^{2,N^{\kappa}}\|_{\dot{B}^{s_{p}}_{p,\infty}}\leq  C(\|\mathcal{F}^{-1}\varphi\|_{L_1})\|g\|_{\dot{B}^{s_{p}}_{p,\infty}}.
\end{equation}
 
Let $p_{2}=2 \max(p_0,p_{1})$ and $\delta_{2}= \frac{\min(\delta, \delta_{1})}{2}$. 
From Remark \ref{besovremark2} and (\ref{baru_1estinfinityrecallrecall})-(\ref{baru_1est.1infinityrecallrecall}), we have that $u^{1,N}\in \dot{B}^{s_{p_2}}_{p_2,\infty}(\mathbb{R}^3) \cap \dot{B}^{s_{p_2}+\delta}_{p_2,\infty}(\mathbb{R}^3)$ with estimates 
\begin{equation}\label{baru_1estinfinityrecallrecallembed}
\|{u}^{1,N}\|_{\dot{B}^{s_{p_2}+\delta}_{p_2,\infty}}\leq c(p_2)N^{\frac{p_0-p}{p_0}} \|g\|_{\dot{B}^{s_{p}}_{p,\infty}}^{\frac{p}{p_0}}
\end{equation}
and
\begin{equation}\label{baru_1est.1infinityrecallrecallembed}
\|{u}^{1,N}\|_{\dot{B}^{s_{p_2}}_{p_2,\infty}}\leq  C(\|\mathcal{F}^{-1}\varphi\|_{L_1},p_2)\|g\|_{\dot{B}^{s_{p}}_{p,\infty}}.
\end{equation}
we may apply Proposition \ref{interpolativeinequalitybahourichemindanchin} with
$s_{1}= s_{p_2},$
$s_{2}= s_{p_2}+\delta$
and $\theta=1-\frac{\delta_{2}}{\delta} \in ]0,1[.$ In particular this gives for any $f\in \mathcal{S}^{'}_{h}$:
\begin{equation}\label{interpolativeinequalitydelta2}
\|f\|_{\dot{B}^{s_{p_2}+\delta_2}_{p_2,1}}\leq c(p_2,\delta,\delta_{2})\|f\|_{\dot{B}^{s_{p_2}}_{p,\infty}}^{1-\frac{\delta_{2}}{\delta}}\|f\|_{\dot{B}^{s_{p_2}+\delta}_{p,\infty}}^{\frac{\delta_2}{\delta}}.
\end{equation}
From Remark \ref{besovremark2}, we see that $\dot{B}^{s_{p_2}+\delta_2}_{p_2,1}(\mathbb{R}^3)\hookrightarrow \dot{B}^{s_{p_2}+\delta_2}_{p_2,p_2}(\mathbb{R}^3) $. This,  and (\ref{baru_1estinfinityrecallrecallembed})-(\ref{interpolativeinequalitydelta2}) imply that
\begin{equation}\label{u1Nestdelta2}
\|{u}^{1,N}\|_{\dot{B}^{s_{p_2}+\delta_2}_{p_2,p_2}}\leq N^{\frac{\delta_{2}(p_0-p)}{\delta p_0}} C(p_2, p,p_0, \delta, \delta_2,\|\mathcal{F}^{-1}\varphi\|_{L_1},\|g\|_{\dot{B}^{s_{p}}_{p,\infty}}).
\end{equation}
Using identical reasoning, it can also be inferred that
\begin{equation}\label{UNkappaestdelta2}
\|{U}^{1,N^{k}}\|_{\dot{B}^{s_{p_2}+\delta_2}_{p_2,p_2}}\leq N^{\frac{\delta_{2}(\kappa(p_1-p)-\beta(p)p)}{\delta_1p_1}} C(p,p_0,p_2,p_1, \delta, \delta_1,\|\mathcal{F}^{-1}\varphi\|_{L_1},\|g\|_{\dot{B}^{s_{p}}_{p,\infty}}).
\end{equation}
The choice $$\kappa=\frac{1}{p_1-p}\Big(p\beta(p)+ \frac{\delta_1p_1(p_0-p)}{\delta p_0}\Big)$$ 
implies \begin{equation}\label{besovnormestsum}
\|u^{1,N}+U^{1,N^{\kappa}}\|_{\dot{B}^{s_{p_2}+\delta_2}_{p_2,p_2}}\leq N^{\frac{\delta_{2}(p_0-p)}{\delta p_0}}C(p,p_0,p_{2},p_1,\delta,\delta_1,\delta_2,\|\mathcal{F}^{-1}\varphi\|_{L_1},\|g\|_{\dot{B}^{s_{p}}_{p,\infty}}).
\end{equation}
It is also the case that
\begin{equation}\label{besovnormestsum1}
\|u^{1,N}+U^{1,N^{\kappa}}\|_{\dot{B}^{s_{p}}_{p,\infty}}\leq C(\|\mathcal{F}^{-1}\varphi\|_{L_1})\|g\|_{\dot{B}^{s_{p}}_{p,\infty}}.
\end{equation}

To establish the decomposition of Theorem we
  define $ \bar{g}^{N}$ to be the Leray projector applied to $u^{1,N}+U^{1,N^{\kappa}}$ and $\widetilde{g}^{N}$ to be the Leray projector applied to ${U}^{2,N^{\kappa}}$. Note that the Leray projector  is a continuous linear operator on the homogeneous  Besov spaces under consideration.
\end{proof}

\setcounter{equation}{0}
\section{ Weak* Stability of Global Weak $\B(\mathbb R^3)$-Solutions }
\subsection{Apriori Estimates }

Let $L_{s,l}(Q_T)$, $W^{1,0}_{s,l}(Q_T)$, 
$W^{2,1}_{s,l}(Q_T)$ be anisotropic 
(or parabolic) Lebesgue and Sobolev spaces 
with  norms
$$\|u\|_{L_{s,l}(Q_T)}=\Big(\int\limits_0^T\|u(\cdot,t)\|_{L_s}^ldt\Big)^\frac 1l,\quad \|u\|_{W^{1,0}_{s,l}(Q_T)}=\|u\|_{L_{s,l}(Q_T)}+\|\nabla u\|_{L_{s,l}(Q_T)},$$
$$\|u\|_{W^{2,1}_{s,l}(Q_T)}=\|u\|_{L_{s,l}(Q_T)}+\|\nabla u\|_{L_{s,l}(Q_T)}+\|\nabla^2 u\|_{L_{s,l}(Q_T)}+\|\partial_tu\|_{L_{s,l}(Q_T)}.$$
\begin{lemma}\label{integabilitynonlinearity}
Assume that $u\in L_{\infty}(0,T; J)\cap L_{2}(0,T;\stackrel{\circ}J{^1_2})$ and that there exists an $\alpha>0$ such that
\begin{equation}\label{Holderu}
\textrm{ess}\sup_{0<t<T}\frac{\|u(\cdot,t)\|_{L_{2}}^2}{t^{\alpha}}<\infty.
\end{equation}
Let $u_0\in \B$ be divergence free and let $V(x,t):=S(t)u_0$. 
 Then
\begin{equation}\label{nonlinestsemigroup}
V\cdot\nabla V\in L_{2,\frac 5 4}(Q_T),
\end{equation}
\begin{equation}\label{nonlinest}
V\cdot\nabla u+u\cdot\nabla V\in L_{\frac{3}{2},\frac{6}{5}}(Q_T),
\end{equation}
and
\begin{equation}\label{ONeilest}
V\otimes u:\nabla u
\in L_1(Q_T).
\end{equation}
\end{lemma}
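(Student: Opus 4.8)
The plan is to estimate each of the three nonlinearities separately, using as the basic tools the smoothing estimates for the heat semigroup in Proposition~\ref{semigroupweakL3} (in particular the $\dot B^{-1/4}_{4,\infty}$-type bounds giving $\|S(t)u_0\|_{L_r}\lesssim t^{-\frac12(1-\frac3r)-0}\|u_0\|_{\B}$ for $r\ge 4$, and $\|\nabla S(t)u_0\|_{L_r}\lesssim t^{-\frac12-\frac12(1-\frac3r)}\|u_0\|_{\B}$), together with the hypothesis \eqref{Holderu} on $u$ and the energy bounds $u\in L_\infty(0,T;L_2)\cap L_2(0,T;\dot J^1_2)$, which by Sobolev embedding give $u\in L_2(0,T;L_6)$ and, by interpolation, $u\in L_{10/3}(Q_T)$ and more generally $u\in L_{s,l}(Q_T)$ along the Ladyzhenskaya--Prodi--Serrin scaling line $\frac3s+\frac2l=\frac32$. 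I will track the time-integrability exponents carefully, since that is where the borderline nature of $\B$ bites.

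First, for \eqref{nonlinestsemigroup}: write $\|V\cdot\nabla V(\cdot,t)\|_{L_2}\le \|V(\cdot,t)\|_{L_\infty}\,\|\nabla V(\cdot,t)\|_{L_2}$ — but $L_\infty$ and $L_2$ are not directly controlled, so instead use $\|V\cdot\nabla V(\cdot,t)\|_{L_2}\le \|V(\cdot,t)\|_{L_r}\|\nabla V(\cdot,t)\|_{L_{q}}$ with $\frac1r+\frac1q=\frac12$ and both $r,q\ge 4$; the natural choice $r=q=4$ fails the Hölder constraint, so take e.g. $r$ slightly above $4$ and $q$ correspondingly. From Proposition~\ref{semigroupweakL3}, $\|V(\cdot,t)\|_{L_r}\lesssim t^{-\frac12+\frac3{2r}}$ and $\|\nabla V(\cdot,t)\|_{L_q}\lesssim t^{-1+\frac3{2q}}$, so $\|V\cdot\nabla V(\cdot,t)\|_{L_2}\lesssim t^{-\frac32+\frac34}=t^{-3/4}$; then $t^{-3/4}\in L_{5/4}(0,T)$ since $\frac34\cdot\frac54<1$. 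This gives \eqref{nonlinestsemigroup}. The exponent $5/4$ is exactly dictated by the $-3/4$ decay rate, which is in turn forced by the $\B$-scaling, so there is no slack here; one should check the endpoint arithmetic honestly, but splitting $r=q=4$ only formally and passing to nearby exponents keeps everything strictly subcritical in time.

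Next, \eqref{nonlinest}: the term $V\cdot\nabla u$ I estimate by $\|V(\cdot,t)\|_{L_6}\|\nabla u(\cdot,t)\|_{L_2}$, giving pointwise-in-$t$ the bound $t^{-1/4}\|\nabla u(\cdot,t)\|_{L_2}$ in $L_{3/2}$; since $\nabla u\in L_2(0,T;L_2)$ and $t^{-1/4}\in L_q$ for $q<4$, Hölder with $\frac12+\frac1q=\frac56$, i.e. $q=3$, works ($\frac14\cdot3<1$), placing $V\cdot\nabla u\in L_{3/2,6/5}(Q_T)$. For $u\cdot\nabla V$ I use $\|u(\cdot,t)\|_{L_2}\|\nabla V(\cdot,t)\|_{L_6}$; here $\|\nabla V(\cdot,t)\|_{L_6}\lesssim t^{-3/4}$ and $\|u(\cdot,t)\|_{L_2}\le C t^{\alpha/2}$ by \eqref{Holderu}, so the product is $\lesssim t^{\alpha/2-3/4}$, which lies in $L_{6/5}(0,T)$ provided $(\frac34-\frac\alpha2)\cdot\frac65<1$. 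This is where the hypothesis \eqref{Holderu} is essential — without the extra $t^{\alpha/2}$ gain near $t=0$ the term $u\cdot\nabla V$ would just barely fail to be time-integrable to the required power — and I expect $\alpha$ may need to be taken small but positive, with the required bound on $\alpha$ extracted from this inequality (if $\alpha$ is not a priori large enough one can always shrink it, since \eqref{Holderu} is monotone in $\alpha$ downward, or alternatively interpolate $u$ between $L_\infty L_2$ and $L_2 L_6$ to trade regularity for a better time exponent). Finally, \eqref{ONeilest}: write $\int_{Q_T} |V||u||\nabla u|\le \int_0^T \|V(\cdot,t)\|_{L_\infty?}\cdots$ — again $L_\infty$ is unavailable, so instead Hölder in space with three exponents, $\|V\|_{L_r}\|u\|_{L_6}\|\nabla u\|_{L_2}$ with $\frac1r+\frac16+\frac12=1$, i.e. $r=3$; but $r=3<4$ is outside the range of Proposition~\ref{semigroupweakL3}, so instead use $r$ slightly above $4$, $\|u\|_{L_6}$ replaced by a slightly smaller Lebesgue exponent interpolated between $L_2$ and $L_6$, and $\|\nabla u\|_{L_2}$; track that the resulting time exponent on $V$ (around $t^{-1/4-\varepsilon}$) times $\|u(\cdot,t)\|$-factor (bounded, or with a $t^{\alpha/2}$ gain) times $\|\nabla u(\cdot,t)\|_{L_2}$ is integrable on $(0,T)$ by Cauchy--Schwarz, using $\nabla u\in L_2 L_2$ and $t^{-1/4-\varepsilon}\in L_2(0,T)$ for small $\varepsilon$. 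Alternatively, and more cleanly, I would invoke an O'Neil / Hölder-in-Lorentz argument exactly as in the $L^{3,\infty}$ theory of \cite{barkerser16,barkersersverak16}, since $\B$ data enjoys $S(t)u_0\in L^{4,\infty}$-type bounds of the same homogeneity; the inclusion \eqref{ONeilest} in $L_1(Q_T)$ is then the analogue of the estimate used there to close the energy inequality. The main obstacle throughout is purely the bookkeeping of time exponents at the $\B$-critical scaling — every estimate is borderline, and one must use either the $t^{\alpha/2}$ gain from \eqref{Holderu} or the $L_2 L_2$ gradient bound via Cauchy--Schwarz to win the last fraction of integrability; there is no room to be cavalier about the endpoints.
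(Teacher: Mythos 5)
Your treatment of the first two inclusions matches the paper's in substance, but two of your side-claims there are wrong and worth flagging. For \eqref{nonlinestsemigroup} the choice $r=q=4$ does \emph{not} fail the H\"older constraint: $\tfrac14+\tfrac14=\tfrac12$, and the paper uses exactly $\|V\cdot\nabla V\|_{L_2}\le\|V\|_{L_4}\|\nabla V\|_{L_4}\lesssim t^{-3/4}\|u_0\|_{\B}^2$; your ``fix'' of taking $r>4$ forces the conjugate exponent $q<4$ \emph{outside} the stated range of Proposition~\ref{semigroupweakL3}, so it makes things worse, not better (the conclusion $t^{-3/4}\in L_{5/4}(0,T)$ is nonetheless right). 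Likewise, for $u\cdot\nabla V$ the hypothesis \eqref{Holderu} is \emph{not} needed: $\|u\cdot\nabla V\|_{L_{3/2}}\le\|u\|_{L_2}\|\nabla V\|_{L_6}\lesssim t^{-3/4}\|u\|_{L_{2,\infty}(Q_T)}\|u_0\|_{\B}$ and $\tfrac34\cdot\tfrac65=\tfrac9{10}<1$, so $t^{-3/4}\in L_{6/5}(0,T)$ already; your inequality $(\tfrac34-\tfrac\alpha2)\tfrac65<1$ holds at $\alpha=0$.

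The genuine gap is in \eqref{ONeilest}, which is precisely the estimate the hypothesis \eqref{Holderu} exists for. As written, your closing step does not parse: with $r$ slightly above $4$ the semigroup factor decays like $t^{-1/8-\varepsilon}$ (not $t^{-1/4-\varepsilon}$), and the middle factor $\|u(\cdot,t)\|_{L_s}$ with $s$ near $4$ is not ``bounded'' --- by interpolation it carries $\|u\|_{L_2}^{1/4}\|\nabla u\|_{L_2}^{3/4}$, so the total power of $\|\nabla u(\cdot,t)\|_{L_2}$ in the integrand is $7/4$ and a plain Cauchy--Schwarz pairing of $\|\nabla u\|_{L_2}\in L_2(0,T)$ against $t^{-1/4-\varepsilon}\in L_2(0,T)$ is not available. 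If you carry the H\"older exponents in time correctly (e.g.\ $\tfrac{8}{7}$ and $8$), every such choice of $r$ reduces, by criticality, to the same quantity $\int_0^T t^{-1}\|u(\cdot,t)\|_{L_2}^2\,dt$, and it is \emph{here} that \eqref{Holderu} enters: this integral is $\le\sup_{s}\big(s^{-\alpha}\|u(\cdot,s)\|_{L_2}^2\big)\int_0^T t^{\alpha-1}dt<\infty$ for any $\alpha>0$. The paper avoids this bookkeeping entirely by using the $L_\infty$ bound you dismissed as unavailable --- $\|V(\cdot,t)\|_{L_\infty}\lesssim t^{-1/2}\|u_0\|_{\B}$ does hold for $\B$ data --- and then applying Cauchy--Schwarz in space-time to $\|V\otimes u\|_{L_2(Q_T)}\|\nabla u\|_{L_2(Q_T)}$, with $\|V\otimes u(\cdot,t)\|_{L_2}^2\lesssim t^{-(1-\alpha)}\sup_s\big(s^{-\alpha}\|u(\cdot,s)\|_{L_2}^2\big)\|u_0\|_{\B}^2$. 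Your alternative appeal to the O'Neil/Lorentz argument of the $L^{3,\infty}$ theory does not transfer verbatim either, since $\|S(t)u_0\|_{L^{3,\infty}}$ is uniformly bounded in that setting whereas here only $t^{1/8}\|S(t)u_0\|_{L_4}$ is; the time weight must be absorbed exactly as above.
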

\begin{proof}
By  the H\"older inequality and Proposition \ref{semigroupweakL3}:
$$\|V\cdot\nabla V\|_{L_2}\leq \|V\|_{L_{4}}\|\nabla V\|_{L_{4}}\leq  \frac{c}{t^{\frac 3 4}}\|u_{0}\|_{\B}^2.$$
From here, (\ref{nonlinestsemigroup}) is easily established.
 Again, by the H\"older inequality and Proposition \ref{semigroupweakL3}:
 $$\|u\cdot\nabla V\|_{L_{\frac{3}{2}}}\leq \|\nabla V\|_{L_{6}}\|u\|_{L_{2}}\leq \frac{c\|u\|_{L_{2,\infty}(Q_T)}}{t^{\frac{3}{4}}}\|u_0\|_{\B}.$$
 From this it is immediate that $u\cdot\nabla V\in L_{\frac{3}{2},\frac{6}{5}}(Q_T).$ 
 Using H\"older's inequality once more, one can verify that
 $$\int\limits_0^T\|V\cdot\nabla u\|_{L_{\frac{3}{2}}}^{\frac{6}{5}}dt\leq (\int\limits_0^T \|\nabla u\|_{L_2}^2 dt)^{\frac{3}{5}} (\int\limits_0^T \|V\|_{L_{6}}^3 dt)^{\frac{2}{5}}.$$
 The desired conclusion is reached by noting that Proposition \ref{semigroupweakL3} gives:
$$\|V\|_{L_{6}}^3\leq \frac{c}{t^{\frac{3}{4}}}\|u_0\|_{\B}^3.$$

The last estimate shows why there are difficulties to prove energy estimate for $u$. 

By  Proposition \ref{semigroupweakL3}, (\ref{Holderu}) and  the H\"older inequality :
$$\int\limits_{0}^T\int\limits_{\mathbb R^3}| V\otimes u:\nabla u |dxd\tau\leq \Big(\int\limits_{0}^T\int\limits_{\mathbb R^3}|\nabla u |^2dxd\tau \Big)^{\frac 1 2}\Big(\int\limits_{0}^T\int\limits_{\mathbb R^3}|V\otimes u |^2dxd\tau \Big)^{\frac 1 2}\leq$$$$\leq
\|u_0\|_{\B}\Big(\int\limits_{0}^T\int\limits_{\mathbb R^3}|\nabla u |^2dxd\tau \Big)^{\frac 1 2}\Big(\int\limits_{0}^T\int\limits_{\mathbb R^3} \frac{1}{\tau^{1-\alpha}}\textrm{ess}\sup_{0<s<T}\Big(\frac{\|u(\cdot,s)\|_{L_2}^2}{s^{\alpha}}\Big)dxd\tau \Big)^{\frac 1 2}.$$
\end{proof}

The next statement is a direct consequence of Lemma \ref{integabilitynonlinearity} and coercive estimates of solutions to the Stokes problem.
\begin{lemma}\label{higherderivativeandpressure}
Let $v$ be a global weak $\B$-solution with functions $u$ and $q$ as in Definition \ref{globalBesovinf}. Then
\begin{equation}\label{vdecomp}
(u,q)=\sum_{i=1}^2(u^i,p_{i})
\end{equation}
such that for any finite $T$: \begin{equation}\label{vdecompspaces}
(u^i,\nabla p_{i})\in W^{2,1}_{s_i,l_i}(Q_T)\times L_{s_i,l_i}(Q_T)
\end{equation} and
\begin{equation}\label{integindices}
(s_1,l_1)=( 9/8,3/2), (s_2,l_2)=( 2,5/4), (s_2,l_2)=(3/2,6/5).
\end{equation}
In addition $(u^i, p_{i})$ satisfy the following:
\begin{equation}\label{v_1eqn}
\partial_t u^1-\Delta u^1+\nabla p_{1}= -u\cdot\nabla u,
\end{equation}
\begin{equation}\label{v_2eqn}
\partial_t u^2-\Delta u^2+\nabla p_{2}= -V\cdot\nabla V
\end{equation}
\begin{equation}\label{v_3eqn}
\partial_t u^3-\Delta u^3+\nabla p_{3}= -V\cdot\nabla u-u\cdot\nabla V
\end{equation}
in $Q_\infty$, and 
\begin{equation}\label{freediv}
{\rm div}\,u^i=0
\end{equation}
in $Q_\infty$ for $i=1,2,3$,
\begin{equation}\label{partinitial}
u^i(\cdot,0)=0
\end{equation}
for all $x\in \mathbb R^3$ and $i=1,2,3$.
\end{lemma}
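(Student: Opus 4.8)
The plan is to expand the perturbed system (\ref{perturbdirectsystem}) via the splitting $v=V+u$, decompose the resulting right-hand side into three forcing terms whose integrability is already under control, solve the three associated linear non-stationary Stokes Cauchy problems with zero initial data using the classical coercive estimates for the Stokes operator in anisotropic Lebesgue spaces, and finally identify the given pair $(u,q)$ with the sum of the three pieces by a uniqueness argument for the homogeneous Stokes system.

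First I would rewrite (\ref{perturbdirectsystem}). Since $\mathrm{div}\,V=\mathrm{div}\,u=0$, one has $v\cdot\nabla v=u\cdot\nabla u+V\cdot\nabla V+(V\cdot\nabla u+u\cdot\nabla V)$, so $(u,q)$ solves
\begin{equation*}
\partial_t u-\Delta u+\nabla q=-\,u\cdot\nabla u-V\cdot\nabla V-(V\cdot\nabla u+u\cdot\nabla V),\qquad\mathrm{div}\,u=0
\end{equation*}
in $Q_T$, with $u(\cdot,0)=0$ in the strong $L_2$ sense (Remark \ref{holderuinteg}). Setting $F_1:=-u\cdot\nabla u$, $F_2:=-V\cdot\nabla V$ and $F_3:=-(V\cdot\nabla u+u\cdot\nabla V)$, Lemma \ref{integabilitynonlinearity} already gives $F_2\in L_{2,5/4}(Q_T)$ and $F_3\in L_{3/2,6/5}(Q_T)$. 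For $F_1$, the hypothesis $u\in L_{\infty}(0,T;J)\cap L_2(0,T;\stackrel{\circ} J{^1_2})$ together with the embedding $\stackrel{\circ} J{^1_2}\hookrightarrow L_6$ and interpolation yields $u\in L_6(0,T;L_{18/7})$; then H\"older's inequality in space ($\tfrac89=\tfrac{7}{18}+\tfrac12$) and in time ($\tfrac23=\tfrac16+\tfrac12$) give
\begin{equation*}
\|u\cdot\nabla u\|_{L_{9/8,3/2}(Q_T)}\le\|u\|_{L_6(0,T;L_{18/7})}\,\|\nabla u\|_{L_2(Q_T)}<\infty ,
\end{equation*}
that is, $F_1\in L_{9/8,3/2}(Q_T)$.

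Next, for each $i=1,2,3$ I would invoke the coercive estimates for the Cauchy problem of the non-stationary Stokes system: since $F_i\in L_{s_i,l_i}(Q_T)$ with $1<s_i,l_i<\infty$, there is a unique solution $(u^i,\nabla p_i)$ of $\partial_t u^i-\Delta u^i+\nabla p_i=F_i$, $\mathrm{div}\,u^i=0$ in $Q_T$, $u^i(\cdot,0)=0$, lying in $W^{2,1}_{s_i,l_i}(Q_T)\times L_{s_i,l_i}(Q_T)$ and satisfying $\|u^i\|_{W^{2,1}_{s_i,l_i}(Q_T)}+\|\nabla p_i\|_{L_{s_i,l_i}(Q_T)}\le C\|F_i\|_{L_{s_i,l_i}(Q_T)}$; this produces (\ref{vdecompspaces}), (\ref{v_1eqn})--(\ref{v_3eqn}), (\ref{freediv}) and (\ref{partinitial}). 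To obtain (\ref{vdecomp}) itself, I would set $w:=u-\sum_{i=1}^{3}u^i$ and $\pi:=q-\sum_{i=1}^{3}p_i$; then $(w,\pi)$ is a distributional solution of the homogeneous Stokes system $\partial_t w-\Delta w+\nabla\pi=0$, $\mathrm{div}\,w=0$ in $Q_T$ with $w(\cdot,0)=0$, and since $u$ and the $u^i$ all lie in parabolic Lebesgue classes over $Q_T$, applying the Leray projector $\mathbb{P}$ reduces this to the heat Cauchy problem $\partial_t w-\Delta w=0$, $w(\cdot,0)=0$, so that $w\equiv 0$ and $\nabla\pi\equiv 0$; hence $(u,q)=\sum_{i=1}^{3}(u^i,p_i)$, the pressure being determined up to a function of $t$.

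The step I expect to be the main obstacle is this last one: bringing the given $u$ --- which a priori lies only in the energy class with the extra decay (\ref{Holderu}), its nonlinearity controlled solely through Lemma \ref{integabilitynonlinearity} --- and the maximal-regularity solutions $u^i$ into a single functional framework in which their difference is a legitimate distributional solution of the homogeneous Stokes problem with vanishing initial trace, and then quoting a uniqueness statement valid in that class. A more routine, secondary point is checking that the weak formulation of (\ref{perturbdirectsystem}) together with the weak continuity property (\ref{vweakcontinuity}) genuinely licenses the term-by-term identification of $\partial_t u$ with the sum of the three Stokes forcings in the sense of distributions, and that the pressures combine correctly given that $q$ is known only to lie in $L_{\frac 32,\mathrm{loc}}(Q_T)$.
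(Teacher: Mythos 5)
Your proposal is correct and follows exactly the route the paper intends: the paper gives no detailed proof, stating only that the lemma is ``a direct consequence of Lemma \ref{integabilitynonlinearity} and coercive estimates of solutions to the Stokes problem,'' which is precisely your scheme of splitting the forcing into $F_1,F_2,F_3$, applying maximal regularity to each Stokes Cauchy problem, and identifying the sum with $(u,q)$ by uniqueness. Your added details (the interpolation $u\in L_6(0,T;L_{18/7})$ giving $u\cdot\nabla u\in L_{9/8,3/2}(Q_T)$, and the reduction of the uniqueness step to the heat equation via the Leray projector) are the standard way to fill in what the paper leaves implicit.
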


Before the next Lemma let us introduce some notation. Let $u,\,v$ and $u_0$ be as in Definition \ref{globalBesovinf}. Let $u_0= \bar u_0^{N}+\widetilde u^{N}_0$ denote the splitting from Proposition \ref{besovinfinityinterpolationtheorem}. In particular, $4<p_2<\infty$, $0<\delta_{2}<-s_{p_2}$, $\gamma_{1}>0$ and $\gamma_{2}>0$ are such that   for any $N>0$ there exists  weakly divergence free functions 
$\bar{u}_0^{N}\in \dot{B}^{s_{p_2}+\delta_{2}}_{p_2,p_2}(\mathbb{R}^3)\cap \B(\mathbb{R}^3)$ and $\widetilde{u}^{N}_0\in L_2(\mathbb{R}^3)\cap \B(\mathbb{R}^3) $ with
 
\begin{equation}\label{Vdecomp1recall}
u_0= \bar{u}_0^{N}+\widetilde{u}_0^{N},
\end{equation}
\begin{equation}\label{barV_1estrecall}
\|\bar{u}_0^{N}\|_{\dot{B}^{s_{p_2}+\delta_{2}}_{p_2,p_2}}\leq N^{\gamma_{1}}C( \|u_0\|_{\B}),
\end{equation}
\begin{equation}\label{barV_2estrecall}
\|\widetilde{u}_0^{N}\|_{L_2}\leq N^{-\gamma_{2}}C(\|u_0\|_{\B}).
\end{equation} 
Furthermore, 
\begin{equation}\label{barV_1est.1recall}
\|\bar{u}_0^{N}\|_{\B}\leq C( \|u_0\|_{\B}),
\end{equation}
\begin{equation}\label{barV_2est.1recall}
\|\widetilde{u}_0^{N}\|_{\B}\leq C(  \|u_0\|_{\B}).
\end{equation} 
Let us define the following: 
 \begin{equation}\label{V>N}
 \bar V^{N}(\cdot,t):=S(t)\bar u^{N}_0(\cdot,t),
 \end{equation}
 \begin{equation}\label{V<N}
 \tilde V^{N}(\cdot,t):=S(t)\tilde u^{N}_0(\cdot,t)
 \end{equation}
 and
 \begin{equation}\label{w>N}
 w^{N}(x,t):=u(x,t)+\tilde V^{N}(x,t).
 \end{equation}
\begin{lemma}\label{energyinequalitysplitting} 
 In the above notation, we have the following global energy inequality 
 \begin{equation}\label{w>Nenergyineq}
\|w^N(\cdot,t)\|_{L_2}^2+2\int\limits_0^t\int\limits_{\mathbb R^3} |\nabla w^N(x,t')|^2 dxdt'\leq$$$$\leq\|\tilde u_0^N\|_{L_2}^2+ 
2 \int_0^t\int\limits_{\mathbb R^3}(\bar V^N\otimes w^N+\bar V^N\otimes \bar V^N):\nabla w^N dxdt'
\end{equation}
that is valid for positive $N$ and $t$.
\end{lemma}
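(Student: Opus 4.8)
The plan is to derive \eqref{w>Nenergyineq} from the energy inequality \eqref{venergyineq} satisfied by $u$, together with the linear heat equation satisfied by $\tilde V^N$, by a careful bookkeeping of the quadratic terms. First I would record that since $w^N = u + \tilde V^N$ and $V = \bar V^N + \tilde V^N$, we have $v = V + u = \bar V^N + w^N$, so the perturbed system \eqref{perturbdirectsystem} can be rewritten (in the sense of distributions) as
\begin{equation}
\partial_t w^N - \Delta w^N + (\bar V^N + w^N)\cdot\nabla(\bar V^N + w^N) = -\nabla q,
\end{equation}
using that $\tilde V^N$ solves the heat equation $\partial_t \tilde V^N - \Delta \tilde V^N = 0$ with $\tilde V^N(\cdot,0) = \tilde u_0^N$. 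The natural candidate inequality is then exactly \eqref{w>Nenergyineq}, obtained by formally testing this equation with $w^N$ and using $\mathrm{div}\, w^N = 0$ to kill the pressure term and the term $(w^N\cdot\nabla w^N)\cdot w^N$.

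Since such an energy identity cannot be obtained by literally testing with $w^N$ (insufficient regularity, and $w^N$ is not compactly supported), I would instead proceed by combining known inequalities. The key algebraic step: start from \eqref{venergyineq} for $u$, add to it the energy identity for the heat flow, $\|\tilde V^N(\cdot,t)\|_{L_2}^2 + 2\int_0^t\int_{\mathbb R^3}|\nabla \tilde V^N|^2\,dxdt' = \|\tilde u_0^N\|_{L_2}^2$, and add twice the cross term $\int_0^t\int_{\mathbb R^3}\big(\partial_t u\cdot \tilde V^N + \nabla u:\nabla\tilde V^N\big)\,dxdt'$, which one evaluates using the equation \eqref{perturbdirectsystem} for $u$ tested against the smooth, rapidly decaying function $\tilde V^N$ (this pairing is legitimate by Lemma \ref{higherderivativeandpressure}, which gives $u \in W^{2,1}_{s_i,l_i}$ and hence enough regularity, and by the decay estimates \eqref{semigroupsolonnikovest} for $\tilde V^N$). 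The sum of the left-hand sides reconstitutes $\|w^N(\cdot,t)\|_{L_2}^2 + 2\int_0^t\int|\nabla w^N|^2$ up to the cross terms $2\int_0^t\int \nabla u:\nabla\tilde V^N$ which are absorbed, and $\int(\partial_t u)\cdot\tilde V^N + \int(\partial_t\tilde V^N)\cdot u$, whose time-integral is $\int_{\mathbb R^3} u(\cdot,t)\cdot\tilde V^N(\cdot,t)\,dx$ by \eqref{vweakcontinuity} and the continuity of $\tilde V^N$ in $L_2$ (with $u(\cdot,0) = 0$). On the right-hand side one collects all the nonlinear contributions $(v\cdot\nabla v)\cdot\tilde V^N$, $(V\otimes u + V\otimes V):\nabla u$, etc.; substituting $V = \bar V^N + \tilde V^N$, $v = \bar V^N + w^N$ everywhere and using $\mathrm{div} = 0$ to integrate by parts and cancel the cubic-in-$w^N$ and cubic-in-$\tilde V^N$ terms, the remaining terms should collapse precisely to $2\int_0^t\int(\bar V^N\otimes w^N + \bar V^N\otimes\bar V^N):\nabla w^N$.

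The main obstacle will be the rigorous justification of these integrations by parts and cancellations: none of the objects $w^N$, $\bar V^N$ lie in the energy class for arbitrary $\B$ data (indeed $\bar V^N \in \dot B^{s_{p_2}+\delta_2}_{p_2,p_2}$ only), so each bilinear pairing must be shown to be well-defined and each boundary term at spatial infinity must be shown to vanish. I would handle this by the same device already used in Lemma \ref{integabilitynonlinearity}: verify that every nonlinear term lies in an appropriate $L_{s,l}(Q_T)$ space using Hölder, the decay estimates \eqref{semigroupsolonnikovest} of Proposition \ref{semigroupweakL3} applied to $\bar V^N$ and $\tilde V^N$ separately (exploiting $\bar u_0^N \in \dot B^{s_{p_2}+\delta_2}_{p_2,p_2}$ and $\tilde u_0^N \in L_2$), and the bounds $u \in L_\infty(0,T;J)\cap L_2(0,T;\stackrel{\circ}J{}^1_2)$ together with \eqref{perturbedspaces}; then a standard mollification/truncation argument in space and time, passing to the limit using these uniform integrability bounds, makes the formal computation rigorous. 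A secondary point to be careful about is that \eqref{venergyineq} is only an inequality, so the argument must be arranged so that the error terms enter with a favourable sign — which they do, since we are only adding an identity (the heat energy identity) and an identity (the cross term computed from the equation), so the inequality direction in \eqref{w>Nenergyineq} is inherited directly from \eqref{venergyineq}.
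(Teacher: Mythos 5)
Your argument is correct in outline, but it follows a genuinely different route from the paper. The paper does not start from the global energy inequality (\ref{venergyineq}) for $u$ at all: it starts from the \emph{local} energy inequality (\ref{localenergyinequality}) for the pair $(v,q)$, writes $v=\bar V^N+w^N$, subtracts the identities (\ref{crosstermexpression1})--(\ref{crosstermexpression2}) for $\int w^N\cdot\bar V^N\varphi$ and $\int|\bar V^N|^2\varphi$ to obtain a local energy inequality for $w^N$, and only then globalizes --- first in space, taking $\varphi=\varphi_1(t)\varphi_R(x)$ and sending $R\to\infty$ (this is where the pressure must be controlled globally, via the Riesz-transform splitting (\ref{pressureRiesz})--(\ref{pressure spaces})), and then in time, approximating the indicator of $[\varepsilon,t]$ and using $\|w^N(\cdot,s)-\tilde u_0^N\|_{L_2}\to 0$ as $s\to 0$ to produce the term $\|\tilde u_0^N\|_{L_2}^2$. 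You instead work with the complementary splitting $w^N=u+\tilde V^N$ entirely at the global level: inequality (\ref{venergyineq}) for $u$, plus the exact heat energy identity for $\tilde V^N$ (which is where $\|\tilde u_0^N\|_{L_2}^2$ enters for you), plus twice the cross term obtained by pairing the equation for $u$ with $\tilde V^N$. The algebra does close (the nonlinear contributions sum to $-2\int(v\cdot\nabla v)\cdot w^N=2\int(\bar V^N\otimes w^N+\bar V^N\otimes\bar V^N):\nabla w^N$), and since you add only identities to an inequality the direction is preserved. What your route buys is that the pressure pairs only with the divergence-free field $\tilde V^N$, so no spatial truncation of $w^N$ itself is needed; what it costs is the justification of the product rule for $\frac{d}{dt}\int u\cdot\tilde V^N$ via the duality pairing with $\partial_t u$ (legitimate through Lemma \ref{higherderivativeandpressure}, in the spirit of the cross-term identity (\ref{crosstermB}) used later in the paper), and the vanishing of $\int\nabla q\cdot\tilde V^N$, which still requires the global pressure integrability (\ref{pressure spaces}) --- note $\tilde V^N$ is \emph{not} rapidly decaying, only in $L_2\cap L_\infty$ for $t>0$, and $\|\nabla\tilde V^N(\cdot,t)\|_{L_\infty}\sim t^{-5/4}$ near $t=0$, so the nonlinear pairings must be arranged, as in Lemma \ref{integabilitynonlinearity}, to use only $\nabla\tilde V^N\in L_2(Q_T)$ and $\|\tilde V^N(\cdot,t)\|_{L_4}\lesssim t^{-1/8}$.
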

\begin{proof}
Let us mention that with Lemma \ref{integabilitynonlinearity} in hand, the proof of Lemma \ref{energyinequalitysplitting} follows from very similar reasoning as presented in \cite{barkerser16}-\cite{barkersersverak16}. We provide all the details here for the convenience of the reader.

The first stage is showing that $w^N$ satisfies the local energy inequality. Let us briefly sketch how this can be done. Let $\varphi\in C^\infty_0(Q_\infty)$ be a positive function. Observe that the assumptions in Definition \ref{globalBesovinf} imply that the following function
\begin{equation}\label{crossterm}
t\rightarrow \int\limits_{\Omega}w^N(x,t)\cdot\bar V^N(x,t)\varphi(x,t)dx
\end{equation}
is continuous for all $t\geq 0$.
It is not so difficult to show that this term has the following expression:
\begin{equation}\label{crosstermexpression1}
\int\limits_{\mathbb R^3}w^N(x,t)\cdot \bar V^N(x,t)\varphi(x,t)dx= \int\limits_0^t\int\limits_{\mathbb R^3}(w^N\cdot \bar V^N)(\Delta\varphi+\partial_t\varphi) dxdt'-$$$$-
2\int\limits_0^t\int\limits_{\mathbb R^3}\nabla w^N:\nabla \bar V^N\varphi dxdt'+\int\limits_0^t\int\limits_{\mathbb R^3} \bar V^N\cdot\nabla\varphi q dxdt'+$$$$+
\frac 1 2\int\limits_0^t\int\limits_{\mathbb R^3} (|v|^2-|w^N|^2)v\cdot\nabla\varphi dxdt'-$$$$-\int\limits_0^t\int\limits_{\mathbb R^3}(\bar V^N\otimes w^N+\bar V^N\otimes \bar V^N):\nabla w^N\varphi dxdt'-$$$$-
\int\limits_0^t\int\limits_{\mathbb R^3}(\bar V^N\otimes \bar V^N+\bar V^N\otimes w^N):(w^N\otimes \nabla\varphi)dxdt'.
\end{equation}
It is also readily shown that
\begin{equation}\label{crosstermexpression2}
\int\limits_{\mathbb R^3}|\bar V^N(x,t)|^2\varphi(x,t)dx=\int\limits_0^t\int\limits_{\mathbb R^3}|\bar V^N(x,t')|^2(\Delta \varphi(x,t')+\partial_t\varphi(x,t'))dxdt'-$$$$-2\int\limits_0^t\int\limits_{\mathbb R^3} |\nabla\bar V^N|^2\varphi dxdt'.
\end{equation}
Using (\ref{localenergyinequality}), together with (\ref{crosstermexpression1})-(\ref{crosstermexpression2}), we obtain that for all $t\in ]0,\infty[$ and for all non negative functions $\varphi\in C_{0}^{\infty}(Q_\infty)$:
\begin{equation}\label{localenergyinequalityW>N}
\int\limits_{\mathbb R^3}\varphi(x,t)|w^N(x,t)|^2dx+2\int\limits_{0}^t\int\limits_{\mathbb R^3}\varphi |\nabla w^N|^2 dxdt^{'}\leq$$$$\leq
\int\limits_{0}^{t}\int\limits_{\mathbb R^3}|w^N|^2(\partial_{t}\varphi+\Delta\varphi)+2qw^N\cdot\nabla\varphi +|w^N|^2v\cdot\nabla\varphi dxdt'+$$$$+
2\int\limits_0^t\int\limits_{\mathbb R^3} (\bar V^N\otimes \bar V^N+\bar V^N\otimes w^N):(\nabla w^N\varphi+w^N\otimes\nabla\varphi)dxdt'
\end{equation}
In the next part of the proof, let $\varphi(x,t)=\varphi_{1}(t)\varphi_{R}(x)$. Here, $\varphi_1\in C_{0}^{\infty}(0,\infty)$ and $\varphi_{R}\in C_0^{\infty}(B(2R))$ are positive functions. Moreover, $\varphi_{R}=1$ on $B(R)$, $0\leq\varphi_{R}\leq 1$, 
$$|\nabla \varphi_R|\leq c/R,\,\,\,\,\,\, |\nabla^2\varphi_R|\leq c/R^2.$$
Since $\widetilde u_0^N\in [C_{0,0}^{\infty}(\mathbb R^3)]^{L_{2}(\mathbb R^3)}$, it is obvious that for $\tilde V^N(\cdot,t):=S(t)\widetilde u_0^N(\cdot,t)$ we the energy equality:
\begin{equation}\label{V>Nenergyequality}
\|\tilde V^N(\cdot,t)\|_{L_2}^2+\int\limits_0^t\int\limits_{\mathbb R^3}|\nabla \tilde V^N|^2 dxdt'=\|\widetilde u_0^N\|_{L_2}^2.
\end{equation}
By semigroup estimates, we have  for $2\leq p\leq\infty$, $4\leq q\leq \infty$:
\begin{equation}\label{V>Nsemigroupest}
\|\tilde V^N(\cdot,t)\|_{L_p}\leq \frac{C(p)}{t^{\frac{3}{2}(\frac{1}{2}-\frac{1}{p})}}\|\widetilde u_0^N\|_{L_2},
\end{equation}
\begin{equation}\label{V<Nsemigroupest}
\|\bar V^N(\cdot,t)\|_{L_q}\leq \frac{C(q)}{t^{\frac{3}{2}(\frac{1}{4}-\frac{1}{q})+\frac{1}{8}}}\|\bar u_0^N\|_{\B}.
\end{equation}
Hence, we have $w^N\in C_{w}([0,T]; J)\cap L_{2}(0,T;\stackrel{\circ}{J}{^1_2})$.
Here, $T$ is finite and $C_{w}([0,T]; J)$ denotes continuity with respect to the weak topology.  Using H\"older's inequality and Sobolev embeddings, this implies that 
\begin{equation}\label{wNmultiplicative}
w^N \in L_{p}(Q_{T})\,\,\,\,\,\textrm{for}\,\,\,\,\, 2\leq p\leq {10}/{3}.
\end{equation}
Using these facts, it is obvious that the following limits hold:
$$\lim_{R\rightarrow\infty}\int\limits_{\mathbb R^3}\varphi_R(x)\varphi_1(t)|w^N(x,t)|^2dx+2\int\limits_{0}^t\int\limits_{\mathbb R^3}\varphi_R\varphi_1 |\nabla w^N|^2 dxdt^{'}=$$$$=
\int\limits_{\mathbb R^3}\varphi_1(t)|w^N(x,t)|^2dx+2\int\limits_{0}^t\int\limits_{\mathbb R^3}\varphi_1 |\nabla w^N|^2 dxdt^{'},$$
$$\lim_{R\rightarrow\infty}\int\limits_{0}^t\int\limits_{\mathbb R^3}(|w^N|^2\partial_t\varphi_{1}\varphi_R+2(\bar V^N\otimes w^N+\bar V^N\otimes\bar V^N):\nabla w^N\varphi_{1}\varphi_{R})dxdt'=$$$$=
\int\limits_{0}^t\int\limits_{\mathbb R^3}(|w^N|^2\partial_t\varphi_{1}+2(\bar V^N\otimes w^N+\bar V^N\otimes\bar V^N):\nabla w^N\varphi_{1})dxdt',$$
$$\lim_{R\rightarrow\infty}\int\limits_{0}^t\int\limits_{\mathbb R^3}(|w^N|^2\varphi_{1}\Delta\varphi_R+\varphi_1|w^N|^2 v\cdot\nabla \varphi_{R}+$$$$+2\varphi_1(\bar V^N\otimes w^N+\bar V^N\otimes\bar V^N):(w^N\otimes\nabla\varphi_R))dxdt'=0. $$
Let us focus on the term containing the pressure, namely
$$ \int\limits_0^t\int\limits_{\mathbb R^3}qw^N\cdot\nabla\varphi_R\varphi_1 dxdt^{'}.$$
It is known that the pressure $q$ can be represented as the composition of Riesz transforms $\mathcal{R}$. In particular,
\begin{equation}\label{pressureRiesz}
q=q_1+q_2,
\end{equation}
\begin{equation}\label{pressureRieszq1}
q_{1}=\mathcal{R}_{i}\mathcal{R}_{j}(u_{i}u_j)
\end{equation} 
and
\begin{equation}\label{pressureRieszq2}
q_{2}=\mathcal{R}_{i}\mathcal{R}_{j}(u_{i}V_j+V_i u_j).
\end{equation}
Since $u\in C_{w}([0,T]; J)\cap L_{2}(0,T;\stackrel{\circ}{J}{^1_2})$, it follows from the H\"older inequality and Sobolev embeddings that $u \in L_{p}(Q_{T})$ for $2\leq p\leq {10}/{3}$.
Using this,  Proposition \ref{semigroupweakL3} and  continuity of the Riesz transforms on Lebesgue spaces, we infer that
\begin{equation}\label{pressure spaces}
q_{1} \in L_{\frac 3 2}(Q_{T})\,\,\,\textrm{and}\,\,\,q_{2} \in L_{2}(\mathbb{R}^3 \times ]\epsilon,T[)\,\,\,\textrm{for\,\,any}\,\,\,0<\epsilon<T\,\,\,\textrm{and}\,\,\,T>0.
\end{equation}
From this and (\ref{wNmultiplicative}), we infer that 
$$\lim_{R\rightarrow \infty}\int\limits_0^t\int\limits_{T(R)}qw^N\cdot\nabla\varphi_R\varphi_1 dxdt^{'}=0.$$
Thus, putting everything together, we get for arbitrary positive function $\phi_1\in C_{0}^{\infty}(0,\infty)$:
\begin{equation}\label{w>Nenergyineqcompacttime}
\int\limits_{\mathbb R^3}\varphi_1(t)|w^N(x,t)|^2dx+2\int\limits_{0}^t\int\limits_{\mathbb R^3}\varphi_{1}(t') |\nabla w^N|^2 dxdt^{'}\leq$$$$\leq
\int\limits_{0}^{t}\int\limits_{\mathbb R^3}|w^N|^2\partial_{t}\varphi_{1}+
2(\bar V^N\otimes w^N+\bar V^N\otimes \bar V^N):\nabla w^N\varphi_1 dxdt'
\end{equation}

From Remark  \ref{holderuinteg}, we see that
\begin{equation}\label{w>Nconverginitialdata}
\lim_{t\rightarrow0}\|w^N(\cdot,t)-\tilde u_0^N(\cdot)\|_{L_{2}}=0.
\end{equation}
For $\bar{V}^N$, we have
\begin{equation}\label{VbarupperindexL4}
\|\bar{V}^N(\cdot,t)\|_{L_{4}}\leq \frac{1}{t^{\frac 1 8}}\|\bar{u}^N_0\|_{\B}\leq \frac{1}{t^{\frac 1 8}}C( \|u_0\|_{\B}).
\end{equation}
Using that $\dot{B}^{s_{p_2}+\delta_2}_{p_2,p_2}(\mathbb{R}^3)\hookrightarrow \dot{B}^{-1+\delta_2}_{\infty,\infty}(\mathbb{R}^3)$, together with the heat flow characterisation of homogeneous Besov spaces with negative upper index, we infer that
\begin{equation}\label{Vbarupperindexinfty}
\|\bar{V}^N(\cdot,t)\|_{L_{\infty}}\leq \frac{1}{t^{\frac 1 2-\frac{\delta_2}{2}}}\|\bar{u}^N_0\|_{\dot{B}^{s_{p_2}+\delta_2}_{p_2,p_2}}\leq \frac{N^{\gamma_{1}}}{t^{\frac 1 2-\frac{\delta_2}{2}}}C( \|u_0\|_{\B})
\end{equation}
Thus, we have the following estimates: 
\begin{equation}\label{Barkserest1}
\int\limits_{0}^t\int\limits_{\mathbb R^3}|\bar V^N\otimes w^N:\nabla w^N|dxdt'\leq$$$$\leq
 N^{\gamma_{1}}C( \|u_0\|_{\B})\left(\int\limits_{0}^t\int\limits_{\mathbb R^3} |\nabla w^N|^2dxdt'\right)^{\frac{1}{2}}\left(\int\limits_0^
t\frac{\|w^N(\cdot,\tau)\|^{2}_{L_{2}}}{\tau^{1-\delta_{2}}}d\tau\right)^{\frac{1}{2}}
,
\end{equation}
\begin{equation}\label{Barkserest2}
\int\limits_{0}^t\int\limits_{\mathbb R^3}|\bar V^N\otimes\bar V^N:\nabla w^N| dxdt'\leq Ct^{\frac{1}{4}}C(\|u_0\|_{\B})\left(\int\limits_{0}^t\int\limits_{\mathbb R^3} |\nabla w^N|^2dxdt'\right)^{\frac{1}{2}}.
\end{equation}
Let
$$ \varphi_{\varepsilon}(s):= \left\{ \begin{array}{rl}
\!\!\!\mbox{ $0\,\,\,\,\,\,\,\,\,\,\,\,\, \textrm{if}\,\,\,\, 0\leq s \leq \varepsilon/2$},\\
\mbox{ $ {2(s-(\varepsilon/2))}/{\varepsilon}\,\,\,\,\,\,\,\,\,\,\,\,\, \textrm{if}\,\,\,\, {\varepsilon}/{2}\leq s \leq \varepsilon$},\\
\mbox{$ 1\,\,\,\,\,\,\,\,\,\,\,\,\,\,\,\,\,\,\,\,\,\,\,\,\,\,\, \textrm{if}\,\,\,\, \varepsilon\leq s $}.
\end{array} \right.
$$
Using (\ref{Barkserest1})-(\ref{Barkserest2}) and  by taking suitable approximations of $\varphi_{\varepsilon}$, it can be shown that $\varphi_{1}= \varphi_{\varepsilon}$ is admissible in (\ref{w>Nenergyineqcompacttime}).  From this we obtain that the following inequality is valid for any $\varepsilon>0$ and $t>0$:
\begin{equation}\label{w>Nenergyineqvareepsilon}
\int\limits_{\mathbb R^3}|w^N(x,t)|^2dx+2\int\limits_{0}^t\int\limits_{\mathbb R^3}\varphi_{\varepsilon}(t') |\nabla w^N|^2 dxdt^{'}\leq$$$$\leq
\int\limits_{0}^{t}\int\limits_{\mathbb R^3}|w^N|^2\partial_{t}\varphi_{\varepsilon}+
2(\bar V^N\otimes w^N+\bar V^N\otimes \bar V^N):\nabla w^N\varphi_{\varepsilon} dxdt'
. 
\end{equation}
Using (\ref{w>Nconverginitialdata}), we  can obtain (\ref{w>Nenergyineq}) by letting $\varepsilon$ tend to zero in (\ref{w>Nenergyineqvareepsilon}).

\end{proof}
\begin{itemize}
\item[] \textbf{Proof of Lemma \ref{venergyest}}
\end{itemize}
\begin{proof}
First observe that  $u=w^N- \tilde V^N$. Thus, using (\ref{V>Nenergyequality}) we see that
$$\|u(\cdot,t)\|_{L_{2}}^2+\int\limits_0^t\int\limits_{\mathbb R^3} |\nabla u|^2dxdt'\leq$$$$\leq 2\|\tilde u^N_0\|_{L_{2}}^2+2\|w^N(\cdot,t)\|_{L_{2}}^2+2\int\limits_0^t\int\limits_{\mathbb R^3} |\nabla w^N|^2dxdt'. $$
By (\ref{barV_2estrecall}):
\begin{equation}\label{u0>NL2norm}
\|\tilde u_0^N\|_{L_{2}}^2\leq N^{-2\gamma_{2}}C(\|u_0\|_{\B}).
\end{equation}
From now on, denote 
$$y_N(t):=\|w^N(\cdot,t)\|_{L_{2}}^2.$$
Using (\ref{w>Nenergyineq}), estimates (\ref{Barkserest1})-(\ref{Barkserest2}), (\ref{u0>NL2norm}) and the Young's inequality obtain that
\begin{equation}\label{yNest1}
y_N(t)+\int\limits_0^t\int\limits_{\mathbb R^3} |\nabla w^N|^2dxdt'\leq N^{2\gamma_{1}}C(\|u_0\|_{\B})\left(\int\limits_0^
t\frac{y_{N}(\tau)}{\tau^{1-\delta_{2}}}d\tau\right)+$$$$+(N^{-2\gamma_{2}}+t^{\frac 1 2})C(\|u_0\|_{\B}).
\end{equation}
By Gronwall's Lemma we obtain
\begin{equation}\label{yNest2}
y_N(t)\leq (N^{-2\gamma_{2}}+t^{\frac 1 2})C(\|u_0\|_{\B})\times$$$$\times\exp\Big(N^{2\gamma_{1}}C( \|u_0\|_{\B},\delta_2)t^{\delta_{2}}\Big).
\end{equation}
Hence,
\begin{equation}\label{yNest3}
\|u(\cdot,t)\|_{L_{2}}^2+\int\limits_0^t\int\limits_{\mathbb R^3} |\nabla u|^2dxdt'\leq (N^{-2\gamma_{2}}+t^{\frac 1 2})C( \|u_0\|_{\B},\delta_{2})\times$$$$\times\Big(t^{\delta_{2}}N^{2\gamma_{1}}\exp\Big(N^{2\gamma_{1}}C( \|u_0\|_{\B},\delta_2)t^{\delta_{2}}\Big)+1\Big).
\end{equation}
The conclusion is then easily reached by taking $N=t^{-\kappa}$ with $$0<\kappa<{\delta_{2}}/{2\gamma_{1}}.$$
\end{proof}

\subsection{ Proof of Weak* Stability and Existence of Global Weak $\B(\mathbb R^3)$-Solutions}
Once Lemma \ref{venergyest} and Lemma {\ref{integabilitynonlinearity}} are established, the proof of Theorem \ref{weak stability} is along similar lines to arguments in \cite{barkerser16}-\cite{barkersersverak16}. We present the full details for completeness.
\begin{itemize}
\item[]\textbf{ Proof of Theorem \ref{weak stability}}
\end{itemize} 
\begin{proof}
We have
$$u_{0}^{(k)}\stackrel{*}{\rightharpoonup} u_0$$ in $\B$ and
may assume that 
$$M:=\sup\limits_k\|u_0^{(k)}\|_{\B}<\infty.$$

Firstly, define
$$V^{(k)}(\cdot,t):= S(t)u_0^{(k)}(\cdot,t),\qquad V(\cdot,t):=S(t)u_{0}(\cdot,t).$$
We see that $V^{(k)}$ converges to $V$ on $Q_\infty$ in the sense of distributions.
By Proposition \ref{semigroupweakL3}, we see that
\begin{equation}\label{VkweakL3est}
\|V^{(k)}(\cdot,t)\|_{L^{4}}\leq \frac{CM}{t^{\frac 1 8}},
\end{equation}
\begin{equation}\label{Vksemigroupest}
\|\partial^m_t\nabla^l V^{(k)}(\cdot,t)\|_{L_{r}}\leq \frac{CM}{t^{m+\frac k 2+{\frac 1 2}(1-\frac{3}{r})}}.
\end{equation}
Here $r\in [4,\infty]$.
 For $T<\infty$ and $l\in ]1,\infty[$, we have the compact embedding
$$W^{2,1}_{l}(B(n)\times ]0,T[)\hookrightarrow C([0,T]; L_{l}(B(n))).$$
From this and (\ref{Vksemigroupest}) one immediately infers that for every $n\in\mathbb{N}$ and $l\in ]1,\infty[$:
\begin{equation}\label{V^kstrongconverg}
\partial^m_t\nabla^l V^{(k)}\rightarrow\partial^m_t\nabla^l V\,\,\,{\rm in}\,\,\, C([{1}/{n},n];L_{l}(B(n))).
\end{equation}

From Lemma \ref{venergyest} we have that for any $0<T<\infty$:
\begin{equation}\label{v^kenergybdd}
\sup_{0<t<T}\|u^{(k)}(\cdot,t)\|_{L_{2}}^2+\int\limits_0^T\int\limits_{\mathbb R^3} |\nabla u^{(k)}|^2dxdt'\leq f_{0}(M,T,\beta,\delta_{2}).
\end{equation}
By H\"older's inequality and the Sobolev inequality, this implies that
\begin{equation}\label{multiplicativebound}
\|u^{(k)}\|_{L_{p}(Q_{T})}\leq f_{1}(M,T,\beta,\delta_{2})\,\,\,\,\,\textrm{for\,\,any}\,\,\,\,2\leq p\leq {10}/{3}.
\end{equation}
By means of a Cantor diagonalisation argument, we can find a subsequence such that for any finite $T>0$:
\begin{equation}\label{v_kweak*}
u^{(k)}\stackrel{*}{\rightharpoonup}u\,\,\, {\rm in}\,\,\,L_{2,\infty}(Q_T),
\end{equation}
\begin{equation}\label{gradv_kweak}
\nabla u^{(k)}{\rightharpoonup}\nabla u\,\,\, {\rm in}\,\,\,L_{2}(Q_T). 
\end{equation}
Using (\ref{v_kweak*}), together with (\ref{venergybddscaled}), we also get that for $0<t<1$:
\begin{equation}\label{vzeronearinitialtime}
\|u\|_{L_{2,\infty}(Q_t)}\leq c(M,\delta_{2})t^{\frac{\beta}{2}}.
\end{equation}

From (\ref{v^kenergybdd}) it is easily inferred that
\begin{equation}\label{v^k9/83/2bdd}
\|u^{(k)}\cdot\nabla u^{(k)}\|_{L_{\frac{9}{8},\frac{3}{2}(Q_T)}}\leq
f_{2}(M,T,\beta,\delta_{2}).
\end{equation}
By the same reasoning as in Lemma \ref{integabilitynonlinearity}, we obtain:
\begin{equation}\label{seqnonlin11/7}
\|V^{(k)}\cdot \nabla V^{(k)}\|_{L_{2,\frac 5 4}(Q_T)}\leq f_{3}(M,T),
\end{equation}
\begin{equation}\label{seqnonlin5/43/2}
 \|V^{(k)}\cdot\nabla u^{(k)}+u^{(k)}\cdot\nabla V^{(k)}\|_{L_{\frac{3}{2},\frac{6}{5}}(Q_T)}\leq f_{4}(M,T,\beta,\delta_{2}).
 \end{equation}
 Split $u^{(k)}=\sum_{i=1}^3 u^{i(k)}$ according to Definition \ref{globalBesovinf}, namely (\ref{vdecomp}).
 By coercive estimates for the Stokes system, along with (\ref{v^k9/83/2bdd}) obtain:
 \begin{equation}\label{v^k_1est}
\|u^{1(k)}\|_{W^{2,1}_{\frac 9 8,\frac 3 2}(Q_t)}+\| \nabla p^{(k)}_1\|_{L_{\frac 9 8,\frac 3 2}(Q_T)}\leq Cf_{2}(M,T,\beta,\delta_{2}),
 \end{equation}
 \begin{equation}\label{v^k_2est}
 \|u^{2(k)}\|_{W^{2,1}_{2,\frac 5 4}(Q_T)}+\|\nabla p^{(k)}_2\|_{L_{\frac{11}{7}}(Q_t)}\leq Cf_3(M,T),
 \end{equation}
 \begin{equation}\label{v^k_3est}
 \|u^{3(k)}\|_{W^{2,1}_{\frac 5 4,\frac 3 2}(Q_t)}+\|\nabla p^{(k)}_3\|_{L_{\frac{3}{2},\frac{6}{5}}(Q_T)}
 \leq Cf_{4}(M,T,\beta,\delta_{2}).
 \end{equation}
 By the previously mentioned embeddings, we infer from (\ref{v^k_1est})-(\ref{v^k_3est}) that for any $n\in\mathbb{N}$ we have the following convergence for a certain subsequence:
 \begin{equation}\label{v^kstrongconverg1}
 u^{(k)}\rightarrow u\,\,\,{\rm in}\,\,\, C([0,n]; L_{\frac{9}{8}}(B(n)).
 \end{equation}
 Hence, using (\ref{multiplicativebound}), we infer that for any $s\in ]1,10/3[$
 \begin{equation}\label{v^kstrongconverg2}
 u^{(k)}\rightarrow u\,\,\,{\rm in}\,\,\, L_{s}(B(n)\times ]0,n[).
 \end{equation}
 It is also not so difficult to show that for any $f\in L_{2}$ and for any $n\in\mathbb N$:
 \begin{equation}\label{v^kweakcontconverg}
 \int\limits_{\mathbb R^3} u^{(k)}(x,t)\cdot f(x)dx\rightarrow \int\limits_{\mathbb R^3} u(x,t)\cdot f(x)dx\,\,\,{\rm in}\,\,\,C([0,n]).
 \end{equation}
 Using (\ref{vzeronearinitialtime}) with (\ref{v^kweakcontconverg}), we establish that 
 \begin{equation}\label{vstrongestinitialtime}
 \lim_{t\rightarrow 0}\|u(\cdot,t)\|_{L_{2}}=0.
 \end{equation} 
 Using (\ref{vzeronearinitialtime}), along with the fact that $u \in L_{2,\infty}(Q_T)$ for any $0<T<\infty$, we see that for any $0<T<\infty$
 \begin{equation}\label{uholderestlimit}
 \sup_{0<t<T} \frac{\|u(\cdot,t)\|_{L_2}^2}{t^\beta}<\infty.
 \end{equation}
 All that remains to show is establishing the local energy inequality (\ref{localenergyinequality}) for the limit and establishing the energy inequality (\ref{venergyineq}) for $u$.
 Verifying  the local energy inequality is not so difficult and hence omitted.
 Let us focus on showing (\ref{venergyineq}) for $u$. 
By identical reasoning to Lemma \ref{energyinequalitysplitting}, we have that for an arbitrary positive function $\phi_1(t)\in C_0^{\infty}(0,\infty)$:
\begin{equation}\label{venergyineqcompacttime}
\int\limits_{\mathbb R^3}\varphi_1(t)|u(x,t)|^2dx+2\int\limits_{0}^t\int\limits_{\mathbb R^3}\varphi_{1}(t) |\nabla u|^2 dxdt^{'}\leq$$$$\leq 
\int\limits_{0}^{t}\int\limits_{\mathbb R^3}|u|^2\partial_{t}\varphi_{1}+
2 (V\otimes  u+V\otimes V):\nabla u\varphi_1 dxdt'.
\end{equation}
From (\ref{uholderestlimit}), Lemma \ref{integabilitynonlinearity} and semigroup estimates, we have that 
$$(V\otimes u+V\otimes V):\nabla u\in L_1(Q_T)$$
for any  positive finite $T$.
Using these facts and (\ref{vstrongestinitialtime}), the conclusion  is reached 
in a similar way to the final steps of the proof of Lemma  \ref{energyinequalitysplitting}.
\end{proof}

Let us comment on Corollary \ref{existenseglobal}. Recall that by Proposition \ref{weak*approx}, there exists a solenodial 
 sequence  $u^{(k)}_{0}\in 
 L_{3}(\mathbb R^3)
 $
 such that 
$$u_{0}^{(k)}\stackrel{*}{\rightharpoonup} u_0$$ in $\B$. 
It was shown in \cite{sersve2016} that for any $k$
there exists a global $L_{3}$-weak solution $v^{(k)}$, which satisfies $$\sup_{0<t<\infty}\frac{\|v^{(k)}-S(t)u_{0}^{(k)}\|_{L_2}^2}{t^{\frac 1 2}}<\infty.$$ 
It is also the case that $v^{(k)}$ is a global $\B$ weak solution. Now, Corollary \ref{existenseglobal} follows from Theorem \ref{weak stability}.

\setcounter{equation}{0}
\section{Uniqueness Results}
\subsection{Construction of Strong Solutions}
Before constructing mild solutions we will briefly explain the relevant kernels and their pointwise estimates.

 Let us consider the following Stokes problem:
$$\partial_t v-\Delta v +\nabla q=-\textrm{div}\,F,\qquad {\textrm div }\,v=0$$
in $Q_T$,
$$v(\cdot,0)=0.$$

Furthermore, assume that $F_{ij}\in C^{\infty}_{0}(Q_T).$
 Then a formal solution to the above initial boundary value problem has the form:
$$v(x,t)=\int \limits_0^t\int \limits_{\mathbb R^3}K(x-y,t-s):F(y,s)dyds.$$
The kernel $K$ is derived with the help of the heat kernel $\Gamma$ as follows:
$$\Delta_{y}\Phi(y,t)=\Gamma(y,t),$$
$$K_{mjs}(y,t):=\delta_{mj}\frac{\partial^3\Phi}{\partial y_i\partial y_i\partial y_s}(y,t)-\frac{\partial^3\Phi}{\partial y_m\partial y_j\partial y_s}(y,t).$$
Moreover, the following pointwise estimate is known:
\begin{equation}\label{kenrelKest}
|K(x,t)|\leq\frac{C}{(|x|^2+t)^2}.
\end{equation}
Define
\begin{equation}\label{fluctuationdef}
 G(f\otimes g)(x,t):=\int\limits_0^t\int\limits_{\mathbb{R}^3} K(x-y,t-s): f\otimes g(y,s) dyds.
 \end{equation}
 It what follows, we will use the notation $\pi_{u\otimes u}:= \mathcal{R}_{i}\mathcal{R}_{j}(u_i u_j),$ where $\mathcal{R}$ is a Riesz transform and the summation convention is adopted.
\begin{pro}\label{strongsolution}
 Suppose that $u_0\in \B$ is weakly divergence free. 
There exists a  universal constant $\varepsilon_{3}$ such that if 
\begin{equation}\label{smallnessasmpinfinityB}
\textrm{ess}\sup_{0<t<T}t^{\frac 1 8}\|S(t)u_{0}\|_{L_4}<\varepsilon_{3},
\end{equation}
then 
there exists a   weak $\B$ solution $\widetilde{v}:=V+\widetilde{u}$ to the Navier-Stokes IVP on $Q_{T}$ that satisfies the following properties.
The first property is that $\widetilde{v}$ satisfies  the estimates
\begin{equation}\label{vintegestinfinityB}
\sup_{0<t<T}t^{\frac 1 8}\|\widetilde{v}\|_{L_4}<2\sup_{0<t<T}t^{\frac 1 8}\|V\|_{L_4}:=2M^{(0)} 
\end{equation}
The other property is that for $\lambda\in]0,T[$ and $k=0,1\ldots$, $l=0,1\ldots$:
\begin{equation}\label{vpsmoothinfinityB}
\sup_{(x,t)\in Q_{\lambda,T}}|\partial_{t}^{l}\nabla^{k} \widetilde{v}|+|\partial_{t}^{l}\nabla^{k} \pi_{\widetilde{v}\otimes \widetilde{v}}|\leq c(\lambda,\|u_0\|_{\B},k,l ).
\end{equation}

\end{pro}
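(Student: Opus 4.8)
The plan is to construct $\widetilde{u}$ as a fixed point of the integral equation for the perturbed system and then bootstrap regularity. Writing $v = V + \widetilde{u}$, the Navier–Stokes system becomes, after applying the Leray projector to the integral formulation,
\begin{equation}\label{planfixedpoint}
\widetilde{u} = -G\bigl((V+\widetilde{u})\otimes(V+\widetilde{u})\bigr) + G(V\otimes V) \cdot 0,
\end{equation}
i.e.\ more precisely $\widetilde{u}(t) = -\int_0^t e^{(t-s)\Delta}\mathbb{P}\,\mathrm{div}\,\bigl(V\otimes\widetilde u + \widetilde u\otimes V + \widetilde u\otimes\widetilde u + V\otimes V\bigr)\,ds$, which I will abbreviate using the operator $G$ from \eqref{fluctuationdef}. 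I would work in the scaling-critical Kato-type space $X_T := \{ w : \sup_{0<t<T} t^{1/8}\|w(\cdot,t)\|_{L_4} < \infty\}$. Using the kernel bound \eqref{kenrelKest} together with the Hardy–Littlewood–Sobolev / Oseen-kernel estimates, the bilinear map $B(w_1,w_2) := G(w_1\otimes w_2)$ satisfies $\|B(w_1,w_2)\|_{X_T} \le C_0 \|w_1\|_{X_T}\|w_2\|_{X_T}$, a standard estimate (this is exactly the mechanism behind the small-data results of \cite{Plan1996}, \cite{cannone1997}). The forcing term $G(V\otimes V)$ has $X_T$-norm $\le C_0 (M^{(0)})^2 \le C_0\varepsilon_3 M^{(0)}$ by \eqref{smallnessasmpinfinityB}. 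Hence, if $\varepsilon_3$ is chosen small enough (depending only on the universal constant $C_0$), the standard contraction-mapping lemma for quadratic equations $y = L + B(y,y)$ in a Banach space yields a unique solution $\widetilde u \in X_T$ with $\|\widetilde u\|_{X_T} \le 2 C_0 (M^{(0)})^2$, which combined with $V \in X_T$ gives \eqref{vintegestinfinityB} after adjusting constants.

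**Next I would establish** that $\widetilde v = V + \widetilde u$ is genuinely a weak $\B$-solution in the sense of Definition~\ref{globalBesovinf}. Since $V = S(t)u_0$ with $u_0 \in \B$, Proposition~\ref{semigroupweakL3} gives all the needed decay of $V$; and $\widetilde u \in X_T$ with the Duhamel representation yields, by interpolation with the energy-type estimates, that $\widetilde u \in L_\infty(0,T;J)\cap L_2(0,T;\stackrel{\circ}J{^1_2})$ and that \eqref{perturbedspaces} holds for a suitable $\alpha$ (one gets $t^{1/4}$-type control of $\|\widetilde u(\cdot,t)\|_{L_2}^2$ from the $L_4$ bound and Hölder). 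The pressure $q = q_1 + q_2$ is recovered via Riesz transforms as in \eqref{pressureRiesz}–\eqref{pressureRieszq2}, landing in $L_{3/2,\mathrm{loc}}(Q_T)$ by Lemma~\ref{integabilitynonlinearity}. The weak continuity \eqref{vweakcontinuity}, the energy inequality \eqref{venergyineq}, and the local energy inequality \eqref{localenergyinequality} follow because the mild solution is smooth for positive times (next step) and satisfies the equations classically there, so the identities are genuine equalities on $Q_{\lambda,T}$ and pass to inequalities on $Q_T$ by the standard Serrin-type argument; alternatively, since $\widetilde v$ is smooth for $t>0$, one verifies these directly and the initial condition $u(\cdot,t)\to 0$ in $L_2$ comes from \eqref{perturbedspaces}.

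**For the smoothing estimate \eqref{vpsmoothinfinityB}** I would run a parabolic bootstrap. Fix $0<\lambda<T$. First, the mild solution $\widetilde u$ lies in $X_{T}$, so on $[\lambda/2,T)$ it is bounded in $L_4$. One then iterates: plugging $\widetilde v = V+\widetilde u$ into the Duhamel formula and using the kernel estimate \eqref{kenrelKest}, each application of $G$ improves integrability/differentiability, upgrading from $L_4$ control on $[\lambda/2^j,T)$ to $L_\infty$ and then to control of $\nabla^k\partial_t^l \widetilde v$ on $[\lambda, T)$, with constants depending on $\lambda$, $k$, $l$, and $\|u_0\|_{\B}$ (through $M^{(0)} \le C\|u_0\|_{\B}$ by Proposition~\ref{semigroupweakL3}); the derivatives of $V = S(t)u_0$ are controlled directly by \eqref{semigroupsolonnikovest}. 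The corresponding bounds on $\pi_{\widetilde v\otimes\widetilde v}$ follow since $\mathcal R_i\mathcal R_j$ applied to a smooth, spatially-decaying tensor, plus its derivatives, inherits these bounds — one splits $\widetilde v\otimes\widetilde v = V\otimes V + (\text{cross}) + \widetilde u\otimes\widetilde u$ and handles the low-regularity/low-decay contributions by the same Calder\'on–Zygmund plus interpolation argument used for the pressure in Lemma~\ref{higherderivativeandpressure}.

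**The main obstacle** I anticipate is not the fixed-point construction — that is classical once $\varepsilon_3$ is chosen in terms of the universal bilinear constant — but rather (i) pinning down that the constructed mild solution actually satisfies \emph{all} the structural requirements of Definition~\ref{globalBesovinf} simultaneously (in particular reconciling the critical $X_T$-estimate with the subcritical energy-class membership \eqref{perturbedspaces}, which requires extracting $L_2$-decay from the $L_4$-bound via Hölder on annular regions plus the $L_2$-energy identity for the smooth-in-time solution), and (ii) making the bootstrap for \eqref{vpsmoothinfinityB} uniform down to $x\to\infty$, i.e.\ getting genuine pointwise-in-space bounds rather than just local ones, which is where the precise pointwise kernel estimate \eqref{kenrelKest} (rather than merely $L^p$ bounds) is essential.
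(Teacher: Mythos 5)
Your proposal is correct and follows essentially the same route as the paper: a Kato-type contraction/successive-approximation argument in the critical space $X_4(T)$ with the universal bilinear constant fixing $\varepsilon_3$, followed by recovering the energy-class membership from $\widetilde v\otimes\widetilde v\in L_2(Q_T)$ and invoking a parabolic bootstrap (the paper cites Theorem~3.1 of \cite{barker2017}) for \eqref{vpsmoothinfinityB}. The only quibble is cosmetic: the paper's computation gives $\|\widetilde u(\cdot,t)\|_{L_2}^2\leq 4t^{1/2}\|V\|_{X_4(T)}^4$ (exponent $1/2$, not $1/4$), though any positive exponent suffices for \eqref{perturbedspaces}.
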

\begin{proof}
Let us introduce the space
$$X_{4}(T):=\{f\in \mathcal{S}^{'}(\mathbb{R}^3\times ]0,T[): \textrm{ess}\sup_{0<t<T} t^{\frac{1}{8}}\|f(\cdot,t)\|_{L_{4}(\mathbb{R}^3)}<\infty\}.$$
\begin{equation}\label{katoclass5def}
\| f \|_{X_{4}(T)}:= \textrm{ess}\sup_{0<t<T} t^{\frac 1 8} \|f(\cdot,t)\|_{L_{4}(\mathbb{R}^3)}.
\end{equation}

We use the method of successive iterations. Let us  define the following,
 for $k=1,2,...,$
$$v^{(1)}=V,\qquad v^{(k+1)}=V+u^{(k+1)},
$$
where $u^{(k+1)}:= G(v^{(k)}\otimes v^{(k)})$ solves the following problem
$$\partial_tu^{(k+1)}-\Delta u^{(k+1)}+\nabla q^{(k+1)}=-{\textrm div}\,v^{(k)}\otimes v^{(k)},\,\,\,\textrm{div}\, u^{k+1}=0$$ in 
$Q_T$, 
$$u^{(k+1)}(\cdot,0)=0$$ 
in $\mathbb R^3$. Using (\ref{kenrelKest}), it is easy to check that 
for solutions to the above linear problem the following estimate is
true 
$$\| u^{(k+1)} \|_{X_{4}(T)}\leq c\| v^{(k)} \|_{X_{4}(T)}^2,$$ 

and thus we have 
$$
\| v^{(k+1)} \|_{X_{4}(T)}\leq \|V\|_{X_{4}(T)}+c\| v^{(k)} \|_{X_{4}(T)}^2
$$

for all $k=1,2,...$.   Using arguments in \cite{Kato1984}, one can show that if  $$\|V\|_{X_{4}(T)}<\varepsilon_{3}<\frac{1}{4c},$$ then 
we have 
\begin{equation}\label{1stest}
\|v^{(k)}\|_{X_{4}(T)}<2\|V\|_{X_{4}(T)}  \end{equation}
for all $k=1,2,...$.
 
Furthermore, Kato's arguments also give that there is a $\widetilde{v}=V+\widetilde{u}$ such that
\begin{equation}\label{converg1}
\| v^{(k)}-\widetilde{v}\|_{X_4(T)},\, \|u^{(k)}-\widetilde{u}\|_{X_4(T)}\rightarrow 0,
\end{equation}

We also can exploit our equation, together with the pressure equation, to derive  the following estimate for the energy and pressure:
\begin{equation}\label{energypresestimate}
\|u^{(k)}-u^{(m)}\|^2_{2,\infty,Q_T}+\|\nabla u^{(k)}-u^{(m)}\|^2_{2,Q_T}+\|\pi_{v^{(k)}\otimes v^{(k)}}-\pi_{v^{(m)}\otimes v^{(m)}}\|^2_{2,Q_T}\leq$$$$\leq c\int\limits^T_0\int\limits_{\mathbb R^3}|v^{(k)}\otimes v^{(k)}-v^{(m)}\otimes v^{(m)}|^2dxdt.
\end{equation}

Using (\ref{converg1}), we immediately see the following
 \begin{equation}\label{u^nstrongconverg}
 u^{(k)}\rightarrow \widetilde{u}\,\,{\rm in}\,\,W^{1,0}_{2}(Q_T)\cap C([0,T];L_2(\mathbb{R}^3)) 
 ,
 \end{equation}
 \begin{equation}\label{presconvergence}
 \pi_{v^{(k)}\otimes v^{(k)}}\rightarrow \pi_{\widetilde{v}\otimes \widetilde{v}}\,\,{\rm in}\,\, L_{2}(Q_T),
 \end{equation}
 \begin{equation}\label{widetildeueqn}
 \partial_t\widetilde{u}-\Delta \widetilde{u}+\nabla \pi_{\widetilde{v}\otimes \widetilde{v}}=-{\textrm div}\,\widetilde{v}\otimes \widetilde{v},\,\,\,\textrm{div}\, \widetilde{u}=0
 \end{equation}
 in $Q_T$ and
 \begin{equation}\label{limnearinitialtime}
 \widetilde{u}(\cdot,0)=0
 .
 \end{equation}
 Furthermore, it is not difficult to show that for $0<t<T$
 \begin{equation}\label{uenergyinequality}
 \|\widetilde{u}(\cdot,t)\|_{L_2}^2 +\int\limits_0^t\int\limits_{\mathbb{R}^3} |\nabla \widetilde{u}(x,t')|^2 dxdt' \leq \int\limits_0^t\int\limits_{\mathbb{R}^3} |\widetilde{v}\otimes \widetilde{v}|^2 dxdt' \leq 4t^{\frac{1}{2}}\|V\|_{X_{4}(T)}^4. 
 \end{equation}
 Thus
 \begin{equation}\label{umildHolder}
 \sup_{0<t<T} \frac{\|\widetilde{u}(\cdot,t)\|_{L_{2}}^2}{t^{\frac 1 2}}<\infty.
 \end{equation}
 Clearly, the pair $\widetilde{v}$ and $ \pi_{\widetilde{v}\otimes \widetilde{v}}$ satisfies the Navier-Stokes equations, in a distributional sense. 
 Showing that $(\widetilde{v},\pi_{\widetilde{v}\otimes \widetilde{v}})$ satisfies the local energy inequality and (\ref{vpsmoothinfinityB}) follows from arguments in \cite{barker2017}\footnote{Specifically Theorem 3.1 of \cite{barker2017}.}. 
 The fact that $\widetilde{v}\otimes \widetilde{v} \in L_{2}(Q_T)$, together with properties of $\widetilde{u}$ from (\ref{converg1}) and (\ref{u^nstrongconverg})-(\ref{limnearinitialtime}), imply that for $t\in [0,T]$:
 \begin{equation}\label{uenergyequalityglobaldef}
 \|\widetilde{u}(\cdot,t)\|_{L_2}^2 +2\int\limits_0^t\int\limits_{\mathbb{R}^3} |\nabla \widetilde{u}(x,t')|^2 dxdt' = 2\int\limits_0^t\int\limits_{\mathbb{R}^3} \widetilde{v}\otimes \widetilde{v}: \nabla\widetilde{u}  dxdt'=$$$$=2\int\limits_0^t\int\limits_{\mathbb{R}^3} {V}\otimes \widetilde{u}: \nabla\widetilde{u}+{V}\otimes {V}: \nabla\widetilde{u}  dxdt'. 
 \end{equation}
 Hence, $\widetilde{u}$ satisfies the energy inequality (on $Q_{T}$) present in our definition of global weak $\B$ solution  (in fact, in this case it is an equality). 
 \end{proof}

 Before proceeding further, we state a known Lemma found in \cite{prodi} and \cite{Serrin}, for example.
\begin{lemma}\label{trilinear}
Let $p\in ]3,\infty]$ and 
\begin{equation}\label{serrinpairs}
\frac{3}{p}+\frac{2}{r}=1.
\end{equation}
Suppose that $w \in L_{p,r}(Q_T)$, $v\in L_{2,\infty}(Q_T)$ and $\nabla v\in L_{2}(Q_T)$. 
Then for $t\in ]0,T[$:
\begin{equation}\label{continuitytrilinear}
\int\limits_0^t\int\limits_{\mathbb{R}^3} |\nabla v||v||w| dxdt'\leq C\int\limits_0^t \|w\|_{L_{p}}^r \|v\|^{2}_{L_2}dt'+\frac{1}{2}\int\limits_0^t\int\limits_{\mathbb{R}^3} |\nabla v|^2 dxdt'.
\end{equation}
\end{lemma}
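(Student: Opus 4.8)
\textbf{Proof proposal for Lemma \ref{trilinear}.}
The plan is the classical Serrin–Prodi argument: a pointwise-in-time Hölder splitting of the triple product into $\nabla v\in L_2$, $v$ in an intermediate Lebesgue space, and $w\in L_p$, followed by Gagliardo–Nirenberg interpolation for the middle factor and Young's inequality in time to absorb the gradient term. Concretely, first I would fix $q$ by $\frac12+\frac1p+\frac1q=1$, i.e.\ $q=\frac{2p}{p-2}$, so that Hölder in space gives
\begin{equation}\label{trilinearproofholder}
\int_{\mathbb R^3}|\nabla v||v||w|\,dx\leq \|\nabla v\|_{L_2}\,\|v\|_{L_q}\,\|w\|_{L_p}.
\end{equation}
Since $p>3$ we have $q\in[2,6[$, so $L_2\cap\dot J{^1_2}\hookrightarrow L_q$ and the Gagliardo–Nirenberg inequality applies: $\|v\|_{L_q}\leq C\|v\|_{L_2}^{1-\frac3p}\|\nabla v\|_{L_2}^{\frac3p}$, the exponent $\frac3p$ being forced by the scaling relation $\frac1q=\frac12-\frac1p$. (For $p=\infty$ one has $q=2$ and the interpolation step is vacuous.)

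Substituting this into \eqref{trilinearproofholder} yields, for a.e.\ $t'\in]0,t[$,
\begin{equation}\label{trilinearproofpt}
\int_{\mathbb R^3}|\nabla v||v||w|\,dx\leq C\,\|\nabla v\|_{L_2}^{1+\frac3p}\,\|v\|_{L_2}^{1-\frac3p}\,\|w\|_{L_p}.
\end{equation}
Next I would apply Young's inequality to the right-hand side with the conjugate pair $\big(\tfrac{2}{1+3/p},\tfrac{2}{1-3/p}\big)$, which separates off $\tfrac12\|\nabla v\|_{L_2}^2$; the key bookkeeping is that $1+\frac3p<2$ (so this pair is admissible) and that the remaining exponent equals $\frac{2}{1-3/p}=r$ by \eqref{serrinpairs}, while $\big(1-\frac3p\big)r=2$. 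Hence
\begin{equation}\label{trilinearproofyoung}
\int_{\mathbb R^3}|\nabla v||v||w|\,dx\leq \tfrac12\|\nabla v\|_{L_2}^2 + C\,\|w\|_{L_p}^r\,\|v\|_{L_2}^2 .
\end{equation}
Integrating \eqref{trilinearproofyoung} in $t'$ over $]0,t[$ gives exactly \eqref{continuitytrilinear}; the finiteness of all integrals and the use of Fubini are justified by the hypotheses $w\in L_{p,r}(Q_T)$, $v\in L_{2,\infty}(Q_T)$, $\nabla v\in L_2(Q_T)$.

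There is no serious obstacle here; the only points requiring care are (i) checking that the Gagliardo–Nirenberg interpolation exponent and the Young exponents are consistent with the Serrin relation $\frac3p+\frac2r=1$, which is where the specific form of the right-hand side comes from, and (ii) handling the endpoint $p=\infty$, $r=2$ separately by using the plain bound $\int|\nabla v||v||w|\,dx\leq\|w\|_{L_\infty}\|\nabla v\|_{L_2}\|v\|_{L_2}$ together with Young's inequality with exponents $(2,2)$.
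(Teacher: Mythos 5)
Your proof is correct and is exactly the standard Prodi--Serrin argument (H\"older with exponents $2,\,\tfrac{2p}{p-2},\,p$, Gagliardo--Nirenberg with exponent $\tfrac3p$, then Young's inequality with the conjugate pair $\bigl(\tfrac{2}{1+3/p},\tfrac{2}{1-3/p}\bigr)$); the paper itself gives no proof and simply cites Prodi and Serrin, where this is precisely the argument used. The exponent bookkeeping and the separate treatment of the endpoint $p=\infty$, $r=2$ are both handled correctly.
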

\subsection{Proof of Proposition \ref{besovsmallness}}
 \begin{proof}
 The proof of Proposition \ref{besovsmallness} is based on ideas developed by the author in \cite{barker2017}.

Suppose, $$\sup_{0<t<T}t^{\frac 1 8}\|S(t)u_{0}\|_{L_{4}}:=\|V\|_{X_{T}}<\varepsilon.$$
Furthermore, suppose $\varepsilon<\varepsilon_{3}$, where $\varepsilon_{3}$ is as in the statement of Proposition \ref{strongsolution}. Then by Proposition \ref{strongsolution}
there exists a weak $\B$ solution ($\widetilde{v}:=\widetilde{u}+V$) on $Q_{T}$  that satisfies
\begin{equation}\label{vintegestinfinityBrecall}
\|\widetilde{v}\|_{X_{4}(T)}<2\|V\|_{X_{4}(T)}.
\end{equation}
Here we recall that
\begin{equation}\label{katoclass5defrecall}
\| f \|_{X_{4}(T)}:= \textrm{ess}\sup_{0<t<T} t^{\frac 1 8} \|f(\cdot,t)\|_{L_{4}(\mathbb{R}^3)}.
\end{equation}
Let us now consider any global $\B$ solution $u:=v+V$, defined on $Q_{\infty}$, with the same initial data $u_0 \in \B(\mathbb{R}^3).$ 
We define 
\begin{equation}\label{wdefB}
w=v-\widetilde{v}=u-\widetilde{u} \in W^{1,0}_{2}(Q_{T})\cap C_{w}([0,T]; J(\mathbb{R}^3)).
\end{equation}
Moreover, $w$ satisfies the following equations
\begin{equation}\label{directsystemwB}
\partial_t w+w\cdot\nabla w+\widetilde{v}\cdot\nabla w+w\cdot\nabla \widetilde{v}-\Delta w=-\nabla q,\qquad\mbox{div}\,w=0
\end{equation}
in $Q_{T}$,
with the initial condition satisfied in the  strong $L_{2}$ sense:
\begin{equation}\label{initialconditionwbmo-1}
\lim_{t\rightarrow 0^{+}}\|w(\cdot,0)\|_{L_{2}}=0
\end{equation}
in $\mathbb{R}^3$.


Using minor adjustments to arguments used in Proposition 14.3 of \cite{LR1}, one can deduce that for $t\in [\delta,T]$: 
\begin{equation}\label{crosstermB}
\int\limits_{\mathbb{R}^3} u(y,t)\cdot \widetilde{u}(y,t)dy=\int\limits_{\mathbb{R}^3} u(y,\delta)\cdot \widetilde{u}(y,\delta)dy-2\int\limits_{\delta}^t\int\limits_{\mathbb{R}^3} \nabla u:\nabla \widetilde{u} dyd\tau+$$$$+\int\limits_{\delta}^t\int\limits_{\mathbb{R}^3} V\otimes V: \nabla(u+\widetilde{u})dyd\tau-\int\limits_{\delta}^t\int\limits_{\mathbb{R}^3}  \widetilde{v}\otimes w:\nabla w dyd\tau +$$$$+\int\limits_{\delta}^t\int\limits_{\mathbb{R}^3} V\otimes\widetilde{u}: \nabla\widetilde{u}+  {V}\otimes u: \nabla{u}dyd\tau
\end{equation}
Using Lemma \ref{trilinear} and (\ref{vintegestinfinityBrecall}), we see that 
\begin{equation}\label{trilineatestwB}
\int\limits_{\delta}^t\int\limits_{\mathbb{R}^3} |\widetilde{v}||w||\nabla w| dyd\tau\leq C\int\limits_{\delta}^t \|\widetilde{v}(\cdot,\tau)\|_{L_{4}}^{8}\|w(\cdot,\tau)\|_{L_{2}}^2 d\tau+$$$$+\frac{1}{2}\int\limits_{\delta}^t\int\limits_{\mathbb{R}^3}|\nabla w|^2 dyd\tau \leq $$$$\leq C\|V\|_{X_{4}(T)}^8\int\limits_{\delta}^t \frac{\|w(\cdot,\tau)\|_{L_{2}}^{2}}{\tau} d\tau+ \frac{1}{2}\int\limits_{\delta}^t\int\limits_{\mathbb{R}^3}|\nabla w|^2 dyd\tau.
\end{equation}
Similarly,
\begin{equation}\label{trilineatestwBu}
\int\limits_{\delta}^t\int\limits_{\mathbb{R}^3} |V||{u}||\nabla {u}| dyd\tau\leq $$$$\leq C\|V\|_{X_{4}(T)}^8\int\limits_{\delta}^t \frac{\|{u}(\cdot,\tau)\|_{L_{2}}^{2}}{\tau} d\tau+ \frac{1}{2}\int\limits_{\delta}^t\int\limits_{\mathbb{R}^3}|\nabla{u}|^2 dyd\tau
\end{equation}
and
\begin{equation}\label{trilineatestwBtildeu}
\int\limits_{\delta}^t\int\limits_{\mathbb{R}^3} |V||\widetilde{u}||\nabla \widetilde{u}| dyd\tau\leq $$$$\leq C\|V\|_{X_{4}(T)}^8\int\limits_{\delta}^t \frac{\|\widetilde{u}(\cdot,\tau)\|_{L_{2}}^{2}}{\tau} d\tau+ \frac{1}{2}\int\limits_{\delta}^t\int\limits_{\mathbb{R}^3}|\nabla \widetilde{u}|^2 dyd\tau.
\end{equation}
  Lemma \ref{venergyest}, applied to both $u$ and $\tilde{u}$, implies that there exists $$\beta(\gamma_1,\gamma_2,\delta_2)>0$$ a such that for $0<t<T$ :
\begin{equation}\label{estnearinitialtimeglobalBu}
\|u(\cdot,t)\|_{L_{2}}^2\leq t^{\beta} c(T, \|u_0\|_{\B}, \delta_2),
\end{equation}
\begin{equation}\label{estnearinitialtimeglobalBtildeu}
\|\widetilde{u}(\cdot,t)\|_{L_{2}}^2\leq t^{\beta} c(T, \|u_0\|_{\B}, \delta_2)
\end{equation}
and
\begin{equation}\label{estnearinitialtimeglobalBw}
\|w(\cdot,t)\|_{L_{2}}^2\leq t^{\beta} c(T, \|u_0\|_{\B}, \delta_2).
\end{equation}

Hence,
\begin{equation}\label{wweightedintimelesbesgue}
\sup_{0<t<T} \frac{\|w(\cdot,t)\|_{L_2}^2}{t^{\beta}}<\infty.
\end{equation}
Then (\ref{estnearinitialtimeglobalBu})-(\ref{estnearinitialtimeglobalBw}) allows us to take $\delta\rightarrow 0$ in (\ref{crosstermB}) infer that for any $t\in [0,T]$ we have
\begin{equation}\label{crosstermuptoinitialtimeB}
\int\limits_{\mathbb{R}^3} u(y,t)\cdot \widetilde{u}(y,t)dy=-2\int\limits_{0}^t\int\limits_{\mathbb{R}^3} \nabla u:\nabla \widetilde{u} dyd\tau+$$$$+\int\limits_{0}^t\int\limits_{\mathbb{R}^3} V\otimes V: \nabla(u+\widetilde{u})dyd\tau-\int\limits_{0}^t\int\limits_{\mathbb{R}^3} \widetilde{v}\otimes w:\nabla w dyd\tau +$$$$+\int\limits_{0}^t\int\limits_{\mathbb{R}^3} V\otimes\widetilde{u}: \nabla\widetilde{u}+ V\otimes u: \nabla{u}dyd\tau
\end{equation}
Recall that $u$ and $\widetilde{u}$ satisfy the following energy inequalities for $0<t<T$:
\begin{equation}\label{energyinequalityurecall}
\|u(\cdot,t)\|_{L_{2}}^2+2\int\limits_{0}^t\int\limits_{\mathbb R^3} |\nabla u(x,t')|^2 dxdt'\leq$$$$\leq 2 \int_{0}^t\int_{\mathbb R^3}(V\otimes u+V\otimes V):\nabla udxdt'
\end{equation}
\begin{equation}\label{energyinequalitytildeurecall}
\|\widetilde{u}(\cdot,t)\|_{L_{2}}^2+2\int\limits_{0}^t\int\limits_{\mathbb R^3} |\nabla \widetilde{u}(x,t')|^2 dxdt'\leq$$$$\leq 2 \int_{0}^t\int_{\mathbb R^3}(V\otimes \widetilde{u}+V\otimes V):\nabla \widetilde{u}dxdt'
\end{equation}
From this and (\ref{crosstermuptoinitialtimeB}), we deduce that for any $ t\in [0,T]$
\begin{equation}\label{wenergyequalityuptoinitialtimebmo-1lesbesgue}
\|w(\cdot,t)\|_{L_2}^2+2\int\limits_{0}^{t}\int\limits_{\mathbb{R}^3}|\nabla w|^2 dyd\tau \leq 2\int\limits_{0}^t\int\limits_{\mathbb{R}^3}\widetilde{v}\otimes w:\nabla w dyd\tau.
\end{equation}
Using (\ref{trilineatestwB}) and (\ref{wweightedintimelesbesgue}) we see that for $t\in [0,T]$: 
\begin{equation}\label{wmainest1B}
\|w(\cdot,t)\|_{L_{2}}^2\leq C\|V\|_{X_4(T)}^8 \int\limits_{0}^t \frac{\|w(\cdot,\tau)\|_{L_{2}}^{2}}{\tau} d\tau\leq $$$$\leq{\frac{C}{\beta}t^{\beta}\|V\|_{X_4(T)}^8}\sup_{0<\tau<T}\Big(\frac{\|w(\cdot,\tau)\|_{L_{2}}^2}{\tau^{\beta}}\Big).
\end{equation}
Thus,
\begin{equation}\label{wmainest2B}
\sup_{0<\tau<T}\Big(\frac{\|w(\cdot,\tau)\|_{L_{2}}^2}{\tau^{\beta}}\Big)\leq{\frac{C'}{\beta}\|V\|_{X_4(T)}^8}\sup_{0<\tau<T}\Big(\frac{\|w(\cdot,\tau)\|_{L_{2}}^2}{\tau^{\beta}}\Big).
\end{equation}

If we assume further that that $$\varepsilon\leq \min\Big(\Big(\frac{\beta}{2C'}\Big)^{\frac 1 8}, \varepsilon_3\Big),$$
then
$$\|V\|_{X_{4}(T)}\leq\Big(\frac{\beta}{2C'}\Big)^{\frac 1 8}.$$
 It then immediately follows that $w=0$ on $Q_{T}$. 

\end{proof}

 \setcounter{equation}{0}
 \section{Existence of solutions with $\dot{B}^{s_p}_{p,\infty}(\mathbb{R}^3)$ initial values}
 
  In what follows we will need the following  two Propositions. The proof of Proposition \ref{regularityinfinity}  can be found in \cite{barker2017} and the proof of Proposition \ref{Dallas} can be found in \cite{dallas}, for example.
\begin{pro}\label{regularityinfinity} 
 Consider $p_2$ and $\delta_2$ such that $4<p_2<\infty$, $\delta_2>0$ and $s_{p_2}+\delta_2<0$. Suppose that $u_0\in \dot{B}^{s_{p_2}+\delta_2}_{p_2,p_2}(\mathbb{R}^3)$ is a divergence free tempered distribution. 
There exists a constant $c=c(p_2)$ such that 
if $0<T<\infty$ is such that
\begin{equation}\label{smallnessasmpinfinity}
4cT^{\frac{\delta_2}{2}}\|u_{0}\|_{\dot{B}^{s_{p_2}+\delta_2}_{p_2,p_2}}<1,
\end{equation}
then 
there exists a $w$, which solves the Navier-Stokes equations (\ref{directsystem})-(\ref{directic}) on $Q_T$ in the sense of distributions and satisfies the following properties.
The first property is that $w$ satisfies the estimate
\begin{equation}\label{vintegestinfinity}
\sup_{0<t<T} t^{\frac{s_{p_2}}{2}-\frac{\delta_2}{2}}\|w(\cdot,t)\|_{L_{p_2}}<2\sup_{0<t<T} t^{\frac{s_{p_2}}{2}-\frac{\delta_2}{2}}\|S(t)u_0\|_{L_{p_2}}:=2M^{(0)} 
\end{equation}
The other property is that for $\lambda\in]0,T[$ and $k=0,1\ldots$, $l=0,1\ldots$:
\begin{equation}\label{vpsmoothinfinity}
\sup_{(x,t)\in Q_{\lambda,T}}|\partial_{t}^{l}\nabla^{k} w|+|\partial_{t}^{l}\nabla^{k} \pi_{w\otimes w}|\leq c(p_2,\delta_2,\lambda,\|u_0\|_{\dot{B}_{p_2,p_2}^{s_{p_2}+\delta_2}},k,l ).
\end{equation}
\end{pro}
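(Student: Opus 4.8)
The plan is to run the classical Kato small–data iteration in a scaling–adapted, time–weighted Lebesgue space and then to bootstrap to interior smoothness; this is the argument of \cite{barker2017}, which I sketch. First I would put the system in Duhamel--Oseen form
\[
w(\cdot,t)=S(t)u_0-G(w\otimes w)(\cdot,t),\qquad \pi=\pi_{w\otimes w},
\]
with $G$ the operator of (\ref{fluctuationdef}), and set $\alpha:=-\tfrac12(s_{p_2}+\delta_2)=\tfrac12\big(1-\tfrac3{p_2}-\delta_2\big)$. Since $0<\delta_2<-s_{p_2}$ one has $0<\alpha<\tfrac12\big(1-\tfrac3{p_2}\big)<\tfrac12$, so in particular $2\alpha<1-\tfrac3{p_2}<1$. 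The natural workspace is the Banach space
\[
E_T:=\Big\{f:\ \|f\|_{E_T}:=\sup_{0<t<T}t^{\alpha}\|f(\cdot,t)\|_{L_{p_2}(\mathbb R^3)}<\infty\Big\},
\]
whose weight is the scaling weight of $\dot B^{s_{p_2}+\delta_2}_{p_2,p_2}$ relative to $L_{p_2}$. By the heat–flow characterisation of homogeneous Besov spaces (Remark \ref{besovremark3}) together with $\dot B^{s_{p_2}+\delta_2}_{p_2,p_2}\hookrightarrow\dot B^{s_{p_2}+\delta_2}_{p_2,\infty}$ (Remark \ref{besovremark2}), $S(\cdot)u_0\in E_T$ with $\|S(\cdot)u_0\|_{E_T}\le M^{(0)}\le c(p_2)\|u_0\|_{\dot B^{s_{p_2}+\delta_2}_{p_2,p_2}}$.

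\textbf{Bilinear estimate and fixed point.} From $|K(x,t)|\le C/(|x|^2+t)^2$ one gets $\|K(\cdot,\tau)\|_{L_r}\le C(r)\,\tau^{-2+\frac3{2r}}$ for $r\in[1,\infty]$; choosing $r=p_2'=p_2/(p_2-1)$ and using Young's and H\"older's inequalities,
\[
\|G(f\otimes g)(\cdot,t)\|_{L_{p_2}}\le C(p_2)\int_0^t(t-s)^{-\frac12-\frac3{2p_2}}\,\|f(\cdot,s)\|_{L_{p_2}}\|g(\cdot,s)\|_{L_{p_2}}\,ds .
\]
Since $p_2>3$ (so $\tfrac12+\tfrac3{2p_2}<1$) and $2\alpha<1$, the resulting Beta integral converges --- its value staying bounded uniformly in $\delta_2\in(0,-s_{p_2})$, both arguments being positive and bounded away from $0$ --- and comparing powers of $t$ yields $\|G(f\otimes g)\|_{E_T}\le c(p_2)\,T^{\delta_2/2}\,\|f\|_{E_T}\|g\|_{E_T}$, the gain of the positive power $T^{\delta_2/2}$ being exactly the sub-criticality provided by $\delta_2>0$. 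With $c=c(p_2)$ taken to dominate the two constants above, the standard fixed–point lemma for quadratic equations in Banach spaces shows that, whenever $4cT^{\delta_2/2}\|u_0\|_{\dot B^{s_{p_2}+\delta_2}_{p_2,p_2}}<1$, the map $f\mapsto S(\cdot)u_0-G(f\otimes f)$ has a unique fixed point $w$ with $\|w\|_{E_T}\le2M^{(0)}$, which is (\ref{vintegestinfinity}). One then checks in the usual way that $(w,\pi_{w\otimes w})$ solves (\ref{directsystem})--(\ref{directbc}) in $\mathcal D'(Q_T)$, and that $t^{\alpha}\|w(\cdot,t)-S(t)u_0\|_{L_{p_2}}=O(t^{\delta_2/2})$, so $w$ attains $u_0$ at $t=0$ in the same (distributional) sense that $S(t)u_0$ does, giving (\ref{directic}).

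\textbf{Interior smoothness and the main obstacle.} For $t\in[\lambda,T]$ the bound $\|w(\cdot,t)\|_{L_{p_2}}\le 2M^{(0)}\lambda^{-\alpha}$ makes $w\otimes w$ uniformly bounded in $L_{p_2/2}(\mathbb R^3)$ on $[\lambda,T]$; substituting this into $w(\cdot,t)=S(t-\tfrac\lambda2)w(\cdot,\tfrac\lambda2)-\int_{\lambda/2}^tK(t-s)*(w\otimes w)(s)\,ds$ and using the kernel bounds $\|\partial_t^l\nabla^kK(\cdot,\tau)\|_{L_r}\le C\,\tau^{-2-l-\frac k2+\frac3{2r}}$, one bootstraps --- exactly as in \cite{barker2017} --- first to $w\in L_\infty$ and then to uniform bounds on all space–time derivatives of $w$ and of $\pi_{w\otimes w}$ on $Q_{\lambda,T}$, i.e. (\ref{vpsmoothinfinity}); alternatively one may invoke $\varepsilon$–regularity once $w\otimes w$ is locally in a sub–critical class. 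The only non–routine point is the bilinear estimate: one must check that the Beta integral converges at both endpoints and produces a genuinely positive power of $T$, which is precisely where $p_2>3$ and $0<\delta_2<-s_{p_2}$ enter. Everything else is classical Kato theory and parabolic bootstrapping, carried out in detail in \cite{barker2017}, to which the Proposition is attributed.
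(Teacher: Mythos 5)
Your proposal is correct and follows essentially the same route as the paper, which does not prove this Proposition in situ but defers to Theorem 3.1 of \cite{barker2017}; the argument there (and in the paper's own analogous critical-case Proposition \ref{strongsolution}) is precisely the Kato successive-approximation scheme in a time-weighted $L_{p_2}$ space using the Oseen kernel bound (\ref{kenrelKest}), with the subcritical gain $T^{\delta_2/2}$ playing the role you identify. The only step stated a little too quickly is the attainment of the initial data: since $\delta_2/2-\alpha$ may be negative, one should pass the derivative in $G=\int_0^t e^{(t-s)\Delta}\mathbb{P}\,\mathrm{div}(\cdot)\,ds$ onto the test function to get $|\langle G(w\otimes w)(t),\varphi\rangle|\lesssim t^{1-2\alpha}\to 0$, but this is routine and does not affect the proof.
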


\begin{pro}\label{Dallas}
Let $u_0= U_0+V_0,$ with $U_0\in \dot{B}^{s_{p_2}+\delta_2}_{p_2,p_2}(\mathbb{R}^3)$ ($0<\delta_2<-s_{p_2}$) and $V_0\in L_{2}(\mathbb{R}^3)$ being  divergence free tempered distributions. Moreover, assume $0<T<\infty$ is such that
\begin{equation}\label{U0smallness}
4cT^{\frac{\delta_2}{2}}\|U_0\|_{\dot{B}^{s_{p_2}+\delta_2}_{p_2,p_2}}<1.
\end{equation}

  Then there exists a solution to Navier-Stokes IVP in $Q_T$ 
 with the following properties.
 In particular,
\begin{equation}\label{weaksolutionsplittingBpdallas}
v=W+u.
\end{equation}
Here, $W$ is a mild solution to the Navier-Stokes equations from Proposition \ref{regularityinfinity}, with initial data $U_{0}\in\dot{B}^{s_{p_2}+\delta_2}_{p_2,p_2}(\mathbb{R}^3)$, such that
\begin{equation}\label{mildBesovestimatedallas}
\sup_{0<t<T} t^{\frac{p_2}{2}-\frac{\delta_{2}}{2}}\|W(\cdot,t)\|_{L_{p_2}}\leq 2\sup_{0<t<T} t^{\frac{p_2}{2}-\frac{\delta_{2}}{2}}\|S(t)u_0\|_{L_{p_2}}.
\end{equation}
Furthermore, $u\in L_{\infty}(0,T; J)\cap L_{2}(0,T;\stackrel{\circ} J{^1_2})$.  
Additionally,  there exists a $q\in L_{\frac 32, {\rm loc}}(Q_T)$ such that $u$ and $q$  satisfy the perturbed Navier Stokes system in the sense of distributions:
\begin{equation}\label{perturbdirectsystemBpdallas}
\partial_t u+v\cdot\nabla v-W\cdot\nabla W-\Delta u=-\nabla q,\qquad\mbox{div}\,u=0
\end{equation}
in $Q_T$.
Furthermore, for any $w\in L_{2}$:
\begin{equation}\label{vweakcontinuityBpdallas}
t\rightarrow \int\limits_{\mathbb R^3} w(x)\cdot u(x,t)dx
\end{equation}
is a continuous function on  $[0,T]$ and
\begin{equation}\label{initialconditiondallas}
\lim_{t\rightarrow 0^+}\|u(\cdot,t)-V_0\|_{L_{2}}=0.
\end{equation}
Moreover, $u$ satisfies the energy inequality:
\begin{equation}\label{venergyineqBpdallas}
\|u(\cdot,t)\|_{L_{2}}^2+2\int\limits_{0}^t\int\limits_{\mathbb R^3} |\nabla u(x,t')|^2 dxdt'\leq$$$$\leq 2 \int_{0}^t\int_{\mathbb R^3}(W\otimes u):\nabla udxdt'+\|V_0\|_{L_2}^2
\end{equation}
for all $t\in [0,T]$.

Finally, $v$ and $q$ satisfy the local energy inequality.
Namely, for almost every $t\in ]0,T[$ the following inequality holds for all non negative functions $\varphi\in C_{0}^{\infty}(Q_T)$:
\begin{equation}\label{localenergyinequalityBpdallas}
\int\limits_{\mathbb R^3}\varphi(x,t)|v(x,t)|^2dx+2\int\limits_{0}^t\int\limits_{\mathbb R^3}\int\varphi |\nabla v|^2 dxdt^{'}\leq$$$$\leq
\int\limits_{0}^{t}\int\limits_{\mathbb R^3}|v|^2(\partial_{t}\varphi+\Delta\varphi)+v\cdot\nabla\varphi(|v|^2+2q) dxdt^{'}.
\end{equation}

\end{pro}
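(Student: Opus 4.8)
The plan is to build $v=W+u$ in two stages, following the strategy of \cite{dallas} and the constructions of global weak $L^{3,\infty}$ solutions in \cite{barkerser16}--\cite{barkersersverak16}. First, observe that the hypothesis $4cT^{\frac{\delta_2}{2}}\|U_0\|_{\dot B^{s_{p_2}+\delta_2}_{p_2,p_2}}<1$ is exactly the smallness condition \eqref{smallnessasmpinfinity} needed to apply Proposition~\ref{regularityinfinity} with datum $U_0$; this produces a distributional Navier--Stokes solution $W$ on $Q_T$ obeying \eqref{vintegestinfinity} together with the interior bounds \eqref{vpsmoothinfinity}, so that $W$ and $\pi_{W\otimes W}$ are smooth with all derivatives bounded on $\mathbb R^3\times]\lambda,T[$ for each $\lambda\in]0,T[$, while the $L_{p_2}$-decay of $W$ as $t\to0^+$ places $W$ in a subcritical Serrin class $L_{p_2,r}(Q_T)$ with $\tfrac3{p_2}+\tfrac2r=1$, the integrability margin being furnished precisely by $\delta_2>0$.

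Second, subtracting the equation for $W$ from the Navier--Stokes system for $v=W+u$ and using the identity $v\cdot\nabla v-W\cdot\nabla W=W\cdot\nabla u+u\cdot\nabla W+u\cdot\nabla u$ yields \eqref{perturbdirectsystemBpdallas} with pressure $q=\mathcal R_i\mathcal R_j(u_iu_j+W_iu_j+u_iW_j)$. I would then construct $u$ with initial data $V_0\in L_2$ by a Leray/Calder\'on (retarded mollifier, cf.\ \cite{Calderon90}) or Galerkin approximation $u^{(\varepsilon)}$. The crucial \emph{a priori} estimate comes from testing the perturbed equation with $u^{(\varepsilon)}$: the cubic term $\int u\cdot\nabla u\cdot u$ vanishes, $\int W\cdot\nabla u\cdot u=\tfrac12\int W\cdot\nabla|u|^2=0$ since ${\rm div}\,W=0$, and an integration by parts gives $-\int u\cdot\nabla W\cdot u=\int(u\cdot\nabla u)\cdot W$, which by Lemma~\ref{trilinear} applied with the pair $(p_2,r)$ satisfies
\[
\Big|\int_0^t\!\!\int_{\mathbb R^3}(u\cdot\nabla u)\cdot W\,dx\,dt'\Big|\le C\int_0^t\|W(\cdot,t')\|_{L_{p_2}}^{r}\|u(\cdot,t')\|_{L_2}^2\,dt'+\tfrac12\int_0^t\!\!\int_{\mathbb R^3}|\nabla u|^2\,dx\,dt'.
\]
Since $W\in L_{p_2,r}(Q_T)$, Gronwall's lemma then yields the uniform bound $\sup_{0<t<T}\|u^{(\varepsilon)}(\cdot,t)\|_{L_2}^2+\int_0^T\!\!\int_{\mathbb R^3}|\nabla u^{(\varepsilon)}|^2\le\|V_0\|_{L_2}^2\exp\!\big(C\|W\|_{L_{p_2,r}(Q_T)}^{r}\big)$, hence $u\in L_\infty(0,T;J)\cap L_2(0,T;\stackrel{\circ}J{^1_2})$ and, in the limit, the energy inequality \eqref{venergyineqBpdallas}.

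To pass to the limit $\varepsilon\to0$: the uniform energy bound together with the equation control $\partial_t u^{(\varepsilon)}$ in a space of negative order, so an Aubin--Lions/Arzel\`a--Ascoli argument (combined, near $t=0$, with coercive Stokes estimates as in Lemma~\ref{higherderivativeandpressure}) produces a subsequence converging strongly in $L_2(B(n)\times]0,T[)$ for every $n$ and weakly-$*$ in the energy class; this suffices to pass to the limit in $u\otimes u$, $W\otimes u$, $u\otimes W$, and, by continuity of the Riesz transforms together with the decay of $W$, to recover $q\in L_{3/2,{\rm loc}}(Q_T)$, with moreover $\mathcal R_i\mathcal R_j(W_iu_j+u_iW_j)\in L_2(\mathbb R^3\times]\epsilon,T[)$ for $0<\epsilon<T$. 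Weak continuity \eqref{vweakcontinuityBpdallas} follows from the equation and the energy bound. For the strong initial condition \eqref{initialconditiondallas} I would combine weak continuity at $t=0$, lower semicontinuity of the $L_2$ norm, the energy inequality, and the vanishing as $t\to0^+$ of the right-hand side of \eqref{venergyineqBpdallas} (again by the Serrin-class membership of $W$), upgrading $u(\cdot,t)\rightharpoonup V_0$ to strong $L_2$ convergence. Finally, each approximate $v^{(\varepsilon)}=W+u^{(\varepsilon)}$ is a suitable weak solution and $W$ is smooth on $\mathbb R^3\times]\lambda,T[$, so the local energy inequality \eqref{localenergyinequalityBpdallas} (with the full Navier--Stokes pressure) is inherited in the limit by the usual lower-semicontinuity arguments.

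The main obstacle is the behaviour near $t=0$, where $W$ is only subcritically---not uniformly---integrable: one must verify that every quadratic expression in $W$ and $u$ entering the energy identity, the pressure formula and the local energy inequality remains integrable up to $t=0$, so that the energy inequality holds on the \emph{closed} interval $[0,T]$ and $u$ attains $V_0$ strongly in $L_2$. This is precisely where the gain $\delta_2>0$ built into $\bar u_0^N\in\dot B^{s_{p_2}+\delta_2}_{p_2,p_2}$ via Proposition~\ref{besovinfinityinterpolationtheorem} is indispensable; at the borderline index $s_{p_2}$ the relevant time integrals would merely fail to converge and the argument would break down just as it does for $L^{3,\infty}$ data in the naive approach.
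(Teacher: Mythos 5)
The paper does not prove this proposition itself; it is quoted as a known result with the proof deferred to \cite{dallas}, and your sketch is a faithful reconstruction of that argument (Calder\'on-type splitting: small-data mild solution $W$ from Proposition \ref{regularityinfinity}, then a Leray-type energy solution $u$ for the perturbed system with datum $V_0$), which is also the same scheme the paper uses for the analogous Lemma \ref{energyinequalitysplitting} and Theorem \ref{weak stability}. Your identification of the key point --- that $\delta_2>0$ places $W$ in a subcritical Serrin class $L_{p_2,r}(Q_T)$ with $\frac{3}{p_2}+\frac{2}{r}=1$, so that Lemma \ref{trilinear} plus Gronwall closes the energy estimate and forces the right-hand side of the energy inequality to vanish as $t\to 0^{+}$, giving strong attainment of $V_0$ --- is exactly the mechanism that makes the cited proof work, so no gap to report.
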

\begin{itemize}
\item[] \textbf{Proof of Theorem \ref{Bpsolutionexistence}}
\end{itemize}
\begin{proof}
Let $u_0 \in \dot{B}^{s_p}_{p,\infty}$ be divergence free. 
Let $u_0= \bar u_0^{N}+\widetilde u^{N}_0$ denote the splitting from Proposition \ref{besovinfinityinterpolationtheorem}. In particular, $p<p_2<\infty$, $0<\delta_{2}<-s_{p_2}$, $\gamma_{1}:=\gamma_{1}(p)>0$ and $\gamma_{2}:=\gamma_{2}(p)>0$ are such that   for any $N>0$ there exists  weakly divergence free functions 
$\bar{u}_0^{N}\in \dot{B}^{s_{p_2}+\delta_{2}}_{p_2,p_2}(\mathbb{R}^3)\cap \Bp(\mathbb{R}^3)$ and $\widetilde{u}^{N}_0\in L_2(\mathbb{R}^3)\cap \Bp(\mathbb{R}^3) $ with
 
\begin{equation}\label{Vdecomp1recallrecall}
u_0= \bar{u}_0^{N}+\widetilde{u}_0^{N},
\end{equation}
\begin{equation}\label{barV_1estrecallrecall}
\|\bar{u}_0^{N}\|_{\dot{B}^{s_{p_2}+\delta_{2}}_{p_2,p_2}}\leq N^{\gamma_{1}}C(p, \|u_0\|_{\Bp}),
\end{equation}
\begin{equation}\label{barV_2estrecallrecall}
\|\widetilde{u}_0^{N}\|_{L_2}\leq N^{-\gamma_{2}}C(p,\|u_0\|_{\Bp}).
\end{equation} 
Furthermore, 
\begin{equation}\label{barV_1est.1recallrecall}
\|\bar{u}_0^{N}\|_{\Bp}\leq C(p, \|u_0\|_{\Bp}),
\end{equation}
\begin{equation}\label{barV_2est.1recallrecall}
\|\widetilde{u}_0^{N}\|_{\Bp}\leq C(p, \|u_0\|_{\Bp}).
\end{equation} 
From (\ref{barV_1estrecallrecall}) we see that for every $0<T<\infty$ we can choose \\$N(T,p, \delta_2, p_2,\gamma_1, \|u_0\|_{\dot{B}^{s_p}_{p,\infty}})>0$ such that 
$$4c(p_2)T^{\frac{\delta_2}{2}}\|\bar{u}^{N}_0\|_{\dot{B}^{s_{p_2}+\delta_{2}}_{p_2,p_2}}<1.$$
The proof is now completed by means of Proposition \ref{Dallas}.
\end{proof}


\begin{thebibliography}{99}



\bibitem{dallas}
Albritton, D.  Blow-up criteria for the Navier-Stokes equations in non-endpoint critical Besov spaces. December 2016, arXiv:1612.04439.
\bibitem{bahourichemindanchin}
 Bahouri, H.; Chemin, J.-Y.; Danchin, R. Fourier analysis and nonlinear partial differential equations. Grundlehren der Mathematischen Wissenschaften [Fundamental Principles of Mathematical Sciences], 343. Springer, Heidelberg, 2011. xvi+523 pp. ISBN: 978-3-642-16829-1
\bibitem{barkerser16}
Barker, T.; Seregin, G. On global solutions to the Navier-Stokes system with large $L^{3,\infty}$ initial data. arXiv:1603.03211, March 2016.
\bibitem{barkersersverak16}
Barker, T.; Seregin, G.; \u Sver\'ak, V. On stability of weak Navier-Stokes solutions with large $L^{3,\infty}$ initial data. Submitted
\bibitem{barkerser16blowup}
Barker, T.; Seregin, G. A necessary condition of potential blowup for the Navier-Stokes system in half-space. Math. Ann. (2016). doi:10.1007/s00208-016-1488-9
\bibitem{barker2017} Barker, T. Uniqueness Results for Weak Leray-Hopf Solutions of the Navier-Stokes System with Initial Values in Critical Spaces. J. Math. Fluid Mech. (2017). doi:10.1007/s00021-017-0315-8.
\bibitem{bradshawtsai}
Bradshaw, Z.; Tsai, T.-P. Discretely self-similar solutions to the Navier-Stokes equations with Besov space data. March 2017,
arXiv:1703.03480


\bibitem{Calderon90}
Calder\'on, C. P. Existence of weak solutions for the Navier-Stokes equations with initial data in $L^{p}$. Trans. Amer. Math. Soc. 318 (1990), no. 1, 179–-200. 
\bibitem{cannone1997}
Cannone, M. A generalization of a theorem by Kato on Navier-Stokes equations. Rev. Mat. Iberoamericana  13  (1997),  no. 3, 515–-541.
\bibitem{chenI}
Chen, C.-C.; Strain, R. M.; Yau, H.-T.; Tsai, T.-P. Lower bound on the blow-up rate of the axisymmetric Navier-Stokes equations. Int. Math. Res. Not. IMRN 2008, no. 9, Art. ID rnn016, 31 pp. 
\bibitem{chenII}
 Chen, C.-C.; Strain, R. M.; Tsai, T.-P.; Yau, H.-T. Lower bounds on the blow-up rate of the axisymmetric Navier-Stokes equations. II. Comm. Partial Differential Equations 34 (2009), no. 1-3, 203–-232.
\bibitem{ESS2003}
Escauriaza, L.; Seregin, G.; \u Sver\'ak, V. $L_{3,\infty}$-solutions of Navier-Stokes equations and backward uniqueness. (Russian) Uspekhi Mat. Nauk 58 (2003), no. 2(350), 3--44; translation in Russian Math. Surveys 58 (2003), no. 2, 211–-250.

\bibitem{FujKato1964}

Fujita, H.; Kato, T. On the Navier-Stokes initial value problem. I. Arch. Rational Mech. Anal.  16  1964 269–-315.

 \bibitem{gkp13} Gallagher, I.; Koch, G.; Planchon, F. A profile decomposition approach to the $L^{\infty}_{t}L^{3}_{x}$ Navier-Stokes regularity criterion. Math. Ann. 355 (2013), no. 4, 1527–-1559. 

\bibitem{gkp14} Gallagher, I.; Koch, G.; Planchon, F.
Blow-up of critical Besov norms at a potential Navier-Stokes singularity.  Comm. Math. Phys. 343 (2016), no. 1, 39–-82. 

  \bibitem{GigaMiy1989}
Giga, Y.; Miyakawa, T. Navier-Stokes flow in $\mathbb{R}^3$ with measures as initial vorticity and Morrey spaces. Comm. Partial Differential Equations  14  (1989),  no. 5, 577-–618.













 \bibitem{jiasverak2014}
Jia, H.; \u Sver\' ak, V. Local-in-space estimates near initial time for weak solutions of Navier-Stokes    equations and forward self-similar solutions, Invent. Math. 196 (2014), no.1, 233--265.

\bibitem{jiasverak2015}
Jia, H.; \u Sver\'ak V. Are the incompressible 3d Navier-Stokes equations locally ill-posed in the natural energy space? J. Funct. Anal. 268 (2015), no. 12, 3734–-3766.

\bibitem{Kato1984} Kato, T. Strong $L^p$-solutions of the Navier-Stokes equation in $\textbf{R}^m$, with applications to weak solutions. Math. Z.  187  (1984),  no. 4, 471–-480.




\bibitem{kenigkoch} Kenig, C. E.; Koch, G. An alternative approach to regularity for the Navier-Stokes equations in critical spaces. Ann. Inst. H. Poincar\'e Anal. Non Lin\'aire 28 (2011), no. 2, 159–-187. 

\bibitem{kochliouville}
 Koch, G.; Nadirashvili, N.; Seregin, G.; \v Sver\' ak, V. Liouville theorems for the Navier-Stokes equations and applications. Acta Math. 203 (2009), no. 1, 83–-105. 
\bibitem{KoTa2001}
 Koch, H.; Tataru, D. Well-posedness for the Navier-Stokes equations. Adv. Math.  157  (2001),  no. 1, 22–-35.

 
 \bibitem{KozYam1994}
 Kozono, H.; Yamazaki, M. Semilinear heat equations and the Navier-Stokes equation with distributions in new function spaces as initial data. Comm. Partial Differential Equations  19  (1994),  no. 5-6, 959–-1014.










\bibitem {Le}
J. Leray, \emph{ Sur le mouvement d'un liquide visqueux emplissant
l'espace}, Acta Math. \textbf{63} (1934),  193--248.
\bibitem {LR1}
Lemari\'e-Rieusset, P.G. Recent developments in the Navier-Stokes problem. Chapman\&Hall/CRC Research Notes in Mathematics, 431. Chapman \& Hall/CRC, Boca Raton, FL, 2002. xiv+395 pp.






\bibitem{Plan1996} 
 
Planchon, F. Global strong solutions in Sobolev or Lebesgue spaces to the incompressible Navier-Stokes equations in $\mathbb{R}^3$. Ann. Inst. H. Poincar\'e Anal. Non Lin\'aire  13  (1996),  no. 3, 319–-336.
\bibitem{prodi}
 Prodi, G. Un teorema di unicit\`a per le equazioni di Navier-Stokes. (Italian) Ann. Mat. Pura Appl. (4) 48 1959 173–-182.  
\bibitem{Ser10} Seregin, G. A. Necessary conditions of potential blow up for Navier-Stokes equations. Zap. Nauchn. Sem. S.-Peterburg. Otdel. Mat. Inst. Steklov. (POMI) 385 (2010), Kraevye Zadachi Matematicheskoĭ Fiziki i Smezhnye Voprosy Teorii Funktsiĭ. 41, 187--199, 236; translation in J. Math. Sci. (N.Y.) 178 (2011), no. 3, 345–-352 
\bibitem{Ser12} Seregin, G. A., A certain necessary condition of potential blow up for Navier-Stokes equations. Comm. Math. Phys. 312 (2012), no. 3, 833–-845.


\bibitem{sersve2009}
 Seregin, G.; \u Sver\'ak, V. On type I singularities of the local axi-symmetric solutions of the Navier-Stokes equations. Comm. Partial Differential Equations 34 (2009), no. 1-3, 171–-201.


\bibitem{sersve2016}
Seregin, G.; \u Sver\'ak, V. On global weak solutions to the Cauchy problem for the Navier-Stokes equations with large $L_3$-initial data. 
Nonlinear Analysis (2016), http://dx.doi.org/10.1016/j.na.2016.01.018.

\bibitem{Serrin}
 Serrin, J. The initial value problem for the Navier-Stokes equations. 1963 Nonlinear Problems (Proc. Sympos., Madison, Wis., 1962) pp. 69-–98 Univ. of Wisconsin Press, Madison, Wis.
\bibitem {Sol1973}
Solonnikov, V. A.,\emph{ Estimates of solutions to the non-stationary
Navier-Stokes system}, Zapiski Nauchn. Seminar. LOMI 28(1973),
153--231.


\bibitem{Sol2003UMN}
Solonnikov, V. A. Estimates for solutions of the nonstationary Stokes problem in anisotropic Sobolev spaces and estimates for the resolvent of the Stokes operator. (Russian) Uspekhi Mat. Nauk 58 (2003), no. 2(350), 123--156; translation in Russian Math. Surveys 58 (2003), no. 2, 331-365 



\bibitem{Taylor1992}
Taylor, M. E. Analysis on Morrey spaces and applications to Navier-Stokes and other evolution equations. Comm. Partial Differential Equations  17  (1992),  no. 9-10, 1407–-1456.





\end{thebibliography}
\end{document}